\newcommand{\Ap}[1][]{A_p\, #1}
\newcommand{\abs}[1]{\left|#1\right|}
\newcommand{\Bp}[1][]{B_p\, #1}
\newcommand{\bdry}[1]{\partial #1}
\newcommand{\F}{{\cal F}}
\newcommand{\closure}[1]{\overline{#1}}
\newcommand{\comp}{\circ}
\newcommand{\dint}{\ds{\int}}
\newcommand{\dist}[2]{\text{dist}\, (#1,#2)}
\newcommand{\dnorm}[2][]{\left\|#2\right\|_{#1}^\ast}
\newcommand{\ds}[1]{\displaystyle #1}
\newcommand{\dualp}[3][]{\left(#2,#3\right)_{#1}}
\newcommand{\eps}{\varepsilon}
\newcommand{\half}{\frac{1}{2}}
\newcommand{\hquad}{\hspace{0.08in}}
\newcommand{\loc}{\text{loc}}
\newcommand{\M}{{\cal M}}
\newcommand{\minuszero}{\! \setminus\! \set{0}}
\newcommand{\N}{\mathbb N}
\newcommand{\norm}[2][]{\left\|#2\right\|_{#1}}
\renewcommand{\o}{\text{o}}
\newcommand{\PS}[1]{$(\text{PS})_{#1}$}
\newcommand{\pnorm}[2][]{\if #1'' \left|#2\right|_p \else \left|#2\right|_{#1} \fi}
\newcommand{\QED}{\mbox{\qedhere}}
\newcommand{\R}{\mathbb R}
\newcommand{\restr}[2]{\left.#1\right|_{#2}}
\newcommand{\seq}[1]{\left(#1\right)}
\newcommand{\set}[1]{\left\{#1\right\}}
\newcommand{\vol}[1]{\left|#1\right|}
\newcommand{\Z}{\mathbb Z}
\DeclareMathOperator{\divg}{div}
\DeclareMathOperator{\supp}{supp}
\newenvironment{enumroman}{\begin{enumerate}

}{\end{enumerate}}
\newtheorem{corollary}{Corollary}[section]
\newtheorem{lemma}[corollary]{Lemma}
\newtheorem{theorem}[corollary]{Theorem}
\theoremstyle{definition}
\theoremstyle{remark}
\newtheorem{remark}[corollary]{Remark}
\numberwithin{equation}{section}
\title{\bf A general perturbation theorem with applications to nonhomogeneous critical growth elliptic problems\thanks{{\em MSC2010:} Primary 35J92, Secondary 35B33, 35R11
\newline \indent\; {\em Key Words and Phrases:} nonhomogeneous critical growth elliptic problems, pairs of nontrivial solutions, $p$-Laplacian problems, critical Hardy-Sobolev exponents, fractional $p$-Laplacian problems, $(p,q)$-Laplacian problems}}
\author{\bf Kanishka Perera\\
Department of Mathematical Sciences\\
Florida Institute of Technology\\
Melbourne, FL 32901, USA\\
\em kperera@fit.edu}
\date{}
\begin{document}

\maketitle

\begin{abstract}
We prove a general perturbation theorem that can be used to obtain pairs of nontrivial solutions of a wide range of local and nonlocal nonhomogeneous elliptic problems. Applications to critical $p$-Laplacian problems, $p$-Laplacian problems with critical Hardy-Sobolev exponents, critical fractional $p$-Laplacian problems, and critical $(p,q)$-Laplacian problems are given. Our results are new even in the semilinear case $p = 2$.
\end{abstract}

\newpage

\section{Introduction}

In the pioneering paper \cite{MR1168304}, Tarantello showed that the problem
\begin{equation} \label{0.1}
\left\{\begin{aligned}
- \Delta u & = |u|^{2^\ast - 2}\, u + h(x) && \text{in } \Omega\\[10pt]
u & = 0 && \text{on } \bdry{\Omega},
\end{aligned}\right.
\end{equation}
where $\Omega$ is a bounded domain in $\R^N$, $N \ge 3$, and $2^\ast = 2N/(N - 2)$ is the critical Sobolev exponent, has two nontrivial solutions if $h \in H^{-1} \minuszero$ satisfies
\[
\int_\Omega hu\, dx < \frac{4}{N - 2} \left(\frac{N - 2}{N + 2}\right)^{(N+2)/4} \pnorm[2]{\nabla u}^{(N+2)/2}
\]
for all $u \in H^1_0(\Omega)$ with $\pnorm[2^\ast]{u} = 1$, where $\pnorm[p]{\cdot}$ denotes the norm in $L^p(\Omega)$. In particular, problem \eqref{0.1} has two nontrivial solutions for all $h \in L^{2N/(N+2)}(\Omega) \minuszero$ with $\pnorm[2N/(N+2)]{h}$ sufficiently small. In \cite{MR1408672}, Cao and Zhou extended this result to the problem
\begin{equation} \label{0.2}
\left\{\begin{aligned}
- \Delta u & = \lambda u + |u|^{2^\ast - 2}\, u + h(x) && \text{in } \Omega\\[10pt]
u & = 0 && \text{on } \bdry{\Omega}
\end{aligned}\right.
\end{equation}
for $0 < \lambda < \lambda_1$, where $\lambda_1 > 0$ is the first Dirichlet eigenvalue of $- \Delta$ in $\Omega$. There is now a large literature generalizing these results (see, e.g., \cite{MR4122508,MR1966255,MR1975059,MR3250758,MR2231068,MR2488689,MR3649469} and their references). However, the question of whether problem \eqref{0.2} still has two nontrivial solutions for all $h \in L^{2N/(N+2)}(\Omega) \minuszero$ with $\pnorm[2N/(N+2)]{h}$ sufficiently small when $\lambda \ge \lambda_1$ has remained open over the years. In the present paper we show that this is indeed the case when $N = 4$ and $\lambda > \lambda_1$ is not an eigenvalue, and when $N \ge 5$ and $\lambda \ge \lambda_1$. More specifically, we have the following theorem.

\begin{theorem} \label{Theorem 0.1}
There exists $\mu_0 > 0$ such that problem \eqref{0.2} has two nontrivial solutions for all $h \in L^{2N/(N+2)}(\Omega) \minuszero$ with $\pnorm[2N/(N+2)]{h} < \mu_0$ in each of the following cases:
\begin{enumroman}
\item $N = 4$ and $\lambda > 0$ is not an eigenvalue,
\item $N \ge 5$ and $\lambda > 0$.
\end{enumroman}
\end{theorem}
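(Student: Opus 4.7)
The plan is to recast \eqref{0.2} variationally and then apply the general perturbation theorem proved earlier in the paper. Define
\[
I_h(u) = \half \int_\Omega \left(\abs{\nabla u}^2 - \lambda u^2\right) dx - \frac{1}{2^\ast}\int_\Omega \abs{u}^{2^\ast} dx - \int_\Omega hu\, dx
\]
on $H^1_0(\Omega)$, so that critical points of $I_h$ are precisely the weak solutions of \eqref{0.2}. Splitting $I_h = I_0 + F_h$ with $F_h(u) = -\int_\Omega hu\, dx$, the Sobolev embedding yields $\dnorm{F_h} \le C\, \pnorm[2N/(N+2)]{h}$, so $F_h$ is as small a perturbation of $I_0$ as we wish upon taking $\mu_0$ small.

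The abstract theorem asks for a min-max (linking) structure on $I_0$ together with the strict energy bound $c^\ast < \frac{1}{N}\, S^{N/2}$, this value being the Palais-Smale threshold for the critical Sobolev functional. I would realize the linking in the style of Capozzi-Fortunato-Palmieri: decompose $H^1_0(\Omega) = V \oplus W$, where $V$ is the span of all Dirichlet eigenfunctions of $-\Delta$ with eigenvalue $\le \lambda$ and $W$ is its $H^1_0$-orthogonal complement. Since $\lambda \notin \sigma(-\Delta)$ in case (i) and $\lambda$ is an arbitrary positive number in case (ii), the splitting is unambiguous; the quadratic form $u \mapsto \int(\abs{\nabla u}^2 - \lambda u^2)$ is then negative definite on $V$ and uniformly positive definite on $W$, and one links a small sphere in $W$ against the box $\set{v + t U_\eps : v \in V,\ \norm{v} \le R,\ 0 \le t \le R}$, where $U_\eps$ is a truncated Talenti instanton concentrated at an interior point of $\Omega$.

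The crucial step is the energy estimate. After optimizing in $v \in V$, the standard expansions give the supremum of $I_0$ over the box as $\frac{1}{N}\, S^{N/2} + O(\eps^{N-2}) - \frac{\lambda}{2} \int U_\eps^2 + \o(\eps^2)$, and $\int U_\eps^2$ is of order $\eps^2$ when $N \ge 5$ and of order $\eps^2\, \abs{\log \eps}$ when $N = 4$. In case (ii) the $\eps^2$ gain defeats the $\eps^{N-2}$ loss for every $\lambda > 0$ and all sufficiently small $\eps$; in case (i) both loss and gain are of order $\eps^2$, and the logarithmic enhancement wins exactly because $\dist{\lambda}{\sigma(-\Delta)} > 0$. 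With $c^\ast < \frac{1}{N}\, S^{N/2}$ secured, the abstract theorem produces two distinct nontrivial critical points of $I_h$: a small one generated by the local structure of $I_0$ at the origin and a large one perturbed from the linking level $c^\ast$. The principal obstacle I expect is this case (i) estimate, where loss of non-resonance would destroy the cancellation of the logarithmic gain against the orthogonality constraint $v \in V$; the remaining technical points --- verification of $(PS)_c$ below threshold, nontriviality of the small solution, and separation of the two critical levels --- follow from the standard Struwe profile decomposition for critical Sobolev sequences and from the smallness of $F_h$.
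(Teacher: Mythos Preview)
Your route is viable for $p=2$ but differs from the paper's, and it has a gap at resonance. The paper deduces Theorem~\ref{Theorem 0.1} as the case $p=2$, $\mu=0$ of Theorem~\ref{Theorem 1.6}, and the linking there is \emph{not} the Capozzi--Fortunato--Palmieri splitting $H^1_0=V\oplus W$. Instead one takes a compact symmetric set $C_0\subset\Psi^{\lambda_k}$ of cohomological index $k$, cuts it off on a small ball $B_\rho(0)$ (Lemma~\ref{Lemma 2.3}) to obtain $C_\delta$, and concentrates the truncated Talenti instanton $w_{\eps,\rho}$ inside $B_{\rho/2}(0)$, so that every $v\in C_\delta$ and $w_{\eps,\rho}$ have \emph{disjoint supports}. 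This forces the exact splitting $E_0(tv+\tau w_{\eps,\rho})=E_0(tv)+E_0(\tau w_{\eps,\rho})$ and reduces the threshold estimate to the two separate computations of Lemmas~\ref{Lemma 2.5} and~\ref{Lemma 2.5.1}. Your eigenspace $V$ is of course only available for $p=2$; the paper's construction is written for general $p$ and then specialized.

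The concrete gap is in case~(ii) at resonance. You claim the quadratic form is negative definite on $V$; when $\lambda=\lambda_k$ it is only negative \emph{semi}definite, with kernel the eigenspace $E_k$. Optimizing over $v''\in E_k$ is then controlled only by the $\abs{v''+tU_\eps}^{2^\ast}$ term, and the cross terms there are of order $\norm[\infty]{v''}\,t^{2^\ast-1}\!\int U_\eps^{2^\ast-1}\sim\eps^{(N-2)/2}$, which for $N=5,6$ is \emph{larger} than the gain $\lambda\int U_\eps^2\sim\eps^2$. So the expansion you quote does not yet hold at resonance, and the assertion ``the $\eps^2$ gain defeats the $\eps^{N-2}$ loss for every $\lambda>0$'' is unjustified as written. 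The paper's disjoint-support device sidesteps this completely: at resonance the $v$-contribution is isolated as $\sup_{t\ge0}E_0(tv)\le c_6\,\rho^{N(N-2)/2}$ (Lemma~\ref{Lemma 2.5}, case $\lambda=\lambda_k$), and one then chooses $\rho=\eps^\alpha$ and balances exponents, arriving at the condition $N(N-4)>4$, i.e.\ $N\ge5$. If you wish to keep the eigenspace linking you must either reproduce this cut-off (at which point you have essentially rejoined the paper's proof) or supply the finer interaction estimates at resonance; the original CFP argument you invoke does not cover that case.
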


We will in fact prove the corresponding result for the $p$-Laplacian. Consider the problem
\begin{equation} \label{1.1}
\left\{\begin{aligned}
- \Delta_p\, u & = \lambda\, |u|^{p-2}\, u + |u|^{p^\ast - 2}\, u + h(x) && \text{in } \Omega\\[10pt]
u & = 0 && \text{on } \bdry{\Omega},
\end{aligned}\right.
\end{equation}
where $\Omega$ is a bounded domain in $\R^N$, $N \ge 2$, $\Delta_p\, u = \divg (|\nabla u|^{p-2}\, \nabla u)$ is the $p$-Laplacian of $u$, $1 < p < N$, $p^\ast = Np/(N - p)$ is the critical Sobolev exponent, $\lambda > 0$, $h \in L^{{p^\ast}'}(\Omega) \minuszero$, and ${p^\ast}' = p^\ast/(p^\ast - 1)$ is the H\"{o}lder conjugate of $p^\ast$. We have the following theorem.

\begin{theorem} \label{Theorem 1.1}
There exists $\mu_0 > 0$ such that problem \eqref{1.1} has two nontrivial solutions for all $h \in L^{{p^\ast}'}(\Omega) \minuszero$ with $\pnorm[{p^\ast}']{h} < \mu_0$ in each of the following cases:
\begin{enumroman}
\item $N \ge p^2$ and $\lambda > 0$ is not a Dirichlet eigenvalue of $- \Delta_p$ in $\Omega$,
\item $N^2/(N + 1) > p^2$ and $\lambda > 0$.
\end{enumroman}
\end{theorem}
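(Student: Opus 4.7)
The plan is to apply the general perturbation theorem of this paper to the energy functional
\[
J(u) = \frac{1}{p} \int_\Omega \abs{\nabla u}^p\, dx - \frac{\lambda}{p} \int_\Omega \abs{u}^p\, dx - \frac{1}{p^\ast} \int_\Omega \abs{u}^{p^\ast}\, dx - \int_\Omega h u\, dx
\]
on $W^{1,p}_0(\Omega)$, whose critical points coincide with the weak solutions of \eqref{1.1}, with unperturbed functional $J_0$ corresponding to $h = 0$. The key compactness fact is the standard one: both $J$ and $J_0$ satisfy the $(\text{PS})_c$ condition at every level $c < c^\ast := \frac{1}{N}\, S^{N/p}$, where $S$ is the best constant in the embedding $W^{1,p}_0(\R^N) \incl L^{p^\ast}(\R^N)$. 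Keeping the relevant min--max value strictly below $c^\ast$ is the single nontrivial point of the whole proof.

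First I would verify that $J_0$ has the linking structure demanded by the perturbation theorem, using the $\Z_2$-cohomological index associated with the Dirichlet eigenvalue problem $- \Delta_p\, u = \mu\, \abs{u}^{p-2}\, u$ in $\Omega$. When $\lambda$ is not an eigenvalue (case (i)), the sublevel and superlevel sets of the Rayleigh quotient $R(u) = \int \abs{\nabla u}^p / \int \abs{u}^p$ at level $\lambda$ split the $L^p$-sphere of $W^{1,p}_0(\Omega)$ into two closed sets of complementary cohomological index, and the standard linking between them is available. When $\lambda > 0$ is arbitrary (case (ii)), one works directly with the closed sets $\set{R \le \lambda}$ and $\set{R \ge \lambda}$, which may meet along the eigenspace at $\lambda$; a linking nevertheless exists through the difference of their cohomological indices, at the cost of a stronger dimension restriction later on. In either case one obtains a candidate min--max value $c$ for $J_0$, and the general perturbation theorem will upgrade it to two nontrivial critical points of $J$ as soon as $0 < c < c^\ast$ and $\pnorm[{p^\ast}']{h}$ is sufficiently small.

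The crucial technical step is therefore the estimate $c < c^\ast$. I would establish it by a Brezis--Nirenberg style test-function computation: insert translates
\[
U_{\eps, y}(x) = \eps^{-(N-p)/p}\, U\!\left(\frac{x-y}{\eps}\right)
\]
of the Talenti extremal $U$, truncated into $W^{1,p}_0(\Omega)$, into the linking set and expand $J_0$ as $\eps \to 0$. The leading correction to $c^\ast$ comes from the negative term $-\frac{\lambda}{p}\int_\Omega \abs{U_{\eps,y}}^p\, dx$, whose asymptotic scaling depends sharply on whether $N > p^2$, $N = p^2$, or $N < p^2$. The dimension hypotheses $N \ge p^2$ in (i) and $N^2/(N+1) > p^2$ in (ii) are exactly what is needed for this negative correction to dominate the higher-order positive errors arising from the $p^\ast$ part of $J_0$ (and, in case (ii), from the absence of a clean spectral splitting at $\lambda$), yielding the required strict inequality.

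The main obstacle I foresee is this last estimate in case (ii), where the dimension restriction is more delicate and the linking is built from index differences rather than a clean complementary splitting. Pushing the min--max value below $c^\ast$ uniformly in $\lambda$, and in particular when $\lambda$ sits at or just past a Dirichlet eigenvalue, will require the sharpest form of the test-function asymptotics, likely combining $U_{\eps,y}$ with a suitable eigenfunction in a tuned way and optimizing the concentration point $y \in \Omega$. Once $c < c^\ast$ is secured in both regimes, the conclusion --- two distinct nontrivial solutions of \eqref{1.1} whenever $\pnorm[{p^\ast}']{h} < \mu_0$ --- follows from the general perturbation theorem without further incident.
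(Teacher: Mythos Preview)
Your outline is sound in its architecture---apply the abstract perturbation theorem, verify the \PS{c} condition below the threshold $\frac{1}{N}\,S^{N/p}$, build the linking from the cohomological-index eigenvalue sequence, and push the min--max value below threshold via Talenti test functions---and this is indeed the paper's route. But the obstacle you flag at the end is real, and the ideas you gesture toward (optimizing the concentration point $y$, combining $U_{\eps,y}$ with an eigenfunction ``in a tuned way'') are not how it is resolved.

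The missing ingredient is a \emph{disjoint-support} construction. The paper takes the compact symmetric set $C_0 \subset \Psi^{\lambda_k}$ of index $k$ furnished by Degiovanni--Lancelotti (which is bounded in $L^\infty(\Omega) \cap C^1_\loc(\Omega)$), and for each $u \in C_0$ multiplies by a cutoff that kills it inside a small ball $B_\rho(0)$; the resulting set $C_\delta$ still has index $k$ and lies in $\Psi^{\lambda_k + c_1\rho^{N-p}}$. The truncated Talenti bump $w_{\eps,\rho}$ is then supported \emph{inside} $\closure{B_{\rho/2}(0)}$, so every $v \in C_\delta$ and $w_{\eps,\rho}$ have disjoint supports and the energy splits exactly:
\[
E_0(tv + \tau w_{\eps,\rho}) = E_0(tv) + E_0(\tau w_{\eps,\rho}).
\]
This decoupling is what makes the estimate tractable: the first term is $\le 0$ once $\rho$ is small enough that $\lambda_k + c_1\rho^{N-p} \le \lambda$ (case~(i), $\lambda$ not an eigenvalue), and is $\le c_6\,\rho^{N(N-p)/p}$ when $\lambda = \lambda_k$ (case~(ii)); the second term is handled by the standard Brezis--Nirenberg expansion. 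In case~(ii) one then couples the two scales by setting $\rho = \eps^\alpha$ and chooses $\alpha$ so that the positive error $\rho^{N(N-p)/p}$ is dominated by the negative correction of order $\eps^{p-1}$ while the cutoff error in the gradient term stays subordinate as well; the condition for such an $\alpha$ to exist is precisely $N(N - p^2) > p^2$, equivalently $N^2/(N+1) > p^2$. The distinction between the two cases is therefore not about how the linking is built---the same linking serves both---but about whether the contribution from the $C_\delta$-part can be made exactly nonpositive or only small of a controlled order.
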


\begin{remark}
When $h = 0$, one nontrivial solution of problem \eqref{1.1} was obtained by Garc{\'{\i}}a Azorero and Peral Alonso \cite{MR912211}, Egnell \cite{MR956567}, Guedda and V{\'e}ron \cite{MR1009077}, Arioli and Gazzola \cite{MR1741848}, and Degiovanni and Lancelotti \cite{MR2514055}.
\end{remark}

We will first prove a general perturbation theorem that can be used to obtain pairs of nontrivial solutions of a wide range of local and nonlocal nonhomogeneous elliptic problems (see Theorem \ref{Theorem 1.5}). We will then apply this result to prove Theorem \ref{Theorem 1.1} for a more general class of critical $p$-Laplacian problems (see Theorem \ref{Theorem 1.6}). We also present applications of our general result to $p$-Laplacian problems with critical Hardy-Sobolev exponents (Theorem \ref{Theorem 1.7}), critical fractional $p$-Laplacian problems (Theorem \ref{Theorem 1.9}), and critical $(p,q)$-Laplacian problems (Theorem \ref{Theorem 1.10}).

Proof of Theorem \ref{Theorem 1.5} makes use of a certain linking structure associated with a sequence of eigenvalues introduced by the author in \cite{MR1998432}. This sequence is defined using the $\Z_2$-cohomological index of Fadell and Rabinowitz (see \cite{MR0478189}). We point out that the standard sequence of eigenvalues defined using the genus does not provide this linking structure and therefore cannot be used to prove Theorem \ref{Theorem 1.5}.

\section{Statement of results}

\subsection{A general perturbation theorem}

Let $(W,\norm{\, \cdot\,})$ be a uniformly convex Banach space with dual $(W^\ast,\dnorm{\, \cdot\,})$ and duality pairing $\dualp{\cdot}{\cdot}$. Recall that $f \in C(W,W^\ast)$ is a potential operator if there is a functional $F \in C^1(W,\R)$, called a potential for $f$, such that $F' = f$. We consider the nonlinear operator equation
\begin{equation} \label{1.3}
\Ap[u] = \lambda \Bp[u] + f(u) + \mu\, g(u) + h
\end{equation}
in $W^\ast$, where $\Ap, \Bp, f, g \in C(W,W^\ast)$ are potential operators satisfying the following assumptions, $\lambda > 0$ and $\mu \in \R$ are parameters, and $h \in W^\ast \minuszero$:
\begin{enumerate}
\item[$(A_1)$] $\Ap$ is $(p - 1)$-homogeneous and odd for some $p \in (1,\infty)$: $\Ap[(tu)] = |t|^{p-2}\, t\, \Ap[u]$ for all $u \in W$ and $t \in \R$,
\item[$(A_2)$] $\dualp{\Ap[u]}{v} \le \norm{u}^{p-1} \norm{v}$ for all $u, v \in W$, and equality holds if and only if $\alpha u = \beta v$ for some $\alpha, \beta \ge 0$, not both zero (in particular, $\dualp{\Ap[u]}{u} = \norm{u}^p$ for all $u \in W$),
\item[$(B_1)$] $\Bp$ is $(p - 1)$-homogeneous and odd: $\Bp[(tu)] = |t|^{p-2}\, t\, \Bp[u]$ for all $u \in W$ and $t \in \R$,
\item[$(B_2)$] $\dualp{\Bp[u]}{u} > 0$ for all $u \in W \minuszero$, and $\dualp{\Bp[u]}{v} \le \dualp{\Bp[u]}{u}^{(p-1)/p} \dualp{\Bp[v]}{v}^{1/p}$ for all $u, v \in W$,
\item[$(B_3)$] $\Bp$ is a compact operator,
\item[$(F_1)$] the potential $F$ of $f$ with $F(0) = 0$ satisfies $F(u) = \o(\norm{u}^p)$ as $u \to 0$,
\item[$(F_2)$] $F(u) \ge 0$ for all $u \in W$,
\item[$(F_3)$] $F$ is bounded on bounded subsets of $W$,
\item[$(G)$] the potential $G$ of $g$ with $G(0) = 0$ is bounded on bounded subsets of $W$.
\end{enumerate}
Solutions of equation \eqref{1.3} coincide with critical points of the $C^1$-functional
\begin{equation} \label{1.4}
E(u) = I_p(u) - \lambda J_p(u) - F(u) - \mu\, G(u) - \dualp{h}{u}, \quad u \in W,
\end{equation}
where
\begin{equation} \label{1.5}
I_p(u) = \frac{1}{p} \dualp{\Ap[u]}{u}, \qquad J_p(u) = \frac{1}{p} \dualp{\Bp[u]}{u}
\end{equation}
are the potentials of $\Ap$ and $\Bp$ satisfying $I_p(0) = 0$ and $J_p(0) = 0$, respectively (see Perera \cite[Proposition 3.1]{MR4293883}). The nonlinear eigenvalue problem
\begin{equation} \label{1.6}
\Ap[u] = \lambda \Bp[u]
\end{equation}
will play a role in our result. Let $\M = \set{u \in W : I_p(u) = 1}$. Then $\M \subset W \minuszero$ is a bounded complete symmetric $C^1$-Finsler manifold radially homeomorphic to the unit sphere in $W$, and eigenvalues of problem \eqref{1.6} coincide with critical values of the $C^1$-functional
\[
\Psi(u) = \frac{1}{J_p(u)}, \quad u \in \M.
\]
Denote by $\F$ the class of symmetric subsets of $\M$ and by $i(M)$ the $\Z_2$-cohomological index of $M \in \F$ (see Fadell and Rabinowitz \cite{MR0478189}), let $\F_k = \set{M \in \F : i(M) \ge k}$, and set
\[
\lambda_k := \inf_{M \in \F_k}\, \sup_{u \in M}\, \Psi(u), \quad k \in \N.
\]
Then $\lambda_1 > 0$ is the first eigenvalue and $\lambda_1 \le \lambda_2 \le \cdots$ is an unbounded sequence of eigenvalues. Moreover, denoting by $\Psi^a = \set{u \in \M : \Psi(u) \le a}$ (resp. $\Psi_a = \set{u \in \M : \Psi(u) \ge a}$) the sublevel (resp. superlevel) sets of $\Psi$, if $\lambda_k < \lambda_{k+1}$, then
\begin{equation} \label{1.7}
i(\Psi^{\lambda_k}) = i(\M \setminus \Psi_{\lambda_{k+1}}) = k
\end{equation}
and $\Psi^{\lambda_k}$ has a compact symmetric subset of index $k$ (see Perera et al.\! \cite[Theorem 4.6]{MR2640827} and Perera \cite[Theorem 1.3]{MR4293883}).

We assume that there is a threshold level $c_{\mu,\,h}^\ast > 0$ such that $E$ satisfies the \PS{c} condition at all levels $c < c_{\mu,\,h}^\ast$. Set
\begin{equation} \label{1.8}
c^\ast = \liminf_{\mu,\, \dnorm{h} \to 0}\, c_{\mu,\,h}^\ast
\end{equation}
and
\begin{equation} \label{1.9}
E_0(u) = I_p(u) - \lambda J_p(u) - F(u), \quad u \in W.
\end{equation}
Let $\pi_\M : W \minuszero \to \M,\, u \mapsto u/I_p(u)^{1/p}$ be the radial projection onto $\M$. We will prove the following theorem.

\begin{theorem} \label{Theorem 1.5}
Let $\lambda_k \le \lambda < \lambda_{k+1}$. Assume that there exist $R > 0$ and, for all sufficiently small $\delta > 0$, a compact symmetric subset $C_\delta$ of $\Psi^{\lambda + \delta}$ with $i(C_\delta) = k$ and $w_\delta \in \M \setminus C_\delta$ such that, setting $A_\delta = \set{\pi_\M((1 - \tau)\, v + \tau w_\delta) : v \in C_\delta,\, 0 \le \tau \le 1}$, we have
\begin{equation} \label{1.10}
\sup_{u \in A_\delta}\, E_0(Ru) \le 0
\end{equation}
and
\begin{equation} \label{1.11}
\sup_{u \in A_\delta,\, 0 \le t \le R}\, E_0(tu) < c^\ast.
\end{equation}
Then $\exists\, \mu_0 > 0$ such that equation \eqref{1.3} has two nontrivial solutions $u_1$ and $u_2$ satisfying
\begin{equation} \label{1.12}
E(u_1) < E(u_2), \qquad 0 < E(u_2) < c_{\mu,\,h}^\ast
\end{equation}
for all $\mu \in \R$ and $h \in W^\ast \minuszero$ with $|\mu| + \dnorm{h} < \mu_0$.
\end{theorem}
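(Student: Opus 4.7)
\emph{Proof proposal.}
The plan is to extract the two nontrivial critical points from the linking structure associated with the solid cone
\[
Q_\delta := \set{tu : u \in A_\delta,\ 0 \le t \le R} \subset W,
\]
which sits at $E_0 \le 0$ at the vertex $0$ and on its outer face $\set{Ru : u \in A_\delta}$ by \eqref{1.10}, and which stays strictly below $c^\ast$ throughout by \eqref{1.11}. A mountain--pass--type minimax over $Q_\delta$ will yield a critical point $u_2$ at a positive level bounded above by $c_{\mu,h}^\ast$, and an Ekeland minimization exploiting $h \ne 0$ will yield a lower--energy critical point $u_1$.

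For $u_2$ I would first carry out a perturbation bookkeeping: since $Q_\delta$ is bounded, $(G)$ and the dual norm give $\sup_{Q_\delta} \abs{E - E_0} \le |\mu|\sup_{Q_\delta}|G| + C\dnorm{h} \to 0$ as $(\mu,h) \to 0$, so \eqref{1.8} and \eqref{1.11} upgrade, for $|\mu|+\dnorm{h}$ small enough, to $\sup_{Q_\delta} E < c_{\mu,h}^\ast$. Using $i(C_\delta) = k$ together with \eqref{1.7}, I would then show that $Q_\delta$ links with a suitable symmetric set
\[
S_{\rho,\delta} := \set{\rho v : v \in N_\delta}, \qquad N_\delta \subset \M, \qquad \restr{\Psi}{N_\delta} > \lambda + \delta/2,
\]
in the sense that every $\gamma \in \Gamma := \set{\gamma \in C(Q_\delta, W) : \restr{\gamma}{\set{0}\, \cup\, R\, A_\delta} = \mathrm{id}}$ meets $S_{\rho,\delta}$, for an appropriate $\rho = \rho(\delta) > 0$. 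On $S_{\rho,\delta}$, $J_p(v) \le 1/(\lambda+\delta/2)$, so $F(u) = \o(\norm{u}^p)$ combined with the bookkeeping gives $\inf_{S_{\rho,\delta}} E \ge \beta(\delta) > 0$ uniformly in small $(\mu,h)$. Hence $c_2 := \inf_{\gamma \in \Gamma}\sup_{w \in \gamma(Q_\delta)} E(w)$ satisfies $0 < \beta(\delta) \le c_2 \le \sup_{Q_\delta} E < c_{\mu,h}^\ast$, and the quantitative deformation lemma together with \PS{c_2} yields a nontrivial critical point $u_2$ with $E(u_2) = c_2$.

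For $u_1$ I would exploit $h \ne 0$: choosing $v_0 \in W$ with $\dualp{h}{v_0} > 0$, $(F_1)$ and $(G)$ give $E(tv_0) < 0$ for small $t > 0$. I would then construct a bounded closed star--shaped set $K \subset W$, on which $E$ is bounded below by $(F_3)$, $(G)$ and the boundedness of $K$, whose boundary carries $E \ge 0$, and with $\inf_K E < 0$. Ekeland's variational principle on $K$ produces a Palais--Smale sequence at $m := \inf_K E < 0$, and since $m < c_{\mu,h}^\ast$, \PS{m} delivers a critical point $u_1$ with $E(u_1) = m < 0 < E(u_2)$, giving \eqref{1.12} and $u_1 \ne u_2$.

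The main obstacle is the linking step: because $A_\delta$ is a one--sided join of the symmetric set $C_\delta$ with the vertex $w_\delta$ and hence not symmetric, the Fadell--Rabinowitz index cannot be applied to $A_\delta$ or $Q_\delta$ directly. The argument has to be routed through $C_\delta$: one uses \eqref{1.7} to force every deformation of $C_\delta$ in $\M$ to cross $\Psi_{\lambda+\delta/2}$ (this is where $i(C_\delta)=k$ is used sharply, and $C_\delta \subset \Psi^{\lambda+\delta}$ --- rather than $\Psi^{\lambda_k}$ --- is needed to leave room for the perturbation), and then lifts this crossing to a linking in $W$ via the radial projection $\pi_\M$ and the join--with--vertex structure of $A_\delta$. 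A secondary difficulty is constructing $K$ in the last step when $\lambda \ge \lambda_1$, where the indefiniteness of $I_p - \lambda J_p$ forces $K$ to be oriented along $v_0$ (essentially the direction of $-h$) so that the low--eigendirections of $I_p - \lambda J_p$ do not spoil $\restr{E}{\bdry{K}} \ge 0$.
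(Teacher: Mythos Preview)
Your overall split---one mountain-pass/linking argument for $u_2$ and a local minimization for $u_1$---departs from the paper's route, which obtains both critical points in one stroke by invoking an abstract two-critical-point theorem (Theorem~2.1 in the paper, the case $r=0$ of \cite[Theorem~1.1]{MR4293883}). There the data are $A_0=C_\delta$, $B_0=\Psi_{\lambda_{k+1}}$, $w_0=w_\delta$; the index identity \eqref{1.7} matches $i(A_0)$ with $i(\M\setminus B_0)$, and the two levels are separated not via $E(u_1)<0$ but via
\[
\inf_{B^\ast} E \le E(u_1) \le \sup_A E < \inf_B E \le E(u_2),
\]
where $B^\ast=\{tw:w\in B_0,\,0\le t\le\rho\}$ and $A$ is the relative boundary of the cone $Q_\delta$. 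The key quantitative step is to choose $\delta$ small enough that $\sup_{v\in C_\delta,\,0\le t\le R}E_0(tv)\le \delta R^p/(\lambda+\delta)<\inf_B E_0$, after which the perturbation bookkeeping you describe transfers everything to $E$.

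Your plan for $u_2$ is essentially the same linking, and you correctly isolate the real difficulty (the join $A_\delta$ is not symmetric, so the index cannot be read off $A_\delta$ directly). The paper hides this inside the cited abstract theorem; your proposed detour through $C_\delta$ and $\pi_\M$ is the right idea but would need to be fleshed out to the level of that theorem.

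The genuine gap is in your construction of $u_1$. You aim for a bounded star-shaped $K$ with $\inf_K E<0\le \restr{E}{\partial K}$ and then run Ekeland. But for $\lambda\ge\lambda_1$ this set does not exist in general: take any $v\in\Psi^{\lambda_1}$ (e.g.\ a first eigendirection). Then $I_p(tv)-\lambda J_p(tv)=t^p(1-\lambda/\Psi(v))\le 0$ for all $t$, and since $F\ge 0$ one has $E_0(tv)\le 0$ along the whole ray, hence $E(tv)<0$ for small $(\mu,h)$ on an unbounded set accumulating at $0$. Any bounded star-shaped $K$ containing a point with $E<0$ and enclosing a neighborhood of it will meet this negative ray on its boundary, destroying $\restr{E}{\partial K}\ge 0$. ``Orienting $K$ along $v_0$'' does not help: the indefinite directions of $I_p-\lambda J_p$ are present in every neighborhood, and you have no control on $E'$ near $t_0 v_0$ that would let you build a one-sided barrier. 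This is precisely why the paper's argument produces $u_1$ from the linking with $B^\ast$ (the cone over the \emph{high}-eigenvalue set $\Psi_{\lambda_{k+1}}$) rather than from a local minimization; note that the theorem only asserts $E(u_1)<E(u_2)$, not $E(u_1)<0$.
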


Proof of this theorem and those of its applications that follow are given in Section \ref{Proofs}.

\subsection{Critical $p$-Laplacian problems}

Consider the critical $p$-Laplacian problem
\begin{equation} \label{1.13}
\left\{\begin{aligned}
- \Delta_p\, u & = \lambda\, |u|^{p-2}\, u + \mu\, |u|^{q-2}\, u + |u|^{p^\ast - 2}\, u + h(x) && \text{in } \Omega\\[10pt]
u & = 0 && \text{on } \bdry{\Omega},
\end{aligned}\right.
\end{equation}
where $\Omega$ is a bounded domain in $\R^N$, $1 < p < N$, $1 < q < p^\ast$, $p^\ast = Np/(N - p)$ is the critical Sobolev exponent, $\lambda > 0$, $\mu \in \R$, $h \in L^{{p^\ast}'}(\Omega) \minuszero$, and ${p^\ast}' = p^\ast/(p^\ast - 1)$ is the H\"{o}lder conjugate of $p^\ast$. Let
\begin{equation} \label{1.14}
E(u) = \int_\Omega \left(\frac{1}{p}\, |\nabla u|^p - \frac{\lambda}{p}\, |u|^p - \frac{\mu}{q}\, |u|^q - \frac{1}{p^\ast}\, |u|^{p^\ast} - h(x)\, u\right) dx, \quad u \in W^{1,\,p}_0(\Omega)
\end{equation}
be the associated variational functional and let
\begin{equation} \label{1.15}
S_{N,\,p} = \inf_{u \in W^{1,\,p}_0(\Omega) \setminus \set{0}}\, \frac{\dint_\Omega |\nabla u|^p\, dx}{\left(\dint_\Omega |u|^{p^\ast} dx\right)^{p/p^\ast}}
\end{equation}
be the best Sobolev constant. We have the following theorem.

\begin{theorem} \label{Theorem 1.6}
There exists $\mu_0 > 0$ such that problem \eqref{1.13} has two nontrivial solutions $u_1$ and $u_2$ satisfying
\[
E(u_1) < E(u_2), \qquad 0 < E(u_2) < \frac{1}{N}\, S_{N,\,p}^{N/p}
\]
for all $\mu \in \R$ and $h \in L^{{p^\ast}'}(\Omega) \minuszero$ with $|\mu| + \pnorm[{p^\ast}']{h} < \mu_0$ in each of the following cases:
\begin{enumroman}
\item $N \ge p^2$ and $\lambda > 0$ is not a Dirichlet eigenvalue of $- \Delta_p$ in $\Omega$,
\item $N\, (N - p^2) > p^2$ and $\lambda > 0$.
\end{enumroman}
\end{theorem}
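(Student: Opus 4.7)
The plan is to apply Theorem \ref{Theorem 1.5} on $W = W^{1,p}_0(\Omega)$ equipped with the gradient norm, taking $\Ap[u] = -\Delta_p\, u$, $\Bp[u] = |u|^{p-2}\, u$, $f(u) = |u|^{p^\ast-2}\, u$, and $g(u) = |u|^{q-2}\, u$, so that $I_p(u) = \frac{1}{p}\, \pnorm[p]{\nabla u}^p$, $J_p(u) = \frac{1}{p}\, \pnorm[p]{u}^p$, $F(u) = \frac{1}{p^\ast}\pnorm[p^\ast]{u}^{p^\ast}$, and $G(u) = \frac{1}{q}\pnorm[q]{u}^q$. I would first verify the structural hypotheses: $(A_1)$–$(A_2)$ are classical for $-\Delta_p$ (with $(A_2)$ coming from H\"older); $(B_1)$–$(B_3)$ use the $(p-1)$-homogeneity of $\Bp$ and compactness of $W^{1,p}_0(\Omega) \hookrightarrow L^p(\Omega)$; $(F_1)$–$(F_3)$ follow from $p < p^\ast$; and $(G)$ from $q < p^\ast$. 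A standard concentration-compactness analysis of Palais–Smale sequences for $E$ places the compactness threshold at $c^\ast = \frac{1}{N}\, S_{N,\,p}^{N/p}$, continuous in the perturbation parameters.

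Fix $k$ with $\lambda_k \le \lambda < \lambda_{k+1}$, and let $C_\delta \subset \Psi^{\lambda + \delta}$ be a compact symmetric set of index $k$ guaranteed by \eqref{1.7}. For $w_\delta$ I would use the radial projection to $\M$ of a truncated, translated, rescaled Aubin–Talenti extremal $u_\eps$ concentrated at a point $x_0 \in \Omega$; the point $x_0$ can be arbitrary in case (i) but in case (ii) must be a common zero of the eigenfunctions that populate $C_\delta$. Condition \eqref{1.10} is immediate, since $E_0(Ru) \to -\infty$ as $R \to \infty$ and $\pnorm[p^\ast]{u}$ is bounded below on the compact set $A_\delta$.

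The decisive ingredient is \eqref{1.11}. A one-variable maximization for fixed $u \in \M$ yields
\[
\max_{t \ge 0}\, E_0(tu) \;=\; \frac{1}{N}\left(\frac{\pnorm[p]{\nabla u}^p - \lambda\, \pnorm[p]{u}^p}{\pnorm[p^\ast]{u}^p}\right)_+^{\!N/p},
\]
so \eqref{1.11} reduces to the pointwise inequality
\[
\pnorm[p]{\nabla u}^p - \lambda\, \pnorm[p]{u}^p \;<\; S_{N,\,p}\, \pnorm[p^\ast]{u}^p \qquad \text{for all } u \in A_\delta.
\]
On $v \in C_\delta$ this holds with slack: $v \in \Psi^{\lambda + \delta}$ forces $\pnorm[p]{\nabla v}^p - \lambda \pnorm[p]{v}^p \le p\, \delta/(\lambda + \delta) \to 0$, while $\pnorm[p^\ast]{v}$ is bounded below by compactness of $C_\delta$. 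On the remainder of $A_\delta$ the estimate rests on the Br\'ezis–Nirenberg expansions
\[
\pnorm[p]{\nabla u_\eps}^p = S_{N,\,p}^{N/p} + O(\eps^{(N-p)/(p-1)}), \quad \pnorm[p^\ast]{u_\eps}^{p^\ast} = S_{N,\,p}^{N/p} + O(\eps^{N/(p-1)}),
\]
together with the classical asymptotics $\pnorm[p]{u_\eps}^p \sim \eps^p$ if $N > p^2$, $\sim \eps^p\, |\log\eps|$ if $N = p^2$, and $\sim \eps^{(N-p)/(p-1)}$ if $N < p^2$ (see \cite{MR912211,MR1009077,MR2514055}).

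The main obstacle is then the quantitative comparison: $\lambda\, \pnorm[p]{u_\eps}^p$ must strictly dominate the Sobolev defect $\pnorm[p]{\nabla u_\eps}^p - S_{N,\,p}\, \pnorm[p^\ast]{u_\eps}^p = O(\eps^{(N-p)/(p-1)})$ uniformly along the link, after absorbing the mixed terms generated by $(1-\tau)v + \tau u_\eps$. In case (i), non-resonance allows $C_\delta$ to be chosen so that $v$ and $u_\eps$ effectively decouple, and the comparison succeeds exactly when $N \ge p^2$ (strictly for $N > p^2$, logarithmically for $N = p^2$). In case (ii), where $\lambda$ may coincide with $\lambda_k$ and $C_\delta$ must contain the resonant eigenspace, decoupling fails; centering $u_\eps$ at a common zero of the relevant eigenfunctions and expanding the cross-terms then produces higher-order interaction contributions whose control forces the strictly stronger dimensional condition $N(N-p^2) > p^2$. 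With \eqref{1.11} in hand, Theorem \ref{Theorem 1.5} delivers $u_1, u_2$ with the announced energy ordering and bound.
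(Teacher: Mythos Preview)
Your setup and the identification of the compactness threshold are fine, but the handling of the linking set $A_\delta$ has a genuine gap. The ``effective decoupling'' in case (i) and the ``common zero of the eigenfunctions that populate $C_\delta$'' in case (ii) are not viable mechanisms. For $p \ne 2$ there is no linear eigenspace structure, so $C_\delta$ is not built out of eigenfunctions and the notion of a common zero is ill-defined; even for $p = 2$ there is no reason such a zero should exist. Without disjoint supports, the cross terms in $|\nabla((1-\tau)v + \tau u_\eps)|^p$ along the link are not controlled by any simple expansion when $p \ne 2$, and your sketch gives no argument for them.

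The paper's actual device is different and is the key missing idea: one \emph{engineers} exact decoupling by first modifying $C_0$. Using the Degiovanni--Lancelotti result that $\Psi^{\lambda_k}$ contains a compact symmetric set $C_0$ of index $k$ bounded in $L^\infty(\Omega) \cap C^1_{\loc}(\Omega)$, one multiplies every $u \in C_0$ by a cutoff $\eta(|x|/\rho)$ vanishing on $B_{3\rho/4}(0)$ and projects back to $\M$. The $C^1$ bound guarantees this costs only $O(\rho^{N-p})$ in $\Psi$, so the resulting set $C$ still has index $k$ and lies in $\Psi^{\lambda_k + c_1 \rho^{N-p}}$. The truncated Aubin--Talenti profile $w_{\eps,\rho}$ is then supported in $\closure{B_{\rho/2}(0)}$, so every $v \in C$ and $w_{\eps,\rho}$ have \emph{disjoint} supports and $E_0(tv + \tau w_{\eps,\rho}) = E_0(tv) + E_0(\tau w_{\eps,\rho})$ exactly. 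In case (i) one takes $\rho$ small enough that $\lambda_k + c_1 \rho^{N-p} \le \lambda$, making the first summand nonpositive; in case (ii) with $\lambda = \lambda_k$ the first summand is $O(\rho^{N(N-p)/p})$, and balancing $\rho = \eps^\alpha$ against the gradient defect $O(\eps^{(N-p)/p}\rho^{-(N-p)/(p-1)})$ and the gain $\lambda \int w_{\eps,\rho}^p \ge c\,\eps^{p-1}$ is what produces the condition $N(N - p^2) > p^2$. Your proposed route never isolates these two scales and so cannot recover the dimensional threshold.
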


We note that Theorem \ref{Theorem 1.6} allows the full subcritical range $1 < q < p^\ast$ for $q$ and makes no assumptions on the sign of $\mu$.

\begin{remark}
Theorem \ref{Theorem 1.1} is the special case $\mu = 0$ of Theorem \ref{Theorem 1.6}.
\end{remark}

\subsection{$p$-Laplacian problems with critical Hardy-Sobolev exponents}

Consider the problem
\begin{equation} \label{1.16}
\left\{\begin{aligned}
- \Delta_p\, u & = \lambda\, |u|^{p-2}\, u + \frac{|u|^{p^\ast(\sigma) - 2}}{|x|^\sigma}\, u + h(x) && \text{in } \Omega\\[10pt]
u & = 0 && \text{on } \bdry{\Omega},
\end{aligned}\right.
\end{equation}
where $\Omega$ is a bounded domain in $\R^N$ containing the origin, $1 < p < N$, $0 < \sigma < p$, $p^\ast(\sigma) = (N - \sigma)\, p/(N - p)$ is the critical Hardy-Sobolev exponent, $\lambda > 0$, $h \in L^{p^\ast(\sigma)'}(\Omega) \minuszero$, and $p^\ast(\sigma)' = p^\ast(\sigma)/(p^\ast(\sigma) - 1)$ is the H\"{o}lder conjugate of $p^\ast(\sigma)$. Let
\begin{equation} \label{1.17}
E(u) = \int_\Omega \left(\frac{1}{p}\, |\nabla u|^p - \frac{\lambda}{p}\, |u|^p - \frac{1}{p^\ast(\sigma)}\, \frac{|u|^{p^\ast(\sigma)}}{|x|^\sigma} - h(x)\, u\right) dx, \quad u \in W^{1,\,p}_0(\Omega)
\end{equation}
be the associated variational functional and let
\begin{equation} \label{1.18}
S_{N,\,p,\,\sigma} = \inf_{u \in W^{1,\,p}_0(\Omega) \setminus \set{0}}\, \frac{\dint_\Omega |\nabla u|^p\, dx}{\left(\dint_\Omega \frac{|u|^{p^\ast(\sigma)}}{|x|^\sigma}\, dx\right)^{p/p^\ast(\sigma)}}
\end{equation}
be the best constant in the Hardy-Sobolev inequality. We have the following theorem.

\begin{theorem} \label{Theorem 1.7}
There exists $\mu_0 > 0$ such that problem \eqref{1.16} has two nontrivial solutions $u_1$ and $u_2$ satisfying
\[
E(u_1) < E(u_2), \qquad 0 < E(u_2) < \frac{p - \sigma}{(N - \sigma)\, p}\, S_{N,\,p,\,\sigma}^{(N - \sigma)/(p - \sigma)}
\]
for all $h \in L^{p^\ast(\sigma)'}(\Omega) \minuszero$ with $\pnorm[p^\ast(\sigma)']{h} < \mu_0$ in each of the following cases:
\begin{enumroman}
\item $N \ge p^2$ and $\lambda > 0$ is not a Dirichlet eigenvalue of $- \Delta_p$ in $\Omega$,
\item $(N - \sigma)(N - p^2) > (p - \sigma)\, p$ and $\lambda > 0$.
\end{enumroman}
\end{theorem}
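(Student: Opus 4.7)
The plan is to apply Theorem \ref{Theorem 1.5} with $W = W^{1,p}_0(\Omega)$, $\|u\| = \pnorm[p]{\nabla u}$, and
\[
\dualp{\Ap[u]}{v} = \int_\Omega |\nabla u|^{p-2}\nabla u\cdot\nabla v\, dx,\qquad \dualp{\Bp[u]}{v} = \int_\Omega |u|^{p-2}uv\, dx,
\]
so that $I_p(u) = \frac{1}{p}\int_\Omega|\nabla u|^p\, dx$ and $J_p(u) = \frac{1}{p}\int_\Omega|u|^p\, dx$; I take $F(u) = \frac{1}{p^\ast(\sigma)}\int_\Omega |u|^{p^\ast(\sigma)}/|x|^\sigma\, dx$, $f = F'$, and, since \eqref{1.16} carries no subcritical perturbation, $g \equiv 0$ and $\mu = 0$ in \eqref{1.3}. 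Assumptions $(A_1)$--$(A_2)$ are the standard convexity properties of $-\Delta_p$, $(B_1)$--$(B_2)$ follow from H\"older's inequality, $(B_3)$ from the compact embedding $W^{1,p}_0(\Omega)\incl L^p(\Omega)$, and $(F_1)$--$(F_3)$ from the Hardy-Sobolev inequality together with $p^\ast(\sigma) > p$. The resulting $\lambda_k$ then coincide with the cohomological-index Dirichlet eigenvalues of $-\Delta_p$ from \cite{MR2640827}. A standard concentration-compactness analysis for the Hardy-Sobolev term shows that a nonconvergent \PS{c} sequence for $E$ must absorb at least one bubble concentrating at the origin, each of energy $\frac{p-\sigma}{(N-\sigma)p}S_{N,p,\sigma}^{(N-\sigma)/(p-\sigma)}$; consequently \PS{c} holds for all $c < c^\ast_{\mu,h}$ with $c^\ast_{\mu,h}\to c^\ast := \frac{p-\sigma}{(N-\sigma)p}S_{N,p,\sigma}^{(N-\sigma)/(p-\sigma)}$ as $\pnorm[p^\ast(\sigma)']{h}\to 0$.

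For the linking data I would fix the compact symmetric index-$k$ subset $C\subset\Psi^{\lambda_k}$ produced by \eqref{1.7}; since $\lambda\ge\lambda_k$, $C\subset\Psi^{\lambda+\delta}$ for every $\delta > 0$, so I set $C_\delta := C$. For $w_\delta$ I take the $\M$-projection of a truncated Hardy-Sobolev extremal
\[
U_\eps(x) = \frac{c_{N,p,\sigma}\,\eps^{(N-p)/(p(p-\sigma))}}{\bigl(\eps + |x|^{(p-\sigma)/(p-1)}\bigr)^{(N-p)/(p-\sigma)}}\,\varphi(x),
\]
where $\varphi$ is a smooth cutoff supported near the origin in $\Omega$ and $\eps = \eps(\delta)\to 0$ is fixed last. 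Condition \eqref{1.10} is immediate: $F$ is $p^\ast(\sigma)$-homogeneous with $\inf_{A_\delta}F > 0$ by compactness, while $I_p - \lambda J_p$ is $p$-homogeneous and bounded on the compact set $A_\delta\subset\M$, so $E_0(Ru)\to-\infty$ uniformly on $A_\delta$ as $R\to\infty$. For \eqref{1.11}, the homogeneities of $I_p$, $J_p$, $F$ allow me to maximize $E_0(tu)$ in $t$ explicitly; an elementary computation reduces \eqref{1.11} to
\[
\sup_{u\in A_\delta}\, \frac{p - \lambda\pnorm[p]{u}^p}{\bigl(\int_\Omega |u|^{p^\ast(\sigma)}/|x|^\sigma\, dx\bigr)^{p/p^\ast(\sigma)}} < S_{N,p,\sigma}.
\]
On $C$ this is elementary, since $u\in\Psi^{\lambda_k}\cap\M$ forces $\pnorm[p]{u}^p \ge p/\lambda_k$, giving numerator $\le p(1-\lambda/\lambda_k)\le 0$.

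The main obstacle will be carrying this strict inequality through the segment joining $C$ to $w_\delta = \pi_\M(U_\eps)$. Sharp estimates for $U_\eps$ yield, with explicit exponents $\alpha,\beta > 0$ depending on $N$, $p$, $\sigma$,
\[
\frac{\int_\Omega |\nabla U_\eps|^p\, dx}{\bigl(\int_\Omega |U_\eps|^{p^\ast(\sigma)}/|x|^\sigma\, dx\bigr)^{p/p^\ast(\sigma)}} = S_{N,p,\sigma} + O(\eps^\alpha),\qquad \frac{\int_\Omega |U_\eps|^p\, dx}{\bigl(\int_\Omega |U_\eps|^{p^\ast(\sigma)}/|x|^\sigma\, dx\bigr)^{p/p^\ast(\sigma)}} \asymp \eps^\beta,
\]
and the dimensional condition $N\ge p^2$ in case (i), respectively $(N-\sigma)(N-p^2) > (p-\sigma)p$ in case (ii), is precisely what ensures $\beta < \alpha$, so that the correction $-\lambda\pnorm[p]{U_\eps}^p$ strictly dominates the positive error $O(\eps^\alpha)$ for every $\eps$ small. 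Case (ii) is the more delicate regime, since $\lambda$ may be an eigenvalue $\lambda_k$ and then the numerator vanishes at leading order on $C$, forcing the entire strict gain to come from $w_\delta$; this is where the stronger dimensional hypothesis is needed. The cross terms $\int_\Omega |(1-\tau)v + \tau U_\eps|^{p^\ast(\sigma)}/|x|^\sigma\, dx$ along the segment are controlled by separation of effective supports: any $v\in C$ is uniformly bounded in $L^\infty$ near $0$ while $U_\eps$ concentrates at the origin, so the segment estimate reduces, up to $o(1)$ errors as $\eps\to 0$, to its two endpoints. Once these bubble estimates are established, Theorem \ref{Theorem 1.5} yields two nontrivial solutions $u_1,u_2$ of \eqref{1.16} with $E(u_1) < E(u_2)$ and $0 < E(u_2) < c^\ast_{\mu,h}$, and shrinking $\mu_0$ finally gives the energy bound stated in Theorem \ref{Theorem 1.7}.
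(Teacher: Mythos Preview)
Your overall strategy—applying Theorem \ref{Theorem 1.5} with the same choices of $W$, $\Ap$, $\Bp$, $f$, $g=0$ and the Hardy--Sobolev threshold level—is exactly the paper's. The genuine gap is in the construction of the linking set and the segment estimate.

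You take $C_\delta = C$ to be the compact index-$k$ subset of $\Psi^{\lambda_k}$ \emph{as is}, and then argue that along the segment $\pi_\M((1-\tau)v+\tau w_\delta)$ the cross terms are ``controlled by separation of effective supports'' and the estimate ``reduces, up to $o(1)$ errors as $\eps\to 0$, to its two endpoints''. This is where the argument is not a proof. The Dirichlet integral $\int_\Omega|\nabla(tv+\tau U_\eps)|^p\,dx$ is nonlinear in its argument; cross terms of the type $\int |\nabla v|^{p-1}|\nabla U_\eps|$ and $\int |\nabla v||\nabla U_\eps|^{p-1}$ have no reason to be $o(\eps^{(p-1)p/(p-\sigma)})$, which is the size of the gain from the $-\lambda|U_\eps|_p^p$ term that you need to preserve. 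You also have not established that functions in $C$ are uniformly bounded in $L^\infty$ (let alone $C^1$) near $0$; this requires the Degiovanni--Lancelotti regularity result the paper invokes, and without it the phrase ``separation of effective supports'' has no content.

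The paper's key device, which you are missing, is to \emph{modify} $C_0$ first: multiply every $u\in C_0$ by a cutoff $\eta(|x|/\rho)$ that vanishes on $B_{3\rho/4}(0)$, project back to $\M$, and call the result $C$. One then has $C\subset\Psi^{\lambda_k+c_1\rho^{N-p}}$ (a small loss), but now every $v\in C$ has support in $\Omega\setminus B_{3\rho/4}(0)$, while $w_\delta$ is chosen with support in $\closure{B_{\rho/2}(0)}$. The supports are \emph{exactly} disjoint, so
\[
E_0(tv+\tau w_{\eps,\rho}) = E_0(tv) + E_0(\tau w_{\eps,\rho})
\]
holds as an identity, and the segment supremum splits into two independent one-parameter problems with no cross terms at all.

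This modification is not merely a convenience; in case (ii) it is where the dimensional hypothesis enters. When $\lambda=\lambda_k$, the cutoff costs $\sup_{v\in C_\delta,\,t\ge 0}E_0(tv)\le c\,\rho^{(N-\sigma)(N-p)/(p-\sigma)}$, and the paper balances this against the bubble by setting $\rho=\eps^\alpha$; the existence of a suitable $\alpha>0$ is \emph{equivalent} to $(N-\sigma)(N-p^2)>(p-\sigma)p$. Your scheme, with $C$ fixed independently of any cutoff radius $\rho$, has no such parameter to balance, so it is unclear how the case-(ii) hypothesis would ever come into play in your argument.
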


\begin{remark}
When $h = 0$, one nontrivial solution was obtained by Ghoussoub and Yuan \cite{MR1695021} and Perera and Zou \cite{MR3810517}.
\end{remark}

\begin{remark}
Theorem \ref{Theorem 1.1} is the special case $\sigma = 0$ of Theorem \ref{Theorem 1.7}.
\end{remark}

\subsection{Critical fractional $p$-Laplacian problems}

Consider the critical fractional $p$-Laplacian problem
\begin{equation} \label{1.19}
\left\{\begin{aligned}
(- \Delta)_p^s\, u & = \lambda\, |u|^{p-2}\, u + |u|^{p_s^\ast - 2}\, u + h(x) && \text{in } \Omega\\[10pt]
u & = 0 && \text{in } \R^N \setminus \Omega,
\end{aligned}\right.
\end{equation}
where $\Omega$ is a bounded domain in $\R^N$ with Lipschitz boundary, $(- \Delta)_p^s$ is the fractional $p$-Laplacian operator defined on smooth functions by
\[
(- \Delta)_p^s\, u(x) = 2 \lim_{\eps \searrow 0} \int_{\R^N \setminus B_\eps(x)} \frac{|u(x) - u(y)|^{p-2}\, (u(x) - u(y))}{|x - y|^{N+sp}}\, dy, \quad x \in \R^N,
\]
$s \in (0,1)$, $1 < p < N/s$, $p_s^\ast = Np/(N - sp)$ is the fractional critical Sobolev exponent, $\lambda > 0$, $h \in L^{{p_s^\ast}'}(\Omega) \minuszero$, and ${p_s^\ast}' = p_s^\ast/(p_s^\ast - 1)$ is the H\"{o}lder conjugate of $p_s^\ast$. Let
\[
[u]_{s,\,p} = \left(\int_{\R^{2N}} \frac{|u(x) - u(y)|^p}{|x - y|^{N+sp}}\, dx dy\right)^{1/p}
\]
be the Gagliardo seminorm of a measurable function $u : \R^N \to \R$ and let
\[
W^{s,\,p}(\R^N) = \set{u \in L^p(\R^N) : [u]_{s,\,p} < \infty}
\]
be the fractional Sobolev space endowed with the norm
\[
\norm[s,\,p]{u} = \left(\pnorm{u}^p + [u]_{s,\,p}^p\right)^{1/p}.
\]
We work in the closed linear subspace
\[
W^{s,\,p}_0(\Omega) = \set{u \in W^{s,\,p}(\R^N) : u = 0 \text{ a.e.\! in } \R^N \setminus \Omega},
\]
equivalently renormed by setting $\norm{\cdot} = [\cdot]_{s,\,p}$. Let
\begin{multline} \label{1.20}
E(u) = \frac{1}{p} \int_{\R^{2N}} \frac{|u(x) - u(y)|^p}{|x - y|^{N+sp}}\, dx dy - \int_\Omega \left(\frac{\lambda}{p}\, |u|^p + \frac{1}{p_s^\ast}\, |u|^{p_s^\ast} + h(x)\, u\right) dx,\\[10pt]
u \in W^{s,\,p}_0(\Omega)
\end{multline}
be the associated variational functional and let
\begin{equation} \label{1.21}
S_{N,\,p,\,s} = \inf_{u \in \dot{W}^{s,\,p}(\R^N) \setminus \set{0}}\, \frac{\dint_{\R^{2N}} \frac{|u(x) - u(y)|^p}{|x - y|^{N+sp}}\, dx dy}{\left(\dint_{\R^N} |u|^{p_s^\ast}\, dx\right)^{p/p_s^\ast}}
\end{equation}
be the best fractional Sobolev constant, where
\[
\dot{W}^{s,\,p}(\R^N) = \set{u \in L^{p_s^\ast}(\R^N) : [u]_{s,\,p} < \infty}
\]
endowed with the norm $\norm{\cdot}$. We have the following theorem.

\begin{theorem} \label{Theorem 1.9}
There exists $\mu_0 > 0$ such that problem \eqref{1.19} has two nontrivial solutions $u_1$ and $u_2$ satisfying
\[
E(u_1) < E(u_2), \qquad 0 < E(u_2) < \frac{s}{N}\, S_{N,\,p,\,s}^{N/sp}
\]
for all $h \in L^{{p_s^\ast}'}(\Omega) \minuszero$ with $\pnorm[{p_s^\ast}']{h} < \mu_0$ in each of the following cases:
\begin{enumroman}
\item $N \ge sp^2$ and $\lambda > 0$ is not a Dirichlet eigenvalue of $(- \Delta)_p^s$ in $\Omega$,
\item $N\, (N - sp^2) > s^2 p^2$ and $\lambda > 0$.
\end{enumroman}
\end{theorem}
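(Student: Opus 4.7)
The plan is to apply Theorem \ref{Theorem 1.5} with $\mu = 0$ and $g \equiv 0$, taking $W = W^{s,\,p}_0(\Omega)$ with its uniformly convex norm $\norm{u} = [u]_{s,\,p}$. The operators $\Ap$, $\Bp$, $f$ are the Gâteaux derivatives of $\frac{1}{p}\, [\,\cdot\,]_{s,\,p}^p$, $\frac{1}{p}\, \pnorm[p]{\cdot}^p$, and $\frac{1}{p_s^\ast}\, \pnorm[p_s^\ast]{\cdot}^{p_s^\ast}$, respectively. Hypotheses $(A_1)$--$(A_2)$, $(B_1)$--$(B_3)$, $(F_1)$--$(F_3)$, $(G)$ follow from $(p-1)$-homogeneity and oddness, Hölder's inequality on $\R^{2N}$ (for $(A_2)$) and on $\Omega$ (for $(B_2)$), the compact embedding $W^{s,\,p}_0(\Omega) \incl L^p(\Omega)$ (for $(B_3)$), and the continuous embedding $W^{s,\,p}_0(\Omega) \incl L^{p_s^\ast}(\Omega)$ (for $(F_1)$ and $(F_3)$).

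Next I would identify the threshold level. A nonlocal concentration-compactness analysis of Brezis--Nirenberg type shows that $E$ satisfies \PS{c} for every $c$ strictly below a level converging to $\frac{s}{N}\, S_{N,\,p,\,s}^{N/sp}$ as $\dnorm{h} \to 0$, so $c^\ast = \frac{s}{N}\, S_{N,\,p,\,s}^{N/sp}$. For the linking data, property \eqref{1.7} furnishes a compact symmetric $C_\delta \subset \Psi^{\lambda_k}$ with $i(C_\delta) = k$: in case (i) the non-eigenvalue hypothesis places $\lambda$ strictly between $\lambda_k$ and $\lambda_{k+1}$, and in case (ii) a small-$\delta$ limiting argument handles $\lambda = \lambda_k$. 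The direction $w_\delta$ is taken to be the radial projection onto $\M$ of $\eta\, U_\eps$, where $\eta$ is a cutoff localized near an interior point of $\Omega$ and $U_\eps$ is the Talenti-type extremal family for $S_{N,\,p,\,s}$; $\eps$ is chosen small enough that $\eta\, U_\eps$ is essentially supported away from the elements of $C_\delta$.

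The main obstacle is the sharp energy estimate \eqref{1.11}. On the $C_\delta$-end ($\tau = 0$), the bound $\Psi(v) \le \lambda_k \le \lambda$ combined with $(F_2)$ yields $E_0(tv) \le -(t^{p_s^\ast}/p_s^\ast)\, \pnorm[p_s^\ast]{v}^{p_s^\ast} \le 0$. On the $w_\delta$-end ($\tau = 1$) the Brasco--Mosconi--Squassina asymptotics give $[\eta\, U_\eps]_{s,\,p}^p \le S_{N,\,p,\,s}\, \pnorm[p_s^\ast]{\eta\, U_\eps}^p + O(\eps^{N - sp})$ together with $\pnorm[p]{\eta\, U_\eps}^p \gtrsim \eps^{sp}$ (with a logarithmic correction at the borderline $N = sp^2$), so after maximizing $\frac{t^p}{p}\bigl([\eta U_\eps]_{s,\,p}^p - \lambda\, \pnorm[p]{\eta U_\eps}^p\bigr) - \frac{t^{p_s^\ast}}{p_s^\ast}\, \pnorm[p_s^\ast]{\eta U_\eps}^{p_s^\ast}$ in $t \ge 0$ the value drops strictly below $c^\ast$ exactly when the negative $\lambda$-term dominates the $O(\eps^{N-sp})$ remainder. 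This dominance holds under $N \ge sp^2$ in case (i) and requires the stronger inequality $N(N - sp^2) > s^2 p^2$ in case (ii), which is precisely what is needed to also absorb the eigenvalue-deficit contribution coming from the intermediate $\tau \in (0,1)$ along the path $\pi_\M((1 - \tau)v + \tau\, w_\delta)$; the cross terms in that path are controlled by the essential disjointness of the $L^p$-supports of $v$ and $w_\delta$ together with $(B_2)$. Once \eqref{1.10}--\eqref{1.11} are verified, Theorem \ref{Theorem 1.5} delivers the two nontrivial solutions with the asserted energy bounds.
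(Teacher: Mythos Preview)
Your overall strategy---apply Theorem \ref{Theorem 1.5} with $g=0$, use truncated fractional Sobolev extremals for $w_\delta$, and verify \eqref{1.10}--\eqref{1.11}---matches the paper's. But there is a genuine gap in your treatment of the ``cross terms''. You write that these are controlled by ``the essential disjointness of the $L^p$-supports of $v$ and $w_\delta$ together with $(B_2)$''. In the nonlocal setting this is false: even when $v$ and $w$ have disjoint supports, the Gagliardo seminorm does \emph{not} split, because the double integral over $\R^{2N}$ sees the interaction between $\supp v$ and $\supp w$. Property $(B_2)$ concerns $B_p$ (the $L^p$ pairing), not $A_p$, and is irrelevant here. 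The paper handles this via a substantial estimate (its Lemma 3.15) showing
\[
E_0(tv + \tau w_{\eps,\rho/2\theta}) \le \bigl[E_0(tv) + c\,\rho^{N-sp} t^p\bigr] + \bigl[E_0(\tau w_{\eps,\rho/2\theta}) + c\,(\eps/\rho)^{(N-sp)/(p-1)} \tau^p\bigr],
\]
obtained by expanding $|tv(x)-\tau w(y)|^p$ on the off-diagonal region $B_{3\rho/4}^c \times B_{\rho/2}$ via inequalities of the form $|a+b|^p \le |a|^p + |b|^p + C_p(|a|^{p-1}|b|+|a||b|^{p-1})$, then bounding the mixed integrals $\int |v(x)|^{p-q} w(y)^q / |x-y|^{N+sp}\,dx\,dy$ using the $L^\infty$ bound on $C_\delta$ and the decay of $U$. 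These extra $\rho^{N-sp}$ and $(\eps/\rho)^{(N-sp)/(p-1)}$ terms must then be balanced against the $\lambda\,\eps^{sp}$ gain by choosing $\rho=\eps^\alpha$; in case (ii) this is exactly where the condition $N(N-sp^2)>s^2p^2$ enters, not merely to ``absorb an eigenvalue deficit'' as you suggest.

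Two further points. First, you take $C_\delta$ directly inside $\Psi^{\lambda_k}$ and hope that choosing $\eps$ small makes $\eta\,U_\eps$ ``essentially supported away'' from it; but the support of $\eta\,U_\eps$ is fixed by $\eta$, not by $\eps$, and elements of the original $C_0$ need not vanish near the concentration point. The paper first truncates every $u\in C_0$ by $\eta(|x|/\rho)$ to force supports into $\Omega\setminus B_{3\rho/4}(0)$, using the $L^\infty$ boundedness of $C_0$ (from Mosconi et al.) to control the resulting shift in $\Psi$ by $c\rho^{N-sp}$. Second, your remainder $O(\eps^{N-sp})$ for $[\eta\,U_\eps]_{s,p}^p$ is not the correct order; after normalization the error is $O\bigl((\eps/\rho)^{(N-sp)/(p-1)}\bigr)$, and the comparison $sp$ versus $(N-sp)/(p-1)$ is what produces the threshold $N\ge sp^2$ in case (i).
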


\begin{remark}
When $h = 0$, one nontrivial solution was obtained in Mosconi et al.\! \cite{MR3530213} except when $N = sp^2$ and $\lambda > \lambda_1$ is not an eigenvalue in $(i)$, where $\lambda_1 > 0$ is the first eigenvalue.
\end{remark}

Theorem \ref{Theorem 1.9} is new even in the semilinear case $p = 2$ when $\lambda \ge \lambda_1$, which we state as the following corollary.

\begin{corollary}
There exists $\mu_0 > 0$ such that the problem
\[
\left\{\begin{aligned}
(- \Delta)^s u & = \lambda u + |u|^{2_s^\ast - 2}\, u + h(x) && \text{in } \Omega\\[10pt]
u & = 0 && \text{in } \R^N \setminus \Omega
\end{aligned}\right.
\]
has two nontrivial solutions for all $h \in L^{2N/(N+2s)}(\Omega) \minuszero$ with $\pnorm[2N/(N+2s)]{h} < \mu_0$ in each of the following cases:
\begin{enumroman}
\item $N \ge 4s$ and $\lambda > 0$ is not a Dirichlet eigenvalue of $(- \Delta)^s$ in $\Omega$,
\item $N\, (N - 4s) > 4s^2$ and $\lambda > 0$.
\end{enumroman}
\end{corollary}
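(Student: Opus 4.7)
The plan is to observe that this corollary is precisely the $p = 2$ instance of Theorem \ref{Theorem 1.9} and to verify that each ingredient specializes correctly.

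With $p = 2$ the kernel $|u(x) - u(y)|^{p-2}(u(x) - u(y))$ in the definition of $(-\Delta)_p^s$ reduces to $u(x) - u(y)$, so the fractional $p$-Laplacian becomes (a positive multiple of) the standard fractional Laplacian $(-\Delta)^s$, and the energy \eqref{1.20} becomes
\[
\tfrac{1}{2} \int_{\R^{2N}} \frac{|u(x) - u(y)|^2}{|x - y|^{N+2s}}\, dx\, dy - \int_\Omega \left(\tfrac{\lambda}{2}\, u^2 + \tfrac{1}{2_s^\ast}\, |u|^{2_s^\ast} + h(x)\, u\right) dx
\]
on $W^{s,\,2}_0(\Omega)$, which is the variational functional of the problem in the corollary.

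Next I would match the exponents and dimensional conditions: $p_s^\ast = Np/(N - sp)$ becomes $2_s^\ast = 2N/(N - 2s)$, the H\"older conjugate ${p_s^\ast}'$ becomes $2N/(N + 2s)$, the condition $N \ge sp^2$ becomes $N \ge 4s$, and $N(N - sp^2) > s^2 p^2$ becomes $N(N - 4s) > 4s^2$. Hence the hypotheses of cases $(i)$ and $(ii)$ of Theorem \ref{Theorem 1.9} coincide with those of the corollary, and invoking that theorem produces $\mu_0 > 0$ together with two nontrivial solutions whenever $\pnorm[2N/(N+2s)]{h} < \mu_0$.

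There is no real obstacle here: the corollary is a direct specialization of Theorem \ref{Theorem 1.9}, recorded separately only because, as the paper notes, even this semilinear case is new when $\lambda \ge \lambda_1$, and because the stronger energy ordering and the bound $E(u_2) < (s/N)\, S_{N,\,2,\,s}^{N/(2s)}$ provided by Theorem \ref{Theorem 1.9} are suppressed in favor of a bare-existence statement.
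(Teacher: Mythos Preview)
Your proposal is correct and matches the paper's treatment exactly: the paper provides no separate proof, introducing the corollary explicitly as the semilinear case $p = 2$ of Theorem \ref{Theorem 1.9}, and your verification of the specializations ($p_s^\ast \to 2_s^\ast$, ${p_s^\ast}' \to 2N/(N+2s)$, $N \ge sp^2 \to N \ge 4s$, $N(N - sp^2) > s^2 p^2 \to N(N - 4s) > 4s^2$) is accurate.
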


\subsection{Critical $(p,q)$-Laplacian problems}

Consider the critical $(p,q)$-Laplacian problem
\begin{equation} \label{1.22}
\left\{\begin{aligned}
- \Delta_p\, u - \mu\, \Delta_q\, u & = \lambda\, |u|^{p-2}\, u + |u|^{p^\ast - 2}\, u + h(x) && \text{in } \Omega\\[10pt]
u & = 0 && \text{on } \bdry{\Omega},
\end{aligned}\right.
\end{equation}
where $\Omega$ is a bounded domain in $\R^N$, $1 < q < p < N$, $p^\ast = Np/(N - p)$, $\lambda, \mu > 0$, $h \in L^{{p^\ast}'}(\Omega) \minuszero$, and ${p^\ast}' = p^\ast/(p^\ast - 1)$. Let
\begin{equation} \label{1.23}
E(u) = \int_\Omega \left(\frac{1}{p}\, |\nabla u|^p + \frac{\mu}{q}\, |\nabla u|^q - \frac{\lambda}{p}\, |u|^p - \frac{1}{p^\ast}\, |u|^{p^\ast} - h(x)\, u\right) dx, \quad u \in W^{1,\,p}_0(\Omega)
\end{equation}
be the associated variational functional and let $S_{N,\,p}$ be as in \eqref{1.15}. We have the following theorem.

\begin{theorem} \label{Theorem 1.10}
There exists $\mu_0 > 0$ such that problem \eqref{1.22} has two nontrivial solutions $u_1$ and $u_2$ satisfying
\[
E(u_1) < E(u_2), \qquad 0 < E(u_2) < \frac{1}{N}\, S_{N,\,p}^{N/p}
\]
for all $\mu > 0$ and $h \in L^{{p^\ast}'}(\Omega) \minuszero$ with $\mu + \pnorm[{p^\ast}']{h} < \mu_0$ in each of the following cases:
\begin{enumroman}
\item $N \ge p^2$ and $\lambda > 0$ is not a Dirichlet eigenvalue of $- \Delta_p$ in $\Omega$,
\item $N\, (N - p^2) > p^2$ and $\lambda > 0$.
\end{enumroman}
\end{theorem}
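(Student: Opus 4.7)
The plan is to reduce Theorem \ref{Theorem 1.10} to a verification of the hypotheses of Theorem \ref{Theorem 1.5}, reusing the bubble-based linking construction developed for Theorem \ref{Theorem 1.6}. I take $W = W^{1,\,p}_0(\Omega)$ with norm $\norm{u} = \left(\int_\Omega |\nabla u|^p\, dx\right)^{1/p}$ and read off the relevant operators from \eqref{1.23}: set $\Ap[u] = - \Delta_p\, u$, $\Bp[u] = |u|^{p-2}\, u$, $f(u) = |u|^{p^\ast - 2}\, u$ and $g(u) = \Delta_q\, u$, whose potentials are $I_p(u) = \frac{1}{p}\, \norm{u}^p$, $J_p(u) = \frac{1}{p} \int_\Omega |u|^p\, dx$, $F(u) = \frac{1}{p^\ast} \int_\Omega |u|^{p^\ast} dx$ and $G(u) = - \frac{1}{q} \int_\Omega |\nabla u|^q\, dx$. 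Properties $(A_1)$--$(A_2)$, $(B_1)$--$(B_3)$ and $(F_1)$--$(F_3)$ are standard for the $p$-Laplacian and are verified exactly as in the proof of Theorem \ref{Theorem 1.6}. For $(G)$, H\"older's inequality on the bounded domain $\Omega$ yields $|G(u)| \le |\Omega|^{(p - q)/p}\, \norm{u}^q/q$, so $G$ is bounded on bounded subsets of $W$.

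Next I would verify that $E$ satisfies \PS{c} for every $c < c_{\mu,\,h}^\ast$, where $c_{\mu,\,h}^\ast \to \frac{1}{N}\, S_{N,\,p}^{N/p}$ as $\mu + \pnorm[{p^\ast}']{h} \to 0$, so that $c^\ast = \frac{1}{N}\, S_{N,\,p}^{N/p}$ in \eqref{1.8}. This rests on a standard concentration-compactness argument: the $q$-Laplacian contribution $G$ is weakly continuous on bounded sets (the embedding $W^{1,\,p}_0(\Omega) \incl W^{1,\,q}_0(\Omega)$ is compact since $q < p$), and $h$ is a linear perturbation that vanishes on concentrating profiles, so loss of compactness still costs at least $\frac{1}{N}\, S_{N,\,p}^{N/p}$ units of energy per bubble.

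The crux of the argument, and the main obstacle, is the construction of $C_\delta$, $w_\delta$ and $A_\delta$ satisfying \eqref{1.10} and \eqref{1.11}. The key observation is that $E_0$ in \eqref{1.9} drops both $g$ and $h$ and therefore coincides with the functional used in Theorem \ref{Theorem 1.6}. I would accordingly take $C_\delta$ to be the compact symmetric index-$k$ subset of $\Psi^{\lambda_k + \delta}$ furnished by \eqref{1.7}, and build $w_\delta$ from a truncated, $L^{p^\ast}$-normalized Aubin-Talenti extremal concentrated at an interior point of $\Omega$. Choosing $R$ large makes $E_0(Ru) \le 0$ on the compact set $A_\delta$, so \eqref{1.10} is immediate; the delicate part is the sharp level estimate \eqref{1.11}, which is where the split between cases $(i)$ and $(ii)$ arises. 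In case $(i)$ the gap $\lambda_k < \lambda < \lambda_{k+1}$ combined with $N \ge p^2$ suffices for the leading-order bubble expansion to dominate the $\lambda$-dependent error, whereas in case $(ii)$, where $\lambda$ may coincide with some $\lambda_k$, one needs the stronger condition $N\, (N - p^2) > p^2$ so that the next-order interaction of the bubble with the $\lambda_k$-eigenspace still leaves room below $\frac{1}{N}\, S_{N,\,p}^{N/p}$. These estimates are precisely the ones carried out in the proof of Theorem \ref{Theorem 1.6}, and go through unchanged here because $E_0$ is identical, so an application of Theorem \ref{Theorem 1.5} now yields $\mu_0 > 0$ and two nontrivial solutions $u_1, u_2$ of \eqref{1.22} with $E(u_1) < E(u_2)$ and $0 < E(u_2) < c_{\mu,\,h}^\ast \le \frac{1}{N}\, S_{N,\,p}^{N/p}$ whenever $\mu + \pnorm[{p^\ast}']{h} < \mu_0$.
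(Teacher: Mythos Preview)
Your overall architecture matches the paper's proof exactly: apply Theorem \ref{Theorem 1.5} with the same choice of $\Ap,\Bp,f,g$, observe that $E_0$ coincides with the $E_0$ of Theorem \ref{Theorem 1.6}, and recycle the entire bubble/linking construction from that proof. The paper does precisely this, so the reduction to Theorem \ref{Theorem 1.6} for \eqref{1.10}--\eqref{1.11} is correct.

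There is, however, a genuine error in your \PS{} sketch. You assert that ``the embedding $W^{1,\,p}_0(\Omega) \incl W^{1,\,q}_0(\Omega)$ is compact since $q < p$'' and conclude that $G(u) = -\frac{1}{q}\int_\Omega |\nabla u|^q\,dx$ is weakly continuous. This is false: on a bounded domain the inclusion $W^{1,\,p}_0(\Omega) \incl W^{1,\,q}_0(\Omega)$ is merely continuous (by H\"older), not compact; equivalently, $L^p(\Omega) \incl L^q(\Omega)$ is never compact, and there is no Rellich-type gain at the gradient level. Consequently $G$ is not weakly continuous, and the $q$-Laplacian term does not simply ``vanish on concentrating profiles.''

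The correct argument, carried out in Lemma \ref{Lemma 3.17}, uses instead the \emph{weak lower semicontinuity} $\liminf \pnorm[q]{\nabla u_j} \ge \pnorm[q]{\nabla u}$ together with the sign condition $\mu > 0$. After subtracting the limit equation from $E'(u_j)\,u_j = \o(\norm{u_j})$ and applying Br\'ezis--Lieb, one obtains
\[
\norm{\widetilde u_j}^p + \mu\big(\pnorm[q]{\nabla u_j}^q - \pnorm[q]{\nabla u}^q\big) = \pnorm[p^\ast]{\widetilde u_j}^{p^\ast} + \o(1),
\]
and the middle term is $\ge \o(1)$ precisely because $\mu > 0$, yielding $\norm{\widetilde u_j}^p \le \pnorm[p^\ast]{\widetilde u_j}^{p^\ast} + \o(1)$. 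A second use of the same inequality, in the energy identity, shows that the $q$-term contributes nonnegatively to the lower bound on $c$, and one arrives at the threshold $c_{\mu,\,h}^\ast = \frac{1}{N}\,S_{N,\,p}^{N/p} - \kappa\,\pnorm[{p^\ast}']{h}^{{p^\ast}'}$ (which, incidentally, is independent of $\mu$). So the sign restriction $\mu > 0$ in the statement is not incidental; it is exactly what makes the \PS{} argument go through in the absence of weak continuity of $G$.
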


\begin{remark}
Theorem \ref{Theorem 1.1} is the special case $\mu = 0$ of Theorem \ref{Theorem 1.10}.
\end{remark}

\section{Proofs} \label{Proofs}

\subsection{Proof of Theorem \ref{Theorem 1.5}}

Proof of Theorem \ref{Theorem 1.5} will be based on a special case of an abstract critical point theorem proved in Perera \cite{MR4293883}. Let $W$ be a Banach space and let $\M$ be a bounded symmetric subset of $W \minuszero$ radially homeomorphic to the unit sphere $S = \set{u \in W : \norm{u} = 1}$, i.e., the restriction to $\M$ of the radial projection $\pi : W \minuszero \to S,\, u \mapsto u/\norm{u}$ is a homeomorphism. Then the radial projection from $W \minuszero$ onto $\M$ is given by $\pi_\M = (\restr{\pi}{\M})^{-1} \comp \pi$. For a symmetric set $A \subset W \minuszero$, we denote by $i(A)$ its $\Z_2$-cohomological index (see Fadell and Rabinowitz \cite{MR0478189}). The following theorem is the special case $r = 0$ of \cite[Theorem 1.1]{MR4293883}.

\begin{theorem} \label{Theorem 2.1}
Let $E$ be a $C^1$-functional on $W$ and let $A_0$ and $B_0$ be disjoint closed symmetric subsets of $\M$ such that
\begin{equation} \label{2.1}
i(A_0) = i(\M \setminus B_0) = k < \infty.
\end{equation}
Assume that there exist $w_0 \in \M \setminus A_0$, $0 < \rho < R$, and $a < b$ such that, setting
\begin{gather}
A_1 = \set{\pi_\M((1 - \tau)\, v + \tau w_0) : v \in A_0,\, 0 \le \tau \le 1}, \notag\\[10pt]
A = \set{tv : v \in A_0,\, 0 \le t \le R} \cup \set{Ru : u \in A_1}, \qquad B = \set{\rho w : w \in B_0}, \label{2.2}\\[10pt]
A^\ast = \set{tu : u \in A_1,\, 0 \le t \le R}, \qquad B^\ast = \set{tw : w \in B_0,\, 0 \le t \le \rho}, \label{2.3}
\end{gather}
we have
\[
a < \inf_{B^\ast}\, E, \qquad \sup_A\, E < \inf_B\, E, \qquad \sup_{A^\ast}\, E < b.
\]
If $E$ satisfies the {\em \PS{c}} condition for all $c \in (a,b)$, then $E$ has two critical points $u_1$ and $u_2$ with
\[
\inf_{B^\ast}\, E \le E(u_1) \le \sup_A\, E, \qquad \inf_B\, E \le E(u_2) \le \sup_{A^\ast}\, E.
\]
\end{theorem}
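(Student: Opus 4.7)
The plan is to produce the two critical points via two linking-type minimax principles, each exploiting the topological linking that comes from the hypothesis $i(A_0) = i(\M \setminus B_0) = k$ together with the cone construction defining $A_1$. I would first introduce the classes of admissible deformations
\[
\Gamma_2 = \set{\eta \in C(A^\ast, W) : \restr{\eta}{A} = \text{id}}, \qquad \Gamma_1 = \set{\delta \in C(B^\ast, W) : \restr{\delta}{B} = \text{id}},
\]
and the corresponding minimax values
\[
c_2 = \inf_{\eta \in \Gamma_2}\, \sup_{u \in A^\ast}\, E(\eta(u)), \qquad c_1 = \sup_{\delta \in \Gamma_1}\, \inf_{w \in B^\ast}\, E(\delta(w)).
\]
Choosing $\eta = \text{id}_{A^\ast}$ and $\delta = \text{id}_{B^\ast}$ immediately gives the trivial bounds $c_2 \le \sup_{A^\ast} E < b$ and $c_1 \ge \inf_{B^\ast} E > a$.

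The core of the proof is the pair of linking claims: $\eta(A^\ast) \cap B \ne \emptyset$ for every $\eta \in \Gamma_2$, and $\delta(B^\ast) \cap A \ne \emptyset$ for every $\delta \in \Gamma_1$. Granting these, $c_2 \ge \inf_B E$ and $c_1 \le \sup_A E$, so together with the assumption $\sup_A E < \inf_B E$ both minimax values lie strictly inside $(a,b)$, where \PS{c} holds. A standard quantitative deformation lemma, with the pseudogradient flow cut off so as to preserve the admissible classes (i.e., to fix $A$, respectively $B$), then shows that $c_1$ and $c_2$ are critical values of $E$, yielding $u_1, u_2$ satisfying the displayed ordering.

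The main obstacle will be the linking claims. For the first, I would argue by contradiction: assuming $\eta(A^\ast) \cap B = \emptyset$, a small perturbation together with a radial reparametrization inside $A^\ast$ produces a continuous map $\Phi : A_1 \to \M \setminus B_0$ that restricts to the inclusion on $A_0$ and sends the apex of $A_1$ to $w_0$. The odd extension $u \mapsto -\Phi(-u)$ on $-A_1$ then realizes $A_1 \cup (-A_1)$ as a symmetric subset of $\M \setminus B_0$; since this set is homeomorphic to the unreduced suspension of $A_0$ with the natural antipodal action, the suspension property of the Fadell--Rabinowitz cohomological index yields $i(A_1 \cup (-A_1)) = i(A_0) + 1 = k+1$, which by monotonicity contradicts $i(\M \setminus B_0) = k$. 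The second linking claim is proved by a dual index argument on the cone $B^\ast$. Both are carried out in detail in Perera \cite[Theorem 1.1]{MR4293883}, of which Theorem \ref{Theorem 2.1} is the special case $r = 0$.
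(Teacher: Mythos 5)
The paper does not actually prove Theorem \ref{Theorem 2.1}: it is stated and used as the special case $r=0$ of \cite[Theorem 1.1]{MR4293883}, so there is no internal proof to compare yours against, and since your argument ultimately rests on exactly that citation, your justification is at the same level of rigor as the paper's. Your reconstruction of what lies behind the citation is essentially on target: two minimax/maximin values over the classes of maps fixing $A$, resp.\ $B$; both values trapped in $(a,b)$ by $a<\inf_{B^\ast}E$, $\sup_A E<\inf_B E$, $\sup_{A^\ast}E<b$; criticality via a deformation that can be cut off so as to fix $A$ (resp.\ $B$), which works because $E\le\sup_A E<c_2$ on $A$ and $E\ge\inf_B E>c_1$ on $B$; and the linking itself obtained from monotonicity of the cohomological index together with its stability under suspension --- which is precisely why the index, and not the genus, is needed, as the paper's introduction remarks.

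If your sketch were meant to stand on its own, however, the two ``linking claims'' are the entire content and both are left at a level where they would still fail without further work. For the first, producing a continuous $\Phi:A_1\to\M\setminus B_0$ with $\restr{\Phi}{A_0}=\mathrm{id}$ from a map $\eta$ with $\eta(A^\ast)\cap B=\emptyset$ is the delicate step: a ``radial reparametrization'' amounts to a continuous selection of a parameter at which $t\mapsto\eta(tu)$ crosses the radial level $\rho$, and first/last hitting times are not continuous; note also that $A^\ast$ and $A_1$ are not symmetric, so no index argument applies before the odd-extension device, the odd extension is continuous only if $\Phi$ is odd on $A_1\cap(-A_1)$ (which need not reduce to $A_0$), and what is available and needed is the inequality $i(A_1\cup(-A_1))\ge i(A_0)+1$, obtained via an odd map from the abstract suspension of $A_0$, rather than the equality you assert. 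For the second claim ($\delta(B^\ast)\cap A\ne\emptyset$ for every $\delta$ fixing $B$) you give no argument at all; it is just as nontrivial as the first, especially since $B^\ast$ is typically infinite dimensional. These are exactly the points carried out in \cite[Theorem 1.1]{MR4293883}, so deferring to it, as the paper itself does, is the honest resolution; just be aware that your intermediate steps are a plausible outline of that proof, not a proof.
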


We are now ready to prove Theorem \ref{Theorem 1.5}.

\begin{proof}[Proof of Theorem \ref{Theorem 1.5}]
We apply Theorem \ref{Theorem 2.1} to the functional $E$ defined in \eqref{1.4}, taking $A_0 = C_\delta$, $B_0 = \Psi_{\lambda_{k+1}}$, $w_0 = w_\delta$, and $b = c_{\mu,\,h}^\ast$, where $\delta \in (0,\lambda_{k+1} - \lambda)$ is to be chosen. Since $A_0 \subset \Psi^{\lambda + \delta}$ and $\lambda + \delta < \lambda_{k+1}$, $A_0$ and $B_0$ are disjoint. We have $i(A_0) = k$ by assumption and $i(\M \setminus B_0) = k$ by \eqref{1.7}, so \eqref{2.1} holds.

For $u \in \M$ and $t > 0$,
\begin{equation} \label{2.4}
E_0(tu) = t^p \left(1 - \frac{\lambda}{\Psi(u)}\right) - F(tu).
\end{equation}
For $w \in B_0$, this together with $(F_1)$ gives
\[
E_0(tw) \ge t^p \left(1 - \frac{\lambda}{\lambda_{k+1}} + \o(1)\right) \quad \text{as } t \to 0.
\]
Since $\lambda < \lambda_{k+1}$, it follows from this that $\exists\, \rho \in (0,R)$ such that
\begin{equation} \label{2.5}
\inf_B\, E_0 > 0,
\end{equation}
where $B$ is as in \eqref{2.2}. For $v \in A_0$ and $0 \le t \le R$, \eqref{2.4} together with $(F_2)$ gives
\begin{equation} \label{2.6}
E_0(tv) \le \frac{\delta R^p}{\lambda + \delta}
\end{equation}
since $A_0 \subset \Psi^{\lambda + \delta}$. Fix $\delta$ so small that the right-hand side is less than $\inf E_0(B)$. Then it follows from \eqref{2.6} and \eqref{1.10} that
\begin{equation} \label{2.7}
\sup_A\, E_0 \le \frac{\delta R^p}{\lambda + \delta} < \inf_B\, E_0,
\end{equation}
where $A$ is as in \eqref{2.2}.

We have
\begin{equation} \label{2.8}
|E(u) - E_0(u)| \le |\mu| |G(u)| + \dnorm{h} \norm{u} \quad \forall u \in W.
\end{equation}
Let $A^\ast$ and $B^\ast$ be as in \eqref{2.3}. Since $A$, $B$, and $A^\ast$ are bounded and $G$ is bounded on bounded sets, it follows from \eqref{2.8}, \eqref{2.5}, \eqref{2.7}, \eqref{1.8}, and \eqref{1.11} that $\exists\, \mu_0 > 0$ such that
\begin{equation} \label{2.9}
\inf_B\, E > 0, \qquad \sup_A\, E < \inf_B\, E, \qquad \sup_{A^\ast}\, E < c_{\mu,\,h}^\ast
\end{equation}
for all $\mu \in \R$ and $h \in W^\ast \minuszero$ with $|\mu| + \dnorm{h} < \mu_0$. Since $B^\ast$ is bounded and $F$ is also bounded on bounded sets,
\[
\inf_{B^\ast}\, E > - \infty.
\]
So we can apply Theorem \ref{Theorem 2.1} with $a < \inf E(B^\ast)$ to get two critical points $u_1$ and $u_2$ with
\begin{equation} \label{2.10}
\inf_{B^\ast}\, E \le E(u_1) \le \sup_A\, E, \qquad \inf_B\, E \le E(u_2) \le \sup_{A^\ast}\, E.
\end{equation}
The inequalities in \eqref{1.12} follow from \eqref{2.9} and \eqref{2.10}.
\end{proof}

\subsection{Proof of Theorem \ref{Theorem 1.6}}

We prove Theorem \ref{Theorem 1.6} by applying Theorem \ref{Theorem 1.5} with $W = W^{1,\,p}_0(\Omega)$ and the operators $\Ap, \Bp, f, g \in C(W^{1,\,p}_0(\Omega),W^{-1,\,p'}(\Omega))$ and $h \in W^{-1,\,p'}(\Omega)$ given by
\begin{multline*}
\dualp{\Ap[u]}{v} = \int_\Omega |\nabla u|^{p-2}\, \nabla u \cdot \nabla v\, dx, \quad \dualp{\Bp[u]}{v} = \int_\Omega |u|^{p-2}\, uv\, dx,\\[10pt]
\dualp{f(u)}{v} = \int_\Omega |u|^{p^\ast - 2}\, uv\, dx, \quad \dualp{g(u)}{v} = \int_\Omega |u|^{q-2}\, uv\, dx, \quad u, v \in W^{1,\,p}_0(\Omega)
\end{multline*}
and
\[
\dualp{h}{v} = \int_\Omega h(x)\, v\, dx, \quad v \in W^{1,\,p}_0(\Omega).
\]
We begin by determining a threshold level below which the functional $E$ in \eqref{1.14} satisfies the \PS{} condition.

\begin{lemma} \label{Lemma 2.2}
There exists $\kappa > 0$ such that $E$ satisfies the {\em \PS{c}} condition for all
\begin{equation} \label{2.11}
c < \frac{1}{N}\, S_{N,\,p}^{N/p} - \kappa \left(|\mu|^{(p^\ast/q)'} + \pnorm[{p^\ast}']{h}^{{p^\ast}'}\right).
\end{equation}
\end{lemma}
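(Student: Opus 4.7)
The plan is to adapt the standard concentration--compactness analysis to the functional $E$, controlling bubbling quantitatively by the size of $|\mu|$ and $\pnorm[{p^\ast}']{h}$. Let $(u_n) \subset W^{1,p}_0(\Omega)$ be a Palais--Smale sequence at level $c$, so $E(u_n) \to c$ and $E'(u_n) \to 0$ in $W^{-1,p'}(\Omega)$. First I would show $(u_n)$ is bounded by forming $pE(u_n) - \dualp{E'(u_n)}{u_n}$, which cancels the $\norm{\nabla u_n}^p$ and $\lambda\pnorm[p]{u_n}^p$ contributions and leaves
\[
\frac{p}{N}\pnorm[p^\ast]{u_n}^{p^\ast} + \mu\left(1 - \frac{p}{q}\right)\pnorm[q]{u_n}^q - (p-1)\int_\Omega h\,u_n\, dx = pc + \o(1) + \o(\norm{u_n}).
\]
Bounding $\pnorm[q]{u_n}$ by $\pnorm[p^\ast]{u_n}$ through H\"{o}lder and applying Young's inequality to absorb the $|\mu|$- and $h$-terms yields $\pnorm[p^\ast]{u_n}^{p^\ast} \le C(1 + \norm{u_n})$; substituting this back into $E(u_n) = c + \o(1)$ gives $\norm{u_n}^p \le C(1 + \norm{u_n})$ and hence boundedness. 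Passing to a subsequence, $u_n \wto u$ in $W^{1,p}_0(\Omega)$, $u_n \to u$ in $L^r(\Omega)$ for every $r < p^\ast$ and a.e., and a Boccardo--Murat type argument upgrades this to a.e.\ convergence of the gradients, from which $E'(u) = 0$ follows by passage to the limit in $\dualp{E'(u_n)}{\phi}$.

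Setting $v_n = u_n - u$, the Brezis--Lieb lemma gives
\[
\norm{u_n}^p = \norm{u}^p + \norm{v_n}^p + \o(1), \qquad \pnorm[p^\ast]{u_n}^{p^\ast} = \pnorm[p^\ast]{u}^{p^\ast} + \pnorm[p^\ast]{v_n}^{p^\ast} + \o(1),
\]
while strong $L^p$- and $L^q$-convergence collapses the subcritical and forcing terms onto $u$. Combining $E(u_n) \to c$, $\dualp{E'(u_n)}{u_n} \to 0$, and $\dualp{E'(u)}{u} = 0$ one obtains $\norm{v_n}^p \to \ell$ and $\pnorm[p^\ast]{v_n}^{p^\ast} \to \ell$ for some $\ell \ge 0$, together with $c = E(u) + \ell/N$. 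The Sobolev inequality then forces either $\ell = 0$, in which case $\norm{u_n}^p \to \norm{u}^p$ and uniform convexity promotes weak to strong convergence $u_n \to u$, or else $\ell \ge S_{N,p}^{N/p}$, which is the bubbling scenario to be ruled out.

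To exclude bubbling I would use $\dualp{E'(u)}{u} = 0$ together with the identity $1/N + 1/p^\ast = 1/p$ to rewrite
\[
E(u) = E(u) - \frac{1}{p^\ast}\dualp{E'(u)}{u} = \mu\left(\frac{1}{p} - \frac{1}{q}\right)\pnorm[q]{u}^q + \frac{1}{N}\pnorm[p^\ast]{u}^{p^\ast} - \frac{1}{p'}\int_\Omega h\,u\, dx.
\]
Controlling $\pnorm[q]{u}$ by $\pnorm[p^\ast]{u}$ via H\"{o}lder on the bounded domain $\Omega$, and applying Young's inequality with conjugate pairs $(p^\ast/q,\, (p^\ast/q)')$ to the $\mu$-term and $(p^\ast,\, {p^\ast}')$ to the $h$-term, absorbs half of $\frac{1}{N}\pnorm[p^\ast]{u}^{p^\ast}$ and leaves
\[
E(u) \ge - \kappa \left(|\mu|^{(p^\ast/q)'} + \pnorm[{p^\ast}']{h}^{{p^\ast}'}\right)
\]
for a suitable $\kappa > 0$ depending only on $N$, $p$, $q$, and $|\Omega|$. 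Combined with $c \ge E(u) + S_{N,p}^{N/p}/N$, this contradicts \eqref{2.11}, so $\ell = 0$ and the Palais--Smale condition holds.

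The main obstacle is the final estimate: pinning down the correct exponents $(p^\ast/q)'$ on $|\mu|$ and ${p^\ast}'$ on $\pnorm[{p^\ast}']{h}$ through Young's inequality, uniformly across both regimes $q < p$ and $q > p$ and with no restriction on the sign of $\mu$, since the coefficient $\mu(1/p - 1/q)$ can flip sign. The boundedness step is a secondary concern because $\lambda$ is allowed to exceed $\lambda_1$, which is why the combination $pE - \dualp{E'}{\cdot}$, rather than the more common $E - \frac{1}{p^\ast}\dualp{E'}{\cdot}$, is used to annihilate the indefinite $\lambda$-contribution.
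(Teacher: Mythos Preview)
Your proposal is correct and follows essentially the same route as the paper: Br\'ezis--Lieb splitting, the identity $c = E(u) + \ell/N$, and the H\"older--Young lower bound $E(u) \ge -\kappa\bigl(|\mu|^{(p^\ast/q)'} + \pnorm[{p^\ast}']{h}^{{p^\ast}'}\bigr)$; the only differences are that the paper obtains boundedness from $E(u_j) - \tfrac{1}{r}\dualp{E'(u_j)}{u_j}$ with an auxiliary $r\in(p,p^\ast)$ rather than your $r=p$ combination, and leaves the a.e.\ gradient convergence implicit. One slip to fix: the combination that produces your displayed formula for $E(u)$ is $E(u)-\tfrac{1}{p}\dualp{E'(u)}{u}$, not $E(u)-\tfrac{1}{p^\ast}\dualp{E'(u)}{u}$ (the latter would leave a residual $\tfrac{1}{N}(\norm{\nabla u}_p^p-\lambda\pnorm[p]{u}^p)$ term, though being nonnegative it would not affect the argument).
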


\begin{proof}
Let $c \in \R$ and let $\seq{u_j}$ be a sequence in $W^{1,\,p}_0(\Omega)$ such that
\begin{equation} \label{2.12}
E(u_j) = \int_\Omega \left(\frac{1}{p}\, |\nabla u_j|^p - \frac{\lambda}{p}\, |u_j|^p - \frac{\mu}{q}\, |u_j|^q - \frac{1}{p^\ast}\, |u_j|^{p^\ast} - h(x)\, u_j\right) dx = c + \o(1)
\end{equation}
and
\begin{multline} \label{2.13}
\dualp{E'(u_j)}{v} = \int_\Omega \big(|\nabla u_j|^{p-2}\, \nabla u_j \cdot \nabla v - \lambda\, |u_j|^{p-2}\, u_j\, v - \mu\, |u_j|^{q-2}\, u_j\, v - |u_j|^{p^\ast - 2}\, u_j\, v\\[10pt]
- h(x)\, v\big)\, dx = \o(\norm{v}) \quad \forall v \in W^{1,\,p}_0(\Omega).
\end{multline}
Taking $v = u_j$ in \eqref{2.13} gives
\begin{equation} \label{2.14}
\int_\Omega \left(|\nabla u_j|^p - \lambda\, |u_j|^p - \mu\, |u_j|^q - |u_j|^{p^\ast} - h(x)\, u_j\right) dx = \o(\norm{u_j}).
\end{equation}
Let $r \in (p,p^\ast)$. Dividing \eqref{2.14} by $r$ and subtracting from \eqref{2.12} gives
\begin{multline*}
\int_\Omega \bigg[\left(\frac{1}{p} - \frac{1}{r}\right) |\nabla u_j|^p - \lambda \left(\frac{1}{p} - \frac{1}{r}\right) |u_j|^p - \mu \left(\frac{1}{q} - \frac{1}{r}\right) |u_j|^q + \left(\frac{1}{r} - \frac{1}{p^\ast}\right) |u_j|^{p^\ast}\\[10pt]
- \left(1 - \frac{1}{r}\right) h(x)\, u_j\bigg] dx = c + \o(1) + \o(\norm{u_j}),
\end{multline*}
and it follows from this that $\seq{u_j}$ is bounded. So a renamed subsequence converges to some $u$ weakly in $W^{1,\,p}_0(\Omega)$, strongly in $L^t(\Omega)$ for all $t \in [1,p^\ast)$, and a.e.\! in $\Omega$. Setting $\widetilde{u}_j = u_j - u$, we will show that $\widetilde{u}_j \to 0$ in $W^{1,\,p}_0(\Omega)$.

Equation \eqref{2.14} gives
\begin{equation} \label{2.15}
\norm{u_j}^p = \pnorm[p^\ast]{u_j}^{p^\ast} + \int_\Omega \left(\lambda\, |u|^p + \mu\, |u|^q + h(x)\, u\right) dx + \o(1).
\end{equation}
Taking $v = u$ in \eqref{2.13} and passing to the limit gives
\begin{equation} \label{2.16}
\norm{u}^p = \pnorm[p^\ast]{u}^{p^\ast} + \int_\Omega \left(\lambda\, |u|^p + \mu\, |u|^q + h(x)\, u\right) dx.
\end{equation}
Since
\begin{equation} \label{2.17}
\norm{\widetilde{u}_j}^p = \norm{u_j}^p - \norm{u}^p + \o(1)
\end{equation}
and
\[
\pnorm[p^\ast]{\widetilde{u}_j}^{p^\ast} = \pnorm[p^\ast]{u_j}^{p^\ast} - \pnorm[p^\ast]{u}^{p^\ast} + \o(1)
\]
by the Br{\'e}zis-Lieb lemma \cite[Theorem 1]{MR699419}, \eqref{2.15} and \eqref{2.16} imply
\[
\norm{\widetilde{u}_j}^p = \pnorm[p^\ast]{\widetilde{u}_j}^{p^\ast} + \o(1) \le \frac{\norm{\widetilde{u}_j}^{p^\ast}}{S_{N,\,p}^{p^\ast/p}} + \o(1),
\]
so
\begin{equation} \label{2.18}
\norm{\widetilde{u}_j}^p \left(S_{N,\,p}^{N/(N-p)} - \norm{\widetilde{u}_j}^{p^2/(N-p)}\right) \le \o(1).
\end{equation}
On the other hand, \eqref{2.12} gives
\[
c = \frac{1}{p} \norm{u_j}^p - \frac{1}{p^\ast} \pnorm[p^\ast]{u_j}^{p^\ast} - \int_\Omega \left(\frac{\lambda}{p}\, |u|^p + \frac{\mu}{q}\, |u|^q + h(x)\, u\right) dx + \o(1),
\]
and a straightforward calculation combining this with \eqref{2.15}--\eqref{2.17} gives
\[
c = \frac{1}{N} \norm{\widetilde{u}_j}^p + \int_\Omega \left[\frac{1}{N}\, |u|^{p^\ast} - \mu \left(\frac{1}{q} - \frac{1}{p}\right) |u|^q - \left(1 - \frac{1}{p}\right) h(x)\, u\right] dx + \o(1).
\]
The integral on the right-hand side is greater than or equal to
\[
\frac{1}{N} \pnorm[p^\ast]{u}^{p^\ast} - |\mu| \abs{\frac{1}{q} - \frac{1}{p}} \vol{\Omega}^{1 - q/p^\ast} \pnorm[p^\ast]{u}^q - \left(1 - \frac{1}{p}\right) \pnorm[{p^\ast}']{h} \pnorm[p^\ast]{u} \ge - \kappa \left(|\mu|^{(p^\ast/q)'} + \pnorm[{p^\ast}']{h}^{{p^\ast}'}\right)
\]
for some $\kappa > 0$ by the H\"{o}lder and Young's inequalities, so
\[
\norm{\widetilde{u}_j}^p \le N \left[c + \kappa \left(|\mu|^{(p^\ast/q)'} + \pnorm[{p^\ast}']{h}^{{p^\ast}'}\right)\right] + \o(1).
\]
Combining this with \eqref{2.18} shows that $\widetilde{u}_j \to 0$ when \eqref{2.11} holds.
\end{proof}

We will apply Theorem \ref{Theorem 1.5} with
\[
c_{\mu,\,h}^\ast = \frac{1}{N}\, S_{N,\,p}^{N/p} - \kappa \left(|\mu|^{(p^\ast/q)'} + \pnorm[{p^\ast}']{h}^{{p^\ast}'}\right),
\]
where $\kappa > 0$ is as in Lemma \ref{Lemma 2.2}. Note that
\[
\lim_{\mu,\, \pnorm[{p^\ast}']{h} \to 0}\, c_{\mu,\,h}^\ast = \frac{1}{N}\, S_{N,\,p}^{N/p}.
\]
We have
\begin{gather*}
\M = \set{u \in W^{1,\,p}_0(\Omega) : \norm{u}^p = p},\\[10pt]
\Psi(u) = \frac{p}{\pnorm[p]{u}^p}, \quad u \in \M,\\[10pt]
\pi_\M(u) = \frac{p^{1/p}\, u}{\norm{u}}, \quad u \in W^{1,\,p}_0(\Omega) \minuszero,
\end{gather*}
and
\[
E_0(u) = \int_\Omega \left(\frac{1}{p}\, |\nabla u|^p - \frac{\lambda}{p}\, |u|^p - \frac{1}{p^\ast}\, |u|^{p^\ast}\right) dx, \quad u \in W^{1,\,p}_0(\Omega).
\]
Let $\lambda_k \le \lambda < \lambda_{k+1}$. We need to show that there exist $R > 0$ and, for all sufficiently small $\delta > 0$, a compact symmetric subset $C_\delta$ of $\Psi^{\lambda + \delta}$ with $i(C_\delta) = k$ and $w_\delta \in \M \setminus C_\delta$ such that, setting $A_\delta = \set{\pi_\M((1 - \tau)\, v + \tau w_\delta) : v \in C_\delta,\, 0 \le \tau \le 1}$, we have
\begin{equation} \label{2.19}
\sup_{u \in A_\delta}\, E_0(Ru) \le 0, \qquad \sup_{u \in A_\delta,\, 0 \le t \le R}\, E_0(tu) < \frac{1}{N}\, S_{N,\,p}^{N/p}.
\end{equation}

Since $\lambda_k < \lambda_{k+1}$, $\Psi^{\lambda_k}$ has a compact symmetric subset $C_0$ of index $k$ that is bounded in $L^\infty(\Omega) \cap C^1_\loc(\Omega)$ (see Degiovanni and Lancelotti \cite[Theorem 2.3]{MR2514055}). We may assume without loss of generality that $0 \in \Omega$. Let $\rho_0 = \dist{0}{\bdry{\Omega}}$, let $\eta : [0,\infty) \to [0,1]$ be a smooth function such that $\eta(t) = 0$ for $t \le 3/4$ and $\eta(t) = 1$ for $t \ge 1$, let
\[
u_\rho(x) = \eta\bigg(\frac{|x|}{\rho}\bigg)\, u(x), \quad u \in C_0,\, 0 < \rho \le \rho_0/2,
\]
and let
\[
C = \set{\pi_\M(u_\rho) : u \in C_0}.
\]

\begin{lemma} \label{Lemma 2.3}
The set $C$ is a compact symmetric subset of $\Psi^{\lambda_k + c_1\, \rho^{N-p}}$ for some constant $c_1 > 0$. If $\lambda_k + c_1\, \rho^{N-p} < \lambda_{k+1}$, then $i(C) = k$.
\end{lemma}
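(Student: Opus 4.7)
The plan is to derive two uniform truncation estimates for $u \in C_0$ and then invoke monotonicity of the Fadell--Rabinowitz index. The bound on $C_0$ in $L^\infty(\Omega) \cap C^1_\loc(\Omega)$ supplies constants $M, M'$ such that $|u| \le M$ on $\Omega$ and $|\nabla u| \le M'$ on the compact set $\overline{B_{\rho_0/2}(0)} \subset \Omega$ for every $u \in C_0$. Since $u_\rho$ agrees with $u$ off $B_\rho(0)$, vanishes on $B_{3\rho/4}(0)$, and $|\nabla \eta(|\cdot|/\rho)| \le \norm[\infty]{\eta'}/\rho$ on the annulus $B_\rho(0) \setminus B_{3\rho/4}(0)$ of volume $O(\rho^N)$, a direct integration yields, uniformly for $u \in C_0$ and small $\rho$,
\[
\pnorm[p]{\nabla u_\rho}^p \le \pnorm[p]{\nabla u}^p + c_1'\, \rho^{N-p}, \qquad \pnorm[p]{u_\rho}^p \ge \pnorm[p]{u}^p - c_2\, \rho^N.
\]
The $\rho^{N-p}$ term comes from $\int_{B_\rho \setminus B_{3\rho/4}} |u|^p\, |\nabla \eta(|\cdot|/\rho)|^p\, dx \le C M^p\, \rho^{N-p}$, while the $\rho^N$ term comes from the $L^p$-integral of $(1 - \eta(|\cdot|/\rho)^p)\, |u|^p$ over $B_\rho(0)$.

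Next I exploit that every $u \in C_0 \subset \M \cap \Psi^{\lambda_k}$ satisfies $\pnorm[p]{\nabla u}^p = p$ and $\pnorm[p]{u}^p \ge p/\lambda_k$. For $\rho$ small, the second estimate gives $\pnorm[p]{u_\rho}^p \ge p/(2\lambda_k) > 0$, so $u_\rho \ne 0$ and $\pi_\M(u_\rho)$ is well-defined, with
\[
\Psi(\pi_\M(u_\rho)) = \frac{\pnorm[p]{\nabla u_\rho}^p}{\pnorm[p]{u_\rho}^p} \le \frac{p + c_1'\, \rho^{N-p}}{p/\lambda_k - c_2\, \rho^N} \le \lambda_k + c_1\, \rho^{N-p}
\]
for a suitable $c_1 > 0$. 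The map $T_\rho : u \mapsto \eta(|\cdot|/\rho)\, u$ is a bounded odd linear operator on $W^{1,\,p}_0(\Omega)$ because $\eta(|\cdot|/\rho) \in C^\infty_b(\R^N)$ (it vanishes in a neighbourhood of the origin, the only singular point of $|\cdot|$), so $T_\rho(C_0)$ is compact and symmetric; composing with the continuous odd radial projection $\pi_\M$ on $W^{1,\,p}_0(\Omega) \minuszero$ exhibits $C$ as a compact symmetric subset of $\Psi^{\lambda_k + c_1\rho^{N-p}}$.

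For the index claim, the continuous odd surjection $\pi_\M \comp T_\rho : C_0 \to C$ yields $k = i(C_0) \le i(C)$ by monotonicity of the $\Z_2$-cohomological index. Conversely, under the hypothesis $\lambda_k + c_1\rho^{N-p} < \lambda_{k+1}$, an inequality $i(C) \ge k + 1$ would put $C \in \F_{k+1}$ and force $\lambda_{k+1} \le \sup_C \Psi \le \lambda_k + c_1 \rho^{N-p}$, a contradiction; hence $i(C) = k$. The main technical hurdle is making the two truncation estimates uniform in $u \in C_0$, which is precisely the role played by the $L^\infty \cap C^1_\loc$ bound on $C_0$ imported from Degiovanni--Lancelotti.
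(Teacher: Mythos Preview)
Your proof is correct and follows essentially the same route as the paper: uniform truncation estimates using the $L^\infty \cap C^1_\loc$ bound on $C_0$, the ratio computation for $\Psi(\pi_\M(u_\rho))$, and monotonicity of the index under the odd continuous map $u \mapsto \pi_\M(u_\rho)$. The only cosmetic difference is in the upper index bound: the paper invokes the identity $i(\M \setminus \Psi_{\lambda_{k+1}}) = k$ from \eqref{1.7} directly, whereas you argue by contradiction via the minimax definition of $\lambda_{k+1}$; the two are equivalent, and your extra care in checking $u_\rho \ne 0$ for small $\rho$ is a detail the paper leaves implicit.
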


\begin{proof}
Let $u \in C_0$. Since functions in $C_0$ are bounded in $C^1(B_{\rho_0/2}(0))$ and belong to $\Psi^{\lambda_k}$,
\[
\int_\Omega |\nabla u_\rho|^p\, dx \le \int_{\Omega \setminus B_\rho(0)} |\nabla u|^p\, dx + \int_{B_\rho(0)} \left(|\nabla u| + \frac{|\eta'||u|}{\rho}\right)^p dx \le p + c_2\, \rho^{N-p}
\]
and
\[
\int_\Omega |u_\rho|^p\, dx \ge \int_{\Omega \setminus B_\rho(0)} |u|^p\, dx = \int_\Omega |u|^p\, dx - \int_{B_\rho(0)} |u|^p\, dx \ge \frac{p}{\lambda_k} - c_3\, \rho^N
\]
for some constants $c_2, c_3 > 0$. So
\[
\Psi(\pi_\M(u_\rho)) = \frac{\dint_\Omega |\nabla u_\rho|^p\, dx}{\dint_\Omega |u_\rho|^p\, dx} \le \lambda_k + c_1\, \rho^{N-p}
\]
for some constant $c_1 > 0$. Then $C \subset \Psi^{\lambda_k + c_1\, \rho^{N-p}}$. Since $C_0$ is a compact symmetric set and $u \mapsto \pi_\M(u_\rho)$ is an odd continuous map of $C_0$ onto $C$, $C$ is also a compact symmetric set and
\[
i(C) \ge i(C_0) = k
\]
by the monotonicity of the index. If $\lambda_k + c_1\, \rho^{N-p} < \lambda_{k+1}$, then $C \subset \M \setminus \Psi_{\lambda_{k+1}}$ and hence
\[
i(C) \le i(\M \setminus \Psi_{\lambda_{k+1}}) = k
\]
by \eqref{1.7}, so $i(C) = k$.
\end{proof}

\begin{lemma} \label{Lemma 2.4}
For any $w \in \M \setminus C$ with support in $\closure{B_{\rho/2}(0)}$, $\exists\, R > 0$ such that, setting $A = \set{\pi_\M((1 - \tau)\, v + \tau w) : v \in C,\, 0 \le \tau \le 1}$, we have
\[
\sup_{u \in A}\, E_0(Ru) \le 0.
\]
\end{lemma}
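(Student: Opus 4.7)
The plan is to exploit the disjoint-support structure of elements of $C$ and $w$ to set up a compactness argument, and then to use that the critical term $-R^{p^\ast}$ in $E_0$ dominates the remaining terms as $R \to \infty$. First, every $v \in C$ has the form $\pi_\M(u_\rho)$ for some $u \in C_0$, and by construction of the cutoff $\eta$ the function $u_\rho$ (and hence $v$) vanishes on $B_{3\rho/4}(0)$, while $w$ is supported in $\closure{B_{\rho/2}(0)}$. Thus $v$ and $w$ have disjoint supports, and since both are nonzero elements of $\M$, any convex combination $(1 - \tau)\, v + \tau w$ with $v \in C$ and $\tau \in [0,1]$ is nonzero. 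Hence $(v,\tau) \mapsto \pi_\M((1 - \tau)\, v + \tau w)$ is well-defined and continuous on $C \times [0,1]$, and since $C$ is compact by Lemma \ref{Lemma 2.3}, $A$ is a compact subset of $\M$.

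Next, the embedding $W^{1,\,p}_0(\Omega) \incl L^{p^\ast}(\Omega)$ is continuous and each $u \in A$ is nonzero, so $u \mapsto \pnorm[p^\ast]{u}^{p^\ast}$ attains a strictly positive minimum $c > 0$ on $A$, while $\pnorm[p]{u}^p$ attains a finite maximum $M$ there. On $\M$ one has $\norm{u}^p = p$, so a direct computation gives
\[
E_0(Ru) = R^p \left(1 - \frac{\lambda}{p}\, \pnorm[p]{u}^p\right) - \frac{R^{p^\ast}}{p^\ast}\, \pnorm[p^\ast]{u}^{p^\ast} \le R^p \left(1 + \frac{\lambda M}{p}\right) - \frac{c\, R^{p^\ast}}{p^\ast}
\]
for every $u \in A$. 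Since $p^\ast > p$, the right-hand side tends to $- \infty$ as $R \to \infty$, so any sufficiently large $R$ forces $\sup_{u \in A}\, E_0(Ru) \le 0$.

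The main conceptual step is the compactness of $A$, which rests on the disjoint-supports observation ruling out $(1 - \tau)\, v + \tau w = 0$ and thereby ensuring that the denominator in $\pi_\M$ does not vanish along the segment from $v$ to $w$. Apart from that, the argument reduces to the dominance of $-R^{p^\ast}$ over $R^p$ as $R \to \infty$, and I do not anticipate a real obstacle.
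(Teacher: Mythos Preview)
Your argument is correct. Both you and the paper reduce the claim to showing that $\pnorm[p^\ast]{u}$ is bounded away from zero on $A$, and then use that the $R^{p^\ast}$ term dominates $R^p$; the difference lies only in how that lower bound is obtained. You obtain it by observing that $A$ is compact (as the continuous image of $C \times [0,1]$, since the disjoint-support observation keeps $(1-\tau)v + \tau w$ away from $0$) and that $u \mapsto \pnorm[p^\ast]{u}$ is continuous and strictly positive on $A \subset \M$. The paper instead computes explicitly: using that $v$ and $w$ have disjoint supports and lie in $\M$, it shows
\[
\pnorm[p]{u}^p = \frac{(1-\tau)^p \pnorm[p]{v}^p + \tau^p \pnorm[p]{w}^p}{(1-\tau)^p + \tau^p} \ge \min\{\pnorm[p]{v}^p,\pnorm[p]{w}^p\},
\]
then bounds $\pnorm[p]{v}^p \ge p/(\lambda_k + c_1 \rho^{N-p})$ from the inclusion $C \subset \Psi^{\lambda_k + c_1 \rho^{N-p}}$, and passes to $\pnorm[p^\ast]{u}$ by H\"older. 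Your compactness route is shorter and perfectly adequate for this lemma; the paper's explicit route has the side benefit that the same lower bound on $\pnorm[p]{v}$ (and its $L^{p^\ast}$ consequence) is reused in the proof of Lemma~\ref{Lemma 2.5}. A minor remark: since $\lambda > 0$, the term $-\dfrac{\lambda}{p}\pnorm[p]{u}^p$ is already $\le 0$, so your constant $M$ is not needed and you can bound $E_0(Ru) \le R^p - \dfrac{c}{p^\ast} R^{p^\ast}$ directly, as the paper does.
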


\begin{proof}
Let $u = \pi_\M((1 - \tau)\, v + \tau w) \in A$. For $R > 0$,
\[
E_0(Ru) \le \int_\Omega \left(\frac{R^p}{p}\, |\nabla u|^p - \frac{R^{p^\ast}}{p^\ast}\, |u|^{p^\ast}\right) dx = R^p - \frac{R^{p^\ast}}{p^\ast} \pnorm[p^\ast]{u}^{p^\ast},
\]
so it suffices to show that $\pnorm[p^\ast]{u}$ is bounded away from zero on $A$. By the H\"{o}lder inequality, it is enough to show that $\pnorm[p]{u}$ is bounded away from zero. Since $v, w \in \M$ have disjoint supports,
\begin{multline*}
\pnorm[p]{u}^p = \frac{p \pnorm[p]{(1 - \tau)\, v + \tau w}^p}{\norm{(1 - \tau)\, v + \tau w}^p} = \frac{p \left[(1 - \tau)^p \pnorm[p]{v}^p + \tau^p \pnorm[p]{w}^p\right]}{(1 - \tau)^p \norm{v}^p + \tau^p \norm{w}^p} = \frac{(1 - \tau)^p \pnorm[p]{v}^p + \tau^p \pnorm[p]{w}^p}{(1 - \tau)^p + \tau^p}\\[10pt]
\ge \min \set{\pnorm[p]{v}^p,\pnorm[p]{w}^p},
\end{multline*}
so it suffices to show that $\pnorm[p]{v}$ is bounded away from zero on $C$. Since $C \subset \Psi^{\lambda_k + c_1\, \rho^{N-p}}$ by Lemma \ref{Lemma 2.3}, we have
\[
\pnorm[p]{v}^p = \frac{p}{\Psi(v)} \ge \frac{p}{\lambda_k + c_1\, \rho^{N-p}}. \QED
\]
\end{proof}

Let $\delta \in (0,\lambda_{k+1} - \lambda)$, let $\rho \in (0,\rho_0/2]$ be so small that $\lambda_k + c_1\, \rho^{N-p} < \lambda + \delta$, and let $C_\delta = C$. Then $C_\delta$ is a compact symmetric subset of $\Psi^{\lambda + \delta}$ with $i(C_\delta) = k$ by Lemma \ref{Lemma 2.3}. We will show that if $\delta > 0$ is sufficiently small, then $\exists\, w_\delta \in \M \setminus C_\delta$ with support in $\closure{B_{\rho/2}(0)}$ such that, setting $A_\delta = \set{\pi_\M((1 - \tau)\, v + \tau w_\delta) : v \in C_\delta,\, 0 \le \tau \le 1}$, we have
\begin{equation} \label{2.22}
\sup_{u \in A_\delta,\, t \ge 0}\, E_0(tu) < \frac{1}{N}\, S_{N,\,p}^{N/p}.
\end{equation}
Then Lemma \ref{Lemma 2.4} will give an $R > 0$ such that \eqref{2.19} holds and complete the proof. We note that \eqref{2.22} is equivalent to
\begin{equation} \label{2.23}
\sup_{v \in C_\delta,\, t, \tau \ge 0}\, E_0(tv + \tau w_\delta) < \frac{1}{N}\, S_{N,\,p}^{N/p}.
\end{equation}

To choose $w_\delta$, recall that the infimum in \eqref{1.15} is attained by the Aubin-Talenti functions
\[
u_\eps(x) = \frac{c_{N,p}\, \eps^{(N-p)/p^2}}{\big(\eps + |x|^{p/(p-1)}\big)^{(N-p)/p}}, \quad \eps > 0
\]
when $\Omega = \R^N$, where the constant $c_{N,p} > 0$ is chosen so that
\[
\int_{\R^N} |\nabla u_\eps|^p\, dx = \int_{\R^N} u_\eps^{p^\ast} dx = S_{N,\,p}^{N/p}.
\]
Let $\zeta : [0,\infty) \to [0,1]$ be a smooth function such that $\zeta(t) = 1$ for $t \le 1/4$ and $\zeta(t) = 0$ for $t \ge 1/2$, and let
\[
u_{\eps,\rho}(x) = \zeta\bigg(\frac{|x|}{\rho}\bigg)\, u_\eps(x), \quad w_{\eps,\rho}(x) = \frac{u_{\eps,\rho}(x)}{\left(\dint_{\R^N} u_{\eps,\rho}^{p^\ast}\, dx\right)^{1/p^\ast}}, \quad 0 < \rho \le \rho_0/2.
\]
Then
\begin{equation} \label{2.24}
\int_{\R^N} w_{\eps,\rho}^{p^\ast}\, dx = 1
\end{equation}
and we have
\begin{gather}
\label{2.25} \int_{\R^N} |\nabla w_{\eps,\rho}|^p\, dx \le S_{N,\,p} + c_4\, \eps^{(N-p)/p} \rho^{-(N-p)/(p-1)},\\[10pt]
\label{2.26} \int_{\R^N} w_{\eps,\rho}^p\, dx \ge \begin{cases}
c_5\, \eps^{p-1} & \text{if } N > p^2\\[10pt]
c_5\, \eps^{p-1} \abs{\log\, (\eps \rho^{-p/(p-1)})} & \text{if } N = p^2
\end{cases}
\end{gather}
for some constants $c_4, c_5 > 0$ (see, e.g., Perera and Zou \cite{MR3810517}). Let
\[
w_\delta = \pi_\M(w_{\eps,\rho}).
\]
Since functions in $C_\delta$ have their supports in $\Omega \setminus B_{3 \rho/4}(0)$, while the support of $w_\delta$ is in $\closure{B_{\rho/2}(0)}$, $w_\delta \in \M \setminus C_\delta$. We will show that \eqref{2.23} holds if $\eps, \rho > 0$ are sufficiently small.

Inequality \eqref{2.23} is equivalent to
\begin{equation} \label{2.27}
\sup_{v \in C_\delta,\, t, \tau \ge 0}\, E_0(tv + \tau w_{\eps,\rho}) < \frac{1}{N}\, S_{N,\,p}^{N/p}.
\end{equation}
For $v \in C_\delta$ and $t, \tau \ge 0$,
\[
E_0(tv + \tau w_{\eps,\rho}) = E_0(tv) + E_0(\tau w_{\eps,\rho})
\]
since $v$ and $w_{\eps,\rho}$ have disjoint supports. So
\begin{equation} \label{2.28}
\sup_{v \in C_\delta,\, t, \tau \ge 0}\, E_0(tv + \tau w_{\eps,\rho}) = \sup_{v \in C_\delta,\, t \ge 0}\, E_0(tv) + \sup_{\tau \ge 0}\, E_0(\tau w_{\eps,\rho}).
\end{equation}

\begin{lemma} \label{Lemma 2.5}
We have
\[
\sup_{v \in C_\delta,\, t \ge 0}\, E_0(tv) \le \begin{cases}
0 & \text{if } \lambda_k + c_1\, \rho^{N-p} \le \lambda < \lambda_{k+1}\\[10pt]
c_6\, \rho^{N(N-p)/p} & \text{if } \lambda = \lambda_k,
\end{cases}
\]
where $c_1$ is as in Lemma \ref{Lemma 2.3} and $c_6 > 0$ is a constant.
\end{lemma}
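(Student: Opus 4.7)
The plan is to write $E_0(tv)$ on the manifold $\M$ in a form that isolates the role of $\Psi(v)$ and then maximize in $t$. Since $v \in \M$ means $I_p(v) = 1$, i.e., $\int_\Omega |\nabla v|^p\,dx = p$, and $\pnorm[p]{v}^p = p/\Psi(v)$, a direct substitution gives
\[
E_0(tv) = t^p\left(1 - \frac{\lambda}{\Psi(v)}\right) - \frac{t^{p^\ast}}{p^\ast}\, \pnorm[p^\ast]{v}^{p^\ast}, \quad t \ge 0, \; v \in C_\delta.
\]
By Lemma \ref{Lemma 2.3}, $C_\delta = C \subset \Psi^{\lambda_k + c_1\,\rho^{N-p}}$, so $\Psi(v) \le \lambda_k + c_1\,\rho^{N-p}$ for every $v \in C_\delta$.

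In the first case $\lambda_k + c_1\,\rho^{N-p} \le \lambda < \lambda_{k+1}$ the coefficient $1 - \lambda/\Psi(v)$ is nonpositive and the $t^{p^\ast}$ term is nonpositive, so $E_0(tv) \le 0$ for all $t \ge 0$ and $v \in C_\delta$. This is the easy case.

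The main work is the second case $\lambda = \lambda_k$. Here I would use
\[
1 - \frac{\lambda_k}{\Psi(v)} \le 1 - \frac{\lambda_k}{\lambda_k + c_1\,\rho^{N-p}} \le \frac{c_1}{\lambda_k}\, \rho^{N-p},
\]
so $E_0(tv) \le a\, t^p - b_v\, t^{p^\ast}$ with $a = (c_1/\lambda_k)\,\rho^{N-p}$ and $b_v = \pnorm[p^\ast]{v}^{p^\ast}/p^\ast$. I would next show that $b_v$ is bounded away from $0$ uniformly on $C_\delta$: from $\pnorm[p]{v}^p = p/\Psi(v) \ge p/(\lambda_k + c_1\,\rho^{N-p})$ and the H\"older inequality $\pnorm[p]{v} \le \vol{\Omega}^{(p^\ast - p)/(p\, p^\ast)} \pnorm[p^\ast]{v}$, we obtain $\pnorm[p^\ast]{v}^{p^\ast} \ge c_7 > 0$ for some constant $c_7$ depending only on $\Omega$, $p$, $N$, $\lambda_k$, and an upper bound for $\rho$. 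A standard one-variable maximization yields
\[
\sup_{t \ge 0}\, (a\, t^p - b_v\, t^{p^\ast}) = \frac{p^\ast - p}{p^\ast}\left(\frac{p\, a}{p^\ast\, b_v}\right)^{\!\! p/(p^\ast - p)}\! a,
\]
and using $p^\ast/(p^\ast - p) = N/p$ (equivalently $1 + p/(p^\ast - p) = N/p$) this simplifies to a constant multiple of $a^{N/p} b_v^{-p/(p^\ast - p)}$. Since $a = O(\rho^{N-p})$ and $b_v^{-p/(p^\ast - p)} \le (c_7/p^\ast)^{-p/(p^\ast - p)}$, we conclude
\[
\sup_{v \in C_\delta,\, t \ge 0}\, E_0(tv) \le c_6\, \rho^{(N-p)\cdot N/p} = c_6\, \rho^{N(N-p)/p}
\]
for a suitable constant $c_6 > 0$. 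The principal obstacle is just verifying the uniform lower bound on $\pnorm[p^\ast]{v}$ on $C_\delta$; once that is in hand, the maximization in $t$ is routine and produces exactly the exponent $N(N-p)/p$.
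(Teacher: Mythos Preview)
Your proof is correct and follows essentially the same approach as the paper: write $E_0(tv)=t^p\big(1-\lambda/\Psi(v)\big)-(t^{p^\ast}/p^\ast)\pnorm[p^\ast]{v}^{p^\ast}$, use $C_\delta\subset\Psi^{\lambda_k+c_1\rho^{N-p}}$ from Lemma~\ref{Lemma 2.3} to bound the $t^p$-coefficient, bound $\pnorm[p^\ast]{v}$ away from zero via H\"older and the lower bound on $\pnorm[p]{v}$ (the paper refers to the proof of Lemma~\ref{Lemma 2.4} for this), and then maximize in $t$. The only differences are cosmetic: you spell out the one-variable maximization and the H\"older step explicitly, while the paper leaves these implicit.
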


\begin{proof}
For $v \in C_\delta$ and $t \ge 0$,
\[
E_0(tv) = \frac{t^p}{p} \int_\Omega \big(|\nabla v|^p - \lambda\, |v|^p\big)\, dx - \frac{t^{p^\ast}}{p^\ast} \int_\Omega |v|^{p^\ast} dx,
\]
and
\begin{equation} \label{2.20}
\frac{1}{p} \int_\Omega \big(|\nabla v|^p - \lambda\, |v|^p\big)\, dx = 1 - \frac{\lambda}{\Psi(v)} \le 1 - \frac{\lambda}{\lambda_k + c_1\, \rho^{N-p}}
\end{equation}
since $C_\delta \subset \Psi^{\lambda_k + c_1\, \rho^{N-p}}$ by Lemma \ref{Lemma 2.3}. So $E_0(tv) \le 0$ if $\lambda_k + c_1\, \rho^{N-p} \le \lambda < \lambda_{k+1}$. If $\lambda = \lambda_k$, then
\begin{equation} \label{2.21}
\frac{1}{p} \int_\Omega \big(|\nabla v|^p - \lambda\, |v|^p\big)\, dx \le \frac{c_1\, \rho^{N-p}}{\lambda_k + c_1\, \rho^{N-p}} \le c_7\, \rho^{N-p},
\end{equation}
where $c_7 = c_1/\lambda_ k > 0$, and
\[
\frac{1}{p^\ast} \int_\Omega |v|^{p^\ast} dx \ge c_8
\]
for some constant $c_8 > 0$ as in the proof of Lemma \ref{Lemma 2.4}, so
\[
E_0(tv) \le c_7\, \rho^{N-p}\, t^p - c_8\, t^{p^\ast}
\]
and maximizing the right-hand side over all $t \ge 0$ gives the desired conclusion.
\end{proof}

\begin{lemma} \label{Lemma 2.5.1}
We have
\[
\sup_{\tau \ge 0}\, E_0(\tau w_{\eps,\rho}) \le \begin{cases}
\dfrac{1}{N} \left[S_{N,\,p} + c_4\, \eps^{(N-p)/p} \rho^{-(N-p)/(p-1)} - \lambda c_5\, \eps^{p-1}\right]^{N/p} & \text{if } N > p^2\\[10pt]
\dfrac{1}{N} \left[S_{N,\,p} + c_4\, \eps^{p-1} \rho^{-p} - \lambda c_5\, \eps^{p-1} \abs{\log\, (\eps \rho^{-p/(p-1)})}\right]^{N/p} & \text{if } N = p^2.
\end{cases}
\]
\end{lemma}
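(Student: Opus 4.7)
The plan is to compute the supremum over $\tau \ge 0$ explicitly as a function of the Rayleigh-type quantity
\[
\alpha_{\eps,\rho} := \int_{\R^N} |\nabla w_{\eps,\rho}|^p\, dx - \lambda \int_{\R^N} w_{\eps,\rho}^p\, dx,
\]
and then substitute the estimates \eqref{2.25} and \eqref{2.26}.

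First, since $\supp w_{\eps,\rho} \subset \closure{B_{\rho/2}(0)} \subset \Omega$, using the normalization \eqref{2.24} we have
\[
E_0(\tau w_{\eps,\rho}) = \frac{\tau^p}{p}\, \alpha_{\eps,\rho} - \frac{\tau^{p^\ast}}{p^\ast}, \quad \tau \ge 0.
\]
If $\alpha_{\eps,\rho} \le 0$ the supremum equals $0$ and the asserted bound is trivial once the bracketed expression is nonnegative, which it will be for $\eps,\rho$ small. If $\alpha_{\eps,\rho} > 0$, differentiating in $\tau$ and solving $\tau^{p-1}\, \alpha_{\eps,\rho} = \tau^{p^\ast - 1}$ gives the unique positive critical point $\tau_\ast = \alpha_{\eps,\rho}^{1/(p^\ast - p)}$. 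Using the standard identities $\frac{1}{p} - \frac{1}{p^\ast} = \frac{1}{N}$ and $\frac{p^\ast}{p^\ast - p} = \frac{N}{p}$, a direct computation yields
\[
\sup_{\tau \ge 0}\, E_0(\tau w_{\eps,\rho}) = \frac{1}{N}\, \alpha_{\eps,\rho}^{N/p}.
\]

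It then remains to insert the two estimates \eqref{2.25} and \eqref{2.26} into $\alpha_{\eps,\rho}$. For $N > p^2$ this gives directly
\[
\alpha_{\eps,\rho} \le S_{N,\,p} + c_4\, \eps^{(N-p)/p} \rho^{-(N-p)/(p-1)} - \lambda c_5\, \eps^{p-1},
\]
and for $N = p^2$ the identities $(N-p)/p = p-1$ and $(N-p)/(p-1) = p$ turn the exponent on $\eps$ from \eqref{2.25} into $p-1$ and the exponent on $\rho$ into $-p$, so
\[
\alpha_{\eps,\rho} \le S_{N,\,p} + c_4\, \eps^{p-1} \rho^{-p} - \lambda c_5\, \eps^{p-1} \abs{\log\, (\eps \rho^{-p/(p-1)})}.
\]
Raising to the power $N/p$ and dividing by $N$ yields the two cases stated in the lemma.

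There is no serious obstacle here: the monotonicity $x \mapsto x^{N/p}$ on $[0,\infty)$ lets us propagate the upper bound on $\alpha_{\eps,\rho}$ through the optimization, and the only care needed is the case $\alpha_{\eps,\rho} \le 0$, which is handled trivially. The substantive content of the lemma lies not in this computation but in the estimates \eqref{2.25}--\eqref{2.26} already cited from \cite{MR3810517}; this lemma simply packages them into the form used to verify \eqref{2.27}.
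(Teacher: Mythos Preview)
Your proof is correct and follows essentially the same route as the paper: write $E_0(\tau w_{\eps,\rho}) = \frac{\tau^p}{p}\,\alpha_{\eps,\rho} - \frac{\tau^{p^\ast}}{p^\ast}$ using the normalization \eqref{2.24}, maximize in $\tau$ to obtain $\frac{1}{N}\,\alpha_{\eps,\rho}^{N/p}$, and then invoke \eqref{2.25}--\eqref{2.26}. You are in fact slightly more careful than the paper in treating the case $\alpha_{\eps,\rho}\le 0$ and in justifying that integrals over $\Omega$ and $\R^N$ agree via the support condition.
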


\begin{proof}
We have
\[
E_0(\tau w_{\eps,\rho}) = \frac{\tau^p}{p} \int_\Omega \left(|\nabla w_{\eps,\rho}|^p - \lambda w_{\eps,\rho}^p\right) dx - \frac{\tau^{p^\ast}}{p^\ast}
\]
by \eqref{2.24}, and maximizing the right-hand side over all $\tau \ge 0$ gives
\[
\sup_{\tau \ge 0}\, E_0(\tau w_{\eps,\rho}) = \frac{1}{N} \left[\int_\Omega \left(|\nabla w_{\eps,\rho}|^p - \lambda w_{\eps,\rho}^p\right) dx\right]^{N/p},
\]
so the desired conclusion follows from \eqref{2.25} and \eqref{2.26}.
\end{proof}

We can now complete the proof of Theorem \ref{Theorem 1.6}. First suppose $N \ge p^2$ and $\lambda > \lambda_1$ is not an eigenvalue. Then $\lambda_k < \lambda < \lambda_{k+1}$ for some $k \in \N$. Let $\rho \in (0,\rho_0/2]$ be so small that $\lambda_k + c_1\, \rho^{N-p} \le \lambda$. Then \eqref{2.27} follows from \eqref{2.28}, Lemma \ref{Lemma 2.5}, and Lemma \ref{Lemma 2.5.1} for sufficiently small $\eps > 0$.

Now suppose $N\, (N - p^2) > p^2$ and $\lambda \ge \lambda_1$. Then $\lambda_k \le \lambda < \lambda_{k+1}$ for some $k \in \N$. We have already considered the case where $N > p^2$ and $\lambda_k < \lambda < \lambda_{k+1}$, so suppose $\lambda = \lambda_k$. Then
\[
\sup_{v \in C_\delta,\, t, \tau \ge 0}\, E_0(tv + \tau w_{\eps,\rho}) \le \frac{1}{N} \left[S_{N,\,p} + c_4\, \eps^{(N-p)/p} \rho^{-(N-p)/(p-1)} - \lambda c_5\, \eps^{p-1}\right]^{N/p} + c_6\, \rho^{N(N-p)/p}
\]
by \eqref{2.28}, Lemma \ref{Lemma 2.5}, and Lemma \ref{Lemma 2.5.1}. Set $\rho = \eps^\alpha$, where $\alpha > 0$ is to be chosen. Then the right-hand side is less than or equal to
\[
\frac{1}{N}\, S_{N,\,p}^{N/p} \left[1 + c_9\, \eps^{(N-p)[1/p - \alpha/(p-1)]} - c_{10}\, \eps^{p-1}\right]^{N/p} + c_6\, \eps^{\alpha N(N-p)/p}
\]
for some constants $c_9, c_{10} > 0$, so \eqref{2.27} will follow for sufficiently small $\eps > 0$ if $\alpha$ can be found so that
\[
(N - p)[1/p - \alpha/(p - 1)] > p - 1
\]
and
\[
\alpha N(N - p)/p > p - 1.
\]
This is possible if and only if
\[
(p - 1)\, p/N(N - p) < (p - 1)[1/p - (p - 1)/(N - p)],
\]
i.e.,
\[
N\, (N - p^2) > p^2. \hquad \qedsymbol
\]

\subsection{Proof of Theorem \ref{Theorem 1.7}}

We prove Theorem \ref{Theorem 1.7} by applying Theorem \ref{Theorem 1.5} with $W = W^{1,\,p}_0(\Omega)$, the operators $\Ap, \Bp, f \linebreak
\in C(W^{1,\,p}_0(\Omega),W^{-1,\,p'}(\Omega))$ and $h \in W^{-1,\,p'}(\Omega)$ given by
\begin{multline*}
\dualp{\Ap[u]}{v} = \int_\Omega |\nabla u|^{p-2}\, \nabla u \cdot \nabla v\, dx, \quad \dualp{\Bp[u]}{v} = \int_\Omega |u|^{p-2}\, uv\, dx,\\[10pt]
\dualp{f(u)}{v} = \int_\Omega \frac{|u|^{p^\ast(\sigma) - 2}}{|x|^\sigma}\, uv\, dx, \quad u, v \in W^{1,\,p}_0(\Omega)
\end{multline*}
and
\[
\dualp{h}{v} = \int_\Omega h(x)\, v\, dx, \quad v \in W^{1,\,p}_0(\Omega),
\]
and $g = 0$. The proof is similar to that of Theorem \ref{Theorem 1.6}, so we will be sketchy.

\begin{lemma} \label{Lemma 2.6}
There exists $\kappa > 0$ such that the functional $E$ in \eqref{1.17} satisfies the {\em \PS{c}} condition for all
\begin{equation} \label{2.31}
c < \frac{p - \sigma}{(N - \sigma)\, p}\, S_{N,\,p,\,\sigma}^{(N - \sigma)/(p - \sigma)} - \kappa \pnorm[p^\ast(\sigma)']{h}^{p^\ast(\sigma)'}.
\end{equation}
\end{lemma}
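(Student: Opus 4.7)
The proof follows the template of Lemma \ref{Lemma 2.2}, with the Hardy-Sobolev inequality replacing the ordinary Sobolev inequality and the weighted critical integral replacing the unweighted one; the absence of the subcritical $g$-term simplifies the bookkeeping.

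Given a \PS{c} sequence $\seq{u_j}$ for $E$, I would first establish boundedness in $W^{1,\,p}_0(\Omega)$ by combining $E(u_j)$ with $\tfrac{1}{r}\dualp{E'(u_j)}{u_j}$ for some $r \in (p,\,p^\ast(\sigma))$, exactly as in Lemma \ref{Lemma 2.2}. A renamed subsequence then converges weakly to some $u \in W^{1,\,p}_0(\Omega)$, strongly in $L^t(\Omega)$ for every $t \in [1,p^\ast)$, and pointwise a.e. Set $\widetilde u_j = u_j - u$. The standard Brezis-Lieb lemma \cite{MR699419} handles the $W^{1,\,p}_0$-norm, and a weighted version of the same lemma handles the Hardy-Sobolev integral:
\[
\int_\Omega \frac{|u_j|^{p^\ast(\sigma)}}{|x|^\sigma}\, dx = \int_\Omega \frac{|\widetilde u_j|^{p^\ast(\sigma)}}{|x|^\sigma}\, dx + \int_\Omega \frac{|u|^{p^\ast(\sigma)}}{|x|^\sigma}\, dx + \o(1);
\]
this weighted decomposition follows from the pointwise a.e.\ convergence together with boundedness in the weighted $L^{p^\ast(\sigma)}(\Omega,|x|^{-\sigma})$-space guaranteed by the Hardy-Sobolev inequality.

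Testing $E'(u_j)$ against $u_j$ and against $u$ and subtracting, using the compactness of the subcritical term $\dualp{\Bp[u_j]}{\cdot}$ and the two Brezis-Lieb decompositions, yields
\[
\norm{\widetilde u_j}^p = \int_\Omega \frac{|\widetilde u_j|^{p^\ast(\sigma)}}{|x|^\sigma}\, dx + \o(1).
\]
The Hardy-Sobolev inequality $\int_\Omega |w|^{p^\ast(\sigma)}/|x|^\sigma\,dx \le S_{N,\,p,\,\sigma}^{-p^\ast(\sigma)/p}\,\norm{w}^{p^\ast(\sigma)}$ then forces the dichotomy that either $\widetilde u_j \to 0$ in $W^{1,\,p}_0(\Omega)$, or $\norm{\widetilde u_j}^p \ge S_{N,\,p,\,\sigma}^{(N-\sigma)/(p-\sigma)} + \o(1)$.

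To close the argument I would substitute the Brezis-Lieb decompositions into $E(u_j) = c + \o(1)$ and use the weak-limit identity $\norm{u}^p - \lambda\,\pnorm[p]{u}^p = \int_\Omega |u|^{p^\ast(\sigma)}/|x|^\sigma\,dx + \int_\Omega h u\,dx$ (obtained by passing $v = u$ in the limit of $\dualp{E'(u_j)}{v} = \o(1)$) to eliminate $\norm{u}^p$, arriving at
\[
c = \frac{p-\sigma}{(N-\sigma)\,p}\left[\norm{\widetilde u_j}^p + \int_\Omega \frac{|u|^{p^\ast(\sigma)}}{|x|^\sigma}\,dx\right] - \left(1 - \frac{1}{p}\right)\int_\Omega h u\,dx + \o(1).
\]
The remaining $h$-term is estimated by the weighted H\"older inequality: writing $hu = (h\,|x|^{\sigma/p^\ast(\sigma)})(u\,|x|^{-\sigma/p^\ast(\sigma)})$ and noting that the multiplier $|x|^{\sigma/(p^\ast(\sigma)-1)}$ is bounded on $\Omega$, one obtains $\left|\int_\Omega hu\,dx\right| \le C\,\pnorm[p^\ast(\sigma)']{h}\bigl(\int_\Omega |u|^{p^\ast(\sigma)}/|x|^\sigma\,dx\bigr)^{1/p^\ast(\sigma)}$, and Young's inequality with conjugate exponents $p^\ast(\sigma)'$ and $p^\ast(\sigma)$ absorbs the weighted $u$-integral into the positive term already present, leaving a remainder bounded below by $-\kappa\,\pnorm[p^\ast(\sigma)']{h}^{p^\ast(\sigma)'}$. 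Consequently $\norm{\widetilde u_j}^p \le \tfrac{(N-\sigma)p}{p-\sigma}\bigl[c + \kappa\,\pnorm[p^\ast(\sigma)']{h}^{p^\ast(\sigma)'}\bigr] + \o(1)$, and the hypothesis \eqref{2.31} eliminates the second alternative of the dichotomy, forcing $\widetilde u_j \to 0$. The main technical point is securing the weighted Brezis-Lieb identity and verifying that the exponents in the absorption give precisely the power $p^\ast(\sigma)'$ on $\pnorm[p^\ast(\sigma)']{h}$ matching the threshold; otherwise the argument is a direct adaptation of the Lemma \ref{Lemma 2.2} calculation.
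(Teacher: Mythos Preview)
Your proposal is correct and follows essentially the same route as the paper's proof: boundedness via the $E(u_j)-\tfrac{1}{r}\dualp{E'(u_j)}{u_j}$ combination, the weighted Br\'ezis--Lieb splitting (which the paper attributes to Ghoussoub--Yuan \cite[Lemma~4.3]{MR1695021}), the identity $\norm{\widetilde u_j}^p = \int_\Omega |\widetilde u_j|^{p^\ast(\sigma)}/|x|^\sigma\,dx + \o(1)$, and the weighted H\"older/Young absorption of the $h$-term with the same exponent bookkeeping. The only cosmetic difference is that the paper phrases the dichotomy as the single inequality \eqref{2.38} rather than as two explicit alternatives.
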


\begin{proof}
Let $c \in \R$ and let $\seq{u_j}$ be a sequence in $W^{1,\,p}_0(\Omega)$ such that
\begin{equation} \label{2.32}
E(u_j) = \int_\Omega \left(\frac{1}{p}\, |\nabla u_j|^p - \frac{\lambda}{p}\, |u_j|^p - \frac{1}{p^\ast(\sigma)}\, \frac{|u_j|^{p^\ast(\sigma)}}{|x|^\sigma} - h(x)\, u_j\right) dx = c + \o(1)
\end{equation}
and
\begin{multline} \label{2.33}
\dualp{E'(u_j)}{v} = \int_\Omega \left(|\nabla u_j|^{p-2}\, \nabla u_j \cdot \nabla v - \lambda\, |u_j|^{p-2}\, u_j\, v - \frac{|u_j|^{p^\ast(\sigma) - 2}}{|x|^\sigma}\, u_j\, v - h(x)\, v\right) dx\\[10pt]
= \o(\norm{v}) \quad \forall v \in W^{1,\,p}_0(\Omega).
\end{multline}
Taking $v = u_j$ in \eqref{2.33} gives
\begin{equation} \label{2.34}
\int_\Omega \left(|\nabla u_j|^p - \lambda\, |u_j|^p - \frac{|u_j|^{p^\ast(\sigma)}}{|x|^\sigma} - h(x)\, u_j\right) dx = \o(\norm{u_j}).
\end{equation}
Let $r \in (p,p^\ast(\sigma))$. Dividing \eqref{2.34} by $r$ and subtracting from \eqref{2.32} gives
\begin{multline*}
\hspace{-6.64pt} \int_\Omega \left[\left(\frac{1}{p} - \frac{1}{r}\right) |\nabla u_j|^p - \lambda \left(\frac{1}{p} - \frac{1}{r}\right) |u_j|^p + \left(\frac{1}{r} - \frac{1}{p^\ast(\sigma)}\right) \frac{|u_j|^{p^\ast(\sigma)}}{|x|^\sigma} - \left(1 - \frac{1}{r}\right) h(x)\, u_j\right] dx\\[10pt]
= c + \o(1) + \o(\norm{u_j}),
\end{multline*}
and it follows from this that $\seq{u_j}$ is bounded. So a renamed subsequence converges to some $u$ weakly in $W^{1,\,p}_0(\Omega)$, strongly in $L^t(\Omega)$ for all $t \in [1,p^\ast)$, and a.e.\! in $\Omega$. Setting $\widetilde{u}_j = u_j - u$, we will show that $\widetilde{u}_j \to 0$ in $W^{1,\,p}_0(\Omega)$.

Equation \eqref{2.34} gives
\begin{equation} \label{2.35}
\norm{u_j}^p = \int_\Omega \frac{|u_j|^{p^\ast(\sigma)}}{|x|^\sigma}\, dx + \int_\Omega \left(\lambda\, |u|^p + h(x)\, u\right) dx + \o(1).
\end{equation}
Taking $v = u$ in \eqref{2.33} and passing to the limit gives
\begin{equation} \label{2.36}
\norm{u}^p = \int_\Omega \frac{|u|^{p^\ast(\sigma)}}{|x|^\sigma}\, dx + \int_\Omega \left(\lambda\, |u|^p + h(x)\, u\right) dx.
\end{equation}
Since
\begin{equation} \label{2.37}
\norm{\widetilde{u}_j}^p = \norm{u_j}^p - \norm{u}^p + \o(1)
\end{equation}
by the Br{\'e}zis-Lieb lemma \cite[Theorem 1]{MR699419} and
\[
\int_\Omega \frac{|\widetilde{u}_j|^{p^\ast(\sigma)}}{|x|^\sigma}\, dx = \int_\Omega \frac{|u_j|^{p^\ast(\sigma)}}{|x|^\sigma}\, dx - \int_\Omega \frac{|u|^{p^\ast(\sigma)}}{|x|^\sigma}\, dx + \o(1)
\]
by Ghoussoub and Yuan \cite[Lemma 4.3]{MR1695021}, \eqref{2.35} and \eqref{2.36} imply
\[
\norm{\widetilde{u}_j}^p = \int_\Omega \frac{|\widetilde{u}_j|^{p^\ast(\sigma)}}{|x|^\sigma}\, dx + \o(1) \le \frac{\norm{\widetilde{u}_j}^{p^\ast(\sigma)}}{S_{N,\,p,\,\sigma}^{p^\ast(\sigma)/p}} + \o(1),
\]
so
\begin{equation} \label{2.38}
\norm{\widetilde{u}_j}^p \left(S_{N,\,p,\,\sigma}^{(N - \sigma)/(N-p)} - \norm{\widetilde{u}_j}^{(p - \sigma)\,p/(N-p)}\right) \le \o(1).
\end{equation}
On the other hand, \eqref{2.32} gives
\[
c = \frac{1}{p} \norm{u_j}^p - \frac{1}{p^\ast(\sigma)} \int_\Omega \frac{|u_j|^{p^\ast(\sigma)}}{|x|^\sigma}\, dx - \int_\Omega \left(\frac{\lambda}{p}\, |u|^p + h(x)\, u\right) dx + \o(1),
\]
and a straightforward calculation combining this with \eqref{2.35}--\eqref{2.37} gives
\[
c = \frac{p - \sigma}{(N - \sigma)\, p} \norm{\widetilde{u}_j}^p + \int_\Omega \left[\frac{p - \sigma}{(N - \sigma)\, p}\, \frac{|u|^{p^\ast(\sigma)}}{|x|^\sigma} - \left(1 - \frac{1}{p}\right) h(x)\, u\right] dx + \o(1).
\]
The integral on the right-hand side is greater than or equal to
\begin{multline*}
\frac{p - \sigma}{(N - \sigma)\, p} \int_\Omega \frac{|u|^{p^\ast(\sigma)}}{|x|^\sigma}\, dx - \left(1 - \frac{1}{p}\right) \left(\int_\Omega |x|^{\sigma\, p^\ast(\sigma)'/p^\ast(\sigma)}\, |h(x)|^{p^\ast(\sigma)'}\, dx\right)^{1/p^\ast(\sigma)'}\\[10pt]
\left(\int_\Omega \frac{|u|^{p^\ast(\sigma)}}{|x|^\sigma}\, dx\right)^{1/p^\ast(\sigma)} \ge - \kappa \pnorm[p^\ast(\sigma)']{h}^{p^\ast(\sigma)'}
\end{multline*}
for some $\kappa > 0$ by the H\"{o}lder and Young's inequalities, so
\[
\norm{\widetilde{u}_j}^p \le \frac{(N - \sigma)\, p}{p - \sigma} \left(c + \kappa \pnorm[p^\ast(\sigma)']{h}^{p^\ast(\sigma)'}\right) + \o(1).
\]
Combining this with \eqref{2.38} shows that $\widetilde{u}_j \to 0$ when \eqref{2.31} holds.
\end{proof}

We will apply Theorem \ref{Theorem 1.5} with
\[
c_{\mu,\,h}^\ast = \frac{p - \sigma}{(N - \sigma)\, p}\, S_{N,\,p,\,\sigma}^{(N - \sigma)/(p - \sigma)} - \kappa \pnorm[p^\ast(\sigma)']{h}^{p^\ast(\sigma)'},
\]
where $\kappa > 0$ is as in Lemma \ref{Lemma 2.6}, noting that
\[
\lim_{\pnorm[p^\ast(\sigma)']{h} \to 0}\, c_{\mu,\,h}^\ast = \frac{p - \sigma}{(N - \sigma)\, p}\, S_{N,\,p,\,\sigma}^{(N - \sigma)/(p - \sigma)}.
\]
We have
\begin{gather*}
\M = \set{u \in W^{1,\,p}_0(\Omega) : \norm{u}^p = p},\\[10pt]
\Psi(u) = \frac{p}{\pnorm[p]{u}^p}, \quad u \in \M,\\[10pt]
\pi_\M(u) = \frac{p^{1/p}\, u}{\norm{u}}, \quad u \in W^{1,\,p}_0(\Omega) \minuszero
\end{gather*}
as in the proof of Theorem \ref{Theorem 1.6} and
\[
E_0(u) = \int_\Omega \left(\frac{1}{p}\, |\nabla u|^p - \frac{\lambda}{p}\, |u|^p - \frac{1}{p^\ast(\sigma)}\, \frac{|u|^{p^\ast(\sigma)}}{|x|^\sigma}\right) dx, \quad u \in W^{1,\,p}_0(\Omega).
\]
Let $\lambda_k \le \lambda < \lambda_{k+1}$. We need to show that there exist $R > 0$ and, for all sufficiently small $\delta > 0$, a compact symmetric subset $C_\delta$ of $\Psi^{\lambda + \delta}$ with $i(C_\delta) = k$ and $w_\delta \in \M \setminus C_\delta$ such that, setting $A_\delta = \set{\pi_\M((1 - \tau)\, v + \tau w_\delta) : v \in C_\delta,\, 0 \le \tau \le 1}$, we have
\begin{equation} \label{2.39}
\sup_{u \in A_\delta}\, E_0(Ru) \le 0, \qquad \sup_{u \in A_\delta,\, 0 \le t \le R}\, E_0(tu) < \frac{p - \sigma}{(N - \sigma)\, p}\, S_{N,\,p,\,\sigma}^{(N - \sigma)/(p - \sigma)}.
\end{equation}
Let $\rho_0 = \dist{0}{\bdry{\Omega}}$, let $0 < \rho \le \rho_0/2$, and let $C$ be as in the proof of Theorem \ref{Theorem 1.6}.

\begin{lemma} \label{Lemma 2.7}
For any $w \in \M \setminus C$ with support in $\closure{B_{\rho/2}(0)}$, $\exists\, R > 0$ such that, setting $A = \set{\pi_\M((1 - \tau)\, v + \tau w) : v \in C,\, 0 \le \tau \le 1}$, we have
\[
\sup_{u \in A}\, E_0(Ru) \le 0.
\]
\end{lemma}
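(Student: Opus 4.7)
The plan is to mirror the proof of Lemma \ref{Lemma 2.4} almost verbatim, inserting one weighted H\"{o}lder step to trade the Hardy--Sobolev integral for an ordinary $L^p$ norm. First, for $u = \pi_\M((1 - \tau)\, v + \tau w) \in A$, since $\lambda > 0$ and $\int_\Omega |\nabla u|^p\, dx = p$ (because $u \in \M$), one has
\[
E_0(Ru) \le R^p - \frac{R^{p^\ast(\sigma)}}{p^\ast(\sigma)} \int_\Omega \frac{|u|^{p^\ast(\sigma)}}{|x|^\sigma}\, dx,
\]
so the task reduces to showing that the weighted integral on the right is bounded below by a positive constant uniformly on $A$; once that is established, choosing $R$ sufficiently large yields the desired inequality.

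The next step is to apply H\"{o}lder with conjugate exponents $p^\ast(\sigma)/p$ and $p^\ast(\sigma)/(p^\ast(\sigma) - p)$, which gives
\[
\int_\Omega |u|^p\, dx \le \left(\int_\Omega \frac{|u|^{p^\ast(\sigma)}}{|x|^\sigma}\, dx\right)^{p/p^\ast(\sigma)} \left(\int_\Omega |x|^{\sigma\, p/(p^\ast(\sigma) - p)}\, dx\right)^{(p^\ast(\sigma) - p)/p^\ast(\sigma)},
\]
whose second factor is finite since $\sigma > 0$ and $\Omega$ is bounded. Thus it suffices to bound $\pnorm[p]{u}$ below on $A$.

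At this point I would reproduce the disjoint-supports computation from the proof of Lemma \ref{Lemma 2.4}: the functions $v \in C$, supported in $\Omega \setminus B_{3 \rho/4}(0)$, and $w$, supported in $\closure{B_{\rho/2}(0)}$, have disjoint supports, so both the Sobolev and $L^p$ norms of the convex combination split additively, giving
\[
\pnorm[p]{u}^p = \frac{(1 - \tau)^p \pnorm[p]{v}^p + \tau^p \pnorm[p]{w}^p}{(1 - \tau)^p + \tau^p} \ge \min\set{\pnorm[p]{v}^p,\, \pnorm[p]{w}^p}.
\]
Since $w \in \M$ is fixed, $\pnorm[p]{w}$ is a positive constant, and for $v \in C \subset \Psi^{\lambda_k + c_1\, \rho^{N-p}}$ Lemma \ref{Lemma 2.3} gives $\pnorm[p]{v}^p = p/\Psi(v) \ge p/(\lambda_k + c_1\, \rho^{N-p})$, which closes the reduction.

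The argument is essentially a transcription of Lemma \ref{Lemma 2.4} once the weighted H\"{o}lder step is in place, so I do not expect any real obstacle; the only point to verify is that the weight $|x|^{\sigma\, p/(p^\ast(\sigma) - p)}$ is integrable over $\Omega$, which is immediate because the exponent is strictly positive.
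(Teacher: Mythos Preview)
Your proof is correct and follows the paper's own argument essentially verbatim: the paper also drops the $\lambda$-term, applies the same weighted H\"{o}lder inequality to reduce to a lower bound on $\pnorm[p]{u}$, and then refers back to the disjoint-supports computation of Lemma \ref{Lemma 2.4}, which you have simply written out in full.
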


\begin{proof}
Let $u = \pi_\M((1 - \tau)\, v + \tau w) \in A$. For $R > 0$,
\[
E_0(Ru) \le \int_\Omega \left(\frac{R^p}{p}\, |\nabla u|^p - \frac{R^{p^\ast(\sigma)}}{p^\ast(\sigma)}\, \frac{|u|^{p^\ast(\sigma)}}{|x|^\sigma}\right) dx = R^p - \frac{R^{p^\ast(\sigma)}}{p^\ast(\sigma)} \int_\Omega \frac{|u|^{p^\ast(\sigma)}}{|x|^\sigma}\, dx,
\]
so it suffices to show that the last integral is bounded away from zero on $A$. By the H\"{o}lder inequality,
\[
\int_\Omega |u|^p\, dx \le \left(\int_\Omega |x|^{\sigma\, p/(p^\ast(\sigma) - p)}\, dx\right)^{1 - p/p^\ast(\sigma)} \left(\int_\Omega \frac{|u|^{p^\ast(\sigma)}}{|x|^\sigma}\, dx\right)^{p/p^\ast(\sigma)},
\]
and $\pnorm[p]{u}$ is bounded away from zero as in the proof of Lemma \ref{Lemma 2.4}, so the desired conclusion follows.
\end{proof}

Let $\delta \in (0,\lambda_{k+1} - \lambda)$, let $\rho \in (0,\rho_0/2]$ be so small that $\lambda_k + c_1\, \rho^{N-p} < \lambda + \delta$, and let $C_\delta = C$. Then $C_\delta$ is a compact symmetric subset of $\Psi^{\lambda + \delta}$ with $i(C_\delta) = k$ by Lemma \ref{Lemma 2.3}. We will show that if $\delta > 0$ is sufficiently small, then $\exists\, w_\delta \in \M \setminus C_\delta$ with support in $\closure{B_{\rho/2}(0)}$ such that, setting $A_\delta = \set{\pi_\M((1 - \tau)\, v + \tau w_\delta) : v \in C_\delta,\, 0 \le \tau \le 1}$, we have
\begin{equation} \label{2.40}
\sup_{u \in A_\delta,\, t \ge 0}\, E_0(tu) < \frac{p - \sigma}{(N - \sigma)\, p}\, S_{N,\,p,\,\sigma}^{(N - \sigma)/(p - \sigma)}.
\end{equation}
Then Lemma \ref{Lemma 2.7} will give an $R > 0$ such that \eqref{2.39} holds and complete the proof. We note that \eqref{2.40} is equivalent to
\begin{equation} \label{2.41}
\sup_{v \in C_\delta,\, t, \tau \ge 0}\, E_0(tv + \tau w_\delta) < \frac{p - \sigma}{(N - \sigma)\, p}\, S_{N,\,p,\,\sigma}^{(N - \sigma)/(p - \sigma)}.
\end{equation}

To choose $w_\delta$, recall that the infimum in \eqref{1.18} is attained by the family of functions
\[
u_\eps(x) = \frac{c_{N,p,\sigma}\, \eps^{(N-p)/(p - \sigma)\,p}}{\big(\eps + |x|^{(p - \sigma)/(p-1)}\big)^{(N-p)/(p - \sigma)}}, \quad \eps > 0
\]
when $\Omega = \R^N$, where the constant $c_{N,p,\sigma} > 0$ is chosen so that
\[
\int_{\R^N} |\nabla u_\eps|^p\, dx = \int_{\R^N} \frac{u_\eps^{p^\ast(\sigma)}}{|x|^\sigma}\, dx = S_{N,\,p,\,\sigma}^{(N - \sigma)/(p - \sigma)}
\]
(see \cite[Theorem 3.1.(2)]{MR1695021}). Let $\zeta : [0,\infty) \to [0,1]$ be a smooth function such that $\zeta(t) = 1$ for $t \le 1/4$ and $\zeta(t) = 0$ for $t \ge 1/2$, and let
\[
u_{\eps,\rho}(x) = \zeta\bigg(\frac{|x|}{\rho}\bigg)\, u_\eps(x), \quad w_{\eps,\rho}(x) = \frac{u_{\eps,\rho}(x)}{\left(\dint_{\R^N} \frac{u_{\eps,\rho}^{p^\ast(\sigma)}}{|x|^\sigma}\, dx\right)^{1/p^\ast(\sigma)}}, \quad 0 < \rho \le \rho_0/2.
\]
Then
\begin{equation} \label{2.42}
\int_{\R^N} \frac{w_{\eps,\rho}^{p^\ast(\sigma)}}{|x|^\sigma}\, dx = 1
\end{equation}
and we have
\begin{gather}
\label{2.43} \int_{\R^N} |\nabla w_{\eps,\rho}|^p\, dx \le S_{N,\,p,\,\sigma} + c_{11}\, \eps^{(N-p)/(p - \sigma)} \rho^{-(N-p)/(p-1)},\\[10pt]
\label{2.44} \int_{\R^N} w_{\eps,\rho}^p\, dx \ge \begin{cases}
c_{12}\, \eps^{(p-1)\,p/(p - \sigma)} & \text{if } N > p^2\\[10pt]
c_{12}\, \eps^{(p-1)\,p/(p - \sigma)} \abs{\log\, (\eps \rho^{-(p - \sigma)/(p-1)})} & \text{if } N = p^2
\end{cases}
\end{gather}
for some constants $c_{11}, c_{12} > 0$ (see Perera and Zou \cite{MR3810517}). Let
\[
w_\delta = \pi_\M(w_{\eps,\rho}).
\]
Since functions in $C_\delta$ have their supports in $\Omega \setminus B_{3 \rho/4}(0)$, while the support of $w_\delta$ is in $\closure{B_{\rho/2}(0)}$, $w_\delta \in \M \setminus C_\delta$. We will show that \eqref{2.41} holds if $\eps, \rho > 0$ are sufficiently small.

Inequality \eqref{2.41} is equivalent to
\begin{equation} \label{2.45}
\sup_{v \in C_\delta,\, t, \tau \ge 0}\, E_0(tv + \tau w_{\eps,\rho}) < \frac{p - \sigma}{(N - \sigma)\, p}\, S_{N,\,p,\,\sigma}^{(N - \sigma)/(p - \sigma)}.
\end{equation}
For $v \in C_\delta$ and $t, \tau \ge 0$,
\[
E_0(tv + \tau w_{\eps,\rho}) = E_0(tv) + E_0(\tau w_{\eps,\rho})
\]
since $v$ and $w_{\eps,\rho}$ have disjoint supports. So
\begin{equation} \label{2.46}
\sup_{v \in C_\delta,\, t, \tau \ge 0}\, E_0(tv + \tau w_{\eps,\rho}) = \sup_{v \in C_\delta,\, t \ge 0}\, E_0(tv) + \sup_{\tau \ge 0}\, E_0(\tau w_{\eps,\rho}).
\end{equation}

\begin{lemma} \label{Lemma 2.8}
We have
\[
\sup_{v \in C_\delta,\, t \ge 0}\, E_0(tv) \le \begin{cases}
0 & \text{if } \lambda_k + c_1\, \rho^{N-p} \le \lambda < \lambda_{k+1}\\[10pt]
c_{13}\, \rho^{(N - \sigma)(N-p)/(p - \sigma)} & \text{if } \lambda = \lambda_k,
\end{cases}
\]
where $c_1$ is as in Lemma \ref{Lemma 2.3} and $c_{13} > 0$ is a constant.
\end{lemma}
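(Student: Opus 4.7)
My plan is to follow the template of Lemma \ref{Lemma 2.5}, replacing the pure Sobolev term by the weighted Hardy--Sobolev term. For $v \in C_\delta \subset \M$ and $t \ge 0$, I would start from
\[
E_0(tv) = \frac{t^p}{p} \int_\Omega \big(|\nabla v|^p - \lambda\, |v|^p\big)\, dx - \frac{t^{p^\ast(\sigma)}}{p^\ast(\sigma)} \int_\Omega \frac{|v|^{p^\ast(\sigma)}}{|x|^\sigma}\, dx.
\]
By Lemma \ref{Lemma 2.3}, $C_\delta \subset \Psi^{\lambda_k + c_1\, \rho^{N-p}}$, so $\pnorm[p]{v}^p \ge p/(\lambda_k + c_1\, \rho^{N-p})$ and, since $v \in \M$, the first bracket is at most $1 - \lambda/(\lambda_k + c_1\, \rho^{N-p})$. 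If $\lambda_k + c_1\, \rho^{N-p} \le \lambda < \lambda_{k+1}$, this is nonpositive and $E_0(tv) \le 0$ at once; if $\lambda = \lambda_k$, it is bounded by $c_7\, \rho^{N-p}$ with $c_7 = c_1/\lambda_k$, as in \eqref{2.21}.

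To handle the second case I need a lower bound on $\int_\Omega |v|^{p^\ast(\sigma)}/|x|^\sigma\, dx$ that is uniform in $v \in C_\delta$ and in small $\rho$. I would use the H\"{o}lder splitting
\[
\pnorm[p]{v}^p \le \left(\int_\Omega |x|^{\sigma(N-p)/(p - \sigma)}\, dx\right)^{(p^\ast(\sigma) - p)/p^\ast(\sigma)} \left(\int_\Omega \frac{|v|^{p^\ast(\sigma)}}{|x|^\sigma}\, dx\right)^{p/p^\ast(\sigma)}
\]
(the same splitting already invoked in the proof of Lemma \ref{Lemma 2.7}); the weight $|x|^{\sigma(N-p)/(p - \sigma)}$ is locally integrable, so the prefactor is a finite constant depending only on $\Omega$. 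Combined with $\pnorm[p]{v}^p \ge p/(2\lambda_k)$ for sufficiently small $\rho$, this yields $\int_\Omega |v|^{p^\ast(\sigma)}/|x|^\sigma\, dx \ge c_8$ for some $c_8 > 0$ independent of $v$ and $\rho$.

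Plugging these bounds back in gives $E_0(tv) \le c_7\, \rho^{N-p}\, t^p - (c_8/p^\ast(\sigma))\, t^{p^\ast(\sigma)}$, and maximizing the right-hand side over $t \ge 0$ produces a bound of the form $c_{13}\, \rho^{(N-p)\, p^\ast(\sigma)/(p^\ast(\sigma) - p)}$. The final step is purely algebraic: using $p^\ast(\sigma) - p = p\, (p - \sigma)/(N - p)$ one simplifies $p^\ast(\sigma)/(p^\ast(\sigma) - p) = (N - \sigma)/(p - \sigma)$, so the exponent becomes $(N - \sigma)(N - p)/(p - \sigma)$, matching the statement. The only real point to watch is the uniformity of the H\"{o}lder prefactor and of the lower bound on $\pnorm[p]{v}$ in both $v \in C_\delta$ and small $\rho$; this is the one obstacle, but it is immediate from the construction of $C_\delta$ and the boundedness of $\Omega$.
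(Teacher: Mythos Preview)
Your proof is correct and follows essentially the same approach as the paper: the paper also splits $E_0(tv)$ into the $t^p$ and $t^{p^\ast(\sigma)}$ pieces, invokes \eqref{2.20}--\eqref{2.21} for the first, refers to the H\"{o}lder argument from Lemma~\ref{Lemma 2.7} for a uniform lower bound on the weighted integral, and then maximizes over $t$. You spell out the H\"{o}lder exponent and the final algebra $(N-p)\,p^\ast(\sigma)/(p^\ast(\sigma)-p)=(N-\sigma)(N-p)/(p-\sigma)$ more explicitly than the paper, but the argument is the same.
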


\begin{proof}
For $v \in C_\delta$ and $t \ge 0$,
\[
E_0(tv) = \frac{t^p}{p} \int_\Omega \big(|\nabla v|^p - \lambda\, |v|^p\big)\, dx - \frac{t^{p^\ast(\sigma)}}{p^\ast(\sigma)} \int_\Omega \frac{|v|^{p^\ast(\sigma)}}{|x|^\sigma}\, dx
\]
and \eqref{2.20} holds. So $E_0(tv) \le 0$ if $\lambda_k + c_1\, \rho^{N-p} \le \lambda < \lambda_{k+1}$. If $\lambda = \lambda_k$, then \eqref{2.21} holds and
\[
\frac{1}{p^\ast(\sigma)} \int_\Omega \frac{|v|^{p^\ast(\sigma)}}{|x|^\sigma}\, dx \ge c_{14}
\]
for some constant $c_{14} > 0$ as in the proof of Lemma \ref{Lemma 2.7}, so
\[
E_0(tv) \le c_7\, \rho^{N-p}\, t^p - c_{14}\, t^{p^\ast(\sigma)}
\]
and maximizing the right-hand side over all $t \ge 0$ gives the desired conclusion.
\end{proof}

\begin{lemma} \label{Lemma 2.8.1}
We have
\[
\sup_{\tau \ge 0}\, E_0(\tau w_{\eps,\rho}) \le \left\{\begin{aligned}
& \frac{p - \sigma}{(N - \sigma)\, p}\, \big[S_{N,\,p,\,\sigma} + c_{11}\, \eps^{(N-p)/(p - \sigma)} \rho^{-(N-p)/(p-1)} &&\\[5pt]
& \hquad - \lambda c_{12}\, \eps^{(p-1)\,p/(p - \sigma)}\big]^{(N - \sigma)/(p - \sigma)} && \text{if } N > p^2\\[10pt]
& \frac{p - \sigma}{(N - \sigma)\, p}\, \big[S_{N,\,p,\,\sigma} + c_{11}\, \eps^{(p-1)\,p/(p - \sigma)} \rho^{-p} &&\\[5pt]
& \hquad - \lambda c_{12}\, \eps^{(p-1)\,p/(p - \sigma)} \abs{\log\, (\eps \rho^{-(p - \sigma)/(p-1)})}\big]^{(N - \sigma)/(p - \sigma)} && \text{if } N = p^2.
\end{aligned}\right.
\]
\end{lemma}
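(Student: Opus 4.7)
The argument parallels that of Lemma \ref{Lemma 2.5.1}, with $p^\ast$ replaced by $p^\ast(\sigma)$ and the $L^{p^\ast}$ normalization replaced by the weighted one \eqref{2.42}. First I would use \eqref{2.42} to rewrite
\[
E_0(\tau w_{\eps,\rho}) = \frac{\tau^p}{p}\, a_{\eps,\rho} - \frac{\tau^{p^\ast(\sigma)}}{p^\ast(\sigma)}, \qquad a_{\eps,\rho} := \int_\Omega \big(|\nabla w_{\eps,\rho}|^p - \lambda\, w_{\eps,\rho}^p\big)\, dx,
\]
so that only the quantity $a_{\eps,\rho}$ depends on $\eps$ and $\rho$.

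Next, for $\eps, \rho$ small enough to guarantee $a_{\eps,\rho} > 0$ (which holds by \eqref{2.43}--\eqref{2.44} since the dominant term is $S_{N,\,p,\,\sigma}$), I would maximize the right-hand side in $\tau \ge 0$ by elementary calculus. The critical point satisfies $\tau^{p^\ast(\sigma) - p} = a_{\eps,\rho}$, and substituting back yields
\[
\sup_{\tau \ge 0}\, E_0(\tau w_{\eps,\rho}) = \left(\frac{1}{p} - \frac{1}{p^\ast(\sigma)}\right) a_{\eps,\rho}^{p^\ast(\sigma)/(p^\ast(\sigma) - p)} = \frac{p - \sigma}{(N - \sigma)\, p}\, a_{\eps,\rho}^{(N - \sigma)/(p - \sigma)},
\]
after simplifying the exponents using $p^\ast(\sigma) - p = p(p-\sigma)/(N-p)$ and $p^\ast(\sigma) = (N-\sigma)p/(N-p)$.

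Finally, I would apply the two Hardy-Sobolev bubble estimates: \eqref{2.43} gives an upper bound for $\int|\nabla w_{\eps,\rho}|^p\, dx$, and \eqref{2.44} gives a lower bound for $\int w_{\eps,\rho}^p\, dx$ (splitting into the cases $N > p^2$ and $N = p^2$). Combining these yields an upper bound on $a_{\eps,\rho}$ which, raised to the power $(N-\sigma)/(p-\sigma)$ and multiplied by the prefactor $(p-\sigma)/((N-\sigma)p)$, is exactly the right-hand side of the lemma in each of the two cases. There is no substantive obstacle here; the only bookkeeping hazard is keeping the exponent algebra straight and verifying that $a_{\eps,\rho} > 0$ so that the sup is attained at an interior critical point in $\tau$, both of which reduce to the elementary identities for $p^\ast(\sigma)$ noted above.
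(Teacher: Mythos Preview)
Your proposal is correct and follows essentially the same approach as the paper: use \eqref{2.42} to reduce $E_0(\tau w_{\eps,\rho})$ to $\frac{\tau^p}{p}\,a_{\eps,\rho} - \frac{\tau^{p^\ast(\sigma)}}{p^\ast(\sigma)}$, maximize in $\tau$ to obtain $\frac{p-\sigma}{(N-\sigma)p}\,a_{\eps,\rho}^{(N-\sigma)/(p-\sigma)}$, and then bound $a_{\eps,\rho}$ using \eqref{2.43} and \eqref{2.44}. Your added remarks on the exponent algebra and on $a_{\eps,\rho}>0$ are fine but not needed beyond what the paper does.
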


\begin{proof}
We have
\[
E_0(\tau w_{\eps,\rho}) = \frac{\tau^p}{p} \int_\Omega \left(|\nabla w_{\eps,\rho}|^p - \lambda w_{\eps,\rho}^p\right) dx - \frac{\tau^{p^\ast(\sigma)}}{p^\ast(\sigma)}
\]
by \eqref{2.42}, and maximizing the right-hand side over all $\tau \ge 0$ gives
\[
\sup_{\tau \ge 0}\, E_0(\tau w_{\eps,\rho}) = \frac{p - \sigma}{(N - \sigma)\, p} \left[\int_\Omega \left(|\nabla w_{\eps,\rho}|^p - \lambda w_{\eps,\rho}^p\right) dx\right]^{(N - \sigma)/(p - \sigma)},
\]
so the desired conclusion follows from \eqref{2.43} and \eqref{2.44}.
\end{proof}

We can now complete the proof of Theorem \ref{Theorem 1.7}. First suppose $N \ge p^2$ and $\lambda > \lambda_1$ is not an eigenvalue. Then $\lambda_k < \lambda < \lambda_{k+1}$ for some $k \in \N$. Let $\rho \in (0,\rho_0/2]$ be so small that $\lambda_k + c_1\, \rho^{N-p} \le \lambda$. Then \eqref{2.45} follows from \eqref{2.46}, Lemma \ref{Lemma 2.8}, and Lemma \ref{Lemma 2.8.1} for sufficiently small $\eps > 0$.

Now suppose $(N - \sigma)(N - p^2) > (p - \sigma)\, p$ and $\lambda \ge \lambda_1$. Then $\lambda_k \le \lambda < \lambda_{k+1}$ for some $k \in \N$. We have already considered the case where $N > p^2$ and $\lambda_k < \lambda < \lambda_{k+1}$, so suppose $\lambda = \lambda_k$. Then
\begin{multline*}
\sup_{v \in C_\delta,\, t, \tau \ge 0}\, E_0(tv + \tau w_{\eps,\rho}) \le \frac{p - \sigma}{(N - \sigma)\, p}\, \big[S_{N,\,p,\,\sigma} + c_{11}\, \eps^{(N-p)/(p - \sigma)} \rho^{-(N-p)/(p-1)}\\[10pt]
- \lambda c_{12}\, \eps^{(p-1)\,p/(p - \sigma)}\big]^{(N - \sigma)/(p - \sigma)} + c_{13}\, \rho^{(N - \sigma)(N-p)/(p - \sigma)}
\end{multline*}
by \eqref{2.46}, Lemma \ref{Lemma 2.8}, and Lemma \ref{Lemma 2.8.1}. Set $\rho = \eps^\alpha$, where $\alpha > 0$ is to be chosen. Then the right-hand side is less than or equal to
\begin{multline*}
\frac{p - \sigma}{(N - \sigma)\, p}\, S_{N,\,p,\,\sigma}^{(N - \sigma)/(p - \sigma)} \left[1 + c_{15}\, \eps^{(N-p)[1/(p - \sigma) - \alpha/(p-1)]} - c_{16}\, \eps^{(p-1)\,p/(p - \sigma)}\right]^{(N - \sigma)/(p - \sigma)}\\[10pt]
+ c_{13}\, \eps^{\alpha\,(N - \sigma)(N-p)/(p - \sigma)}
\end{multline*}
for some constants $c_{15}, c_{16} > 0$, so \eqref{2.45} will follow for sufficiently small $\eps > 0$ if $\alpha$ can be found so that
\[
(N - p)[1/(p - \sigma) - \alpha/(p - 1)] > (p - 1)\, p/(p - \sigma)
\]
and
\[
\alpha\, (N - \sigma)(N - p)/(p - \sigma) > (p - 1)\, p/(p - \sigma).
\]
This is possible if and only if
\[
(p - 1)\, p/(N - \sigma)(N - p) < (p - 1)[1 - (p - 1)\, p/(N - p)]/(p - \sigma),
\]
i.e.,
\[
(N - \sigma)(N - p^2) > (p - \sigma)\, p. \hquad \qedsymbol
\]

\subsection{Proof of Theorem \ref{Theorem 1.9}}

We prove Theorem \ref{Theorem 1.9} by applying Theorem \ref{Theorem 1.5} with $W = W^{s,\,p}_0(\Omega)$, the operators $\Ap, \Bp, f \linebreak
\in C(W^{s,\,p}_0(\Omega),W^{s,\,p}_0(\Omega)^\ast)$ and $h \in W^{s,\,p}_0(\Omega)^\ast$ given by
\begin{multline*}
\dualp{\Ap[u]}{v} = \int_{\R^{2N}} \frac{|u(x) - u(y)|^{p-2}\, (u(x) - u(y))\, (v(x) - v(y))}{|x - y|^{N+sp}}\, dx dy,\\[10pt]
\dualp{\Bp[u]}{v} = \int_\Omega |u|^{p-2}\, uv\, dx, \quad \dualp{f(u)}{v} = \int_\Omega |u|^{p_s^\ast - 2}\, uv\, dx, \quad u, v \in W^{s,\,p}_0(\Omega)
\end{multline*}
and
\[
\dualp{h}{v} = \int_\Omega h(x)\, v\, dx, \quad v \in W^{s,\,p}_0(\Omega),
\]
and $g = 0$.

\begin{lemma} \label{Lemma 2.9}
There exists $\kappa > 0$ such that the functional $E$ in \eqref{1.20} satisfies the {\em \PS{c}} condition for all
\begin{equation} \label{2.49}
c < \frac{s}{N}\, S_{N,\,p,\,s}^{N/sp} - \kappa \pnorm[{p_s^\ast}']{h}^{{p_s^\ast}'}.
\end{equation}
\end{lemma}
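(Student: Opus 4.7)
The plan is to follow the template of Lemmas \ref{Lemma 2.2} and \ref{Lemma 2.6}, replacing the Dirichlet integral by the Gagliardo seminorm and using the fractional Sobolev inequality \eqref{1.21} together with the nonlocal version of the Br\'ezis--Lieb lemma. Let $\seq{u_j} \subset W^{s,\,p}_0(\Omega)$ satisfy $E(u_j) = c + \o(1)$ and $\dualp{E'(u_j)}{v} = \o(\norm{v})$ for all $v \in W^{s,\,p}_0(\Omega)$. First I would establish boundedness of $\seq{u_j}$ by fixing any $r \in (p,p_s^\ast)$ and forming $E(u_j) - \frac{1}{r}\dualp{E'(u_j)}{u_j}$; the coefficients $1/p - 1/r$, $1/r - 1/p_s^\ast$, and $1 - 1/r$ all carry the correct signs, so H\"{o}lder's inequality applied to the $h$-term, combined with the fractional Sobolev embedding, yields $\sup_j \norm{u_j} < \infty$.

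I would then pass to a subsequence with $u_j \wto u$ in $W^{s,\,p}_0(\Omega)$, $u_j \to u$ in $L^t(\Omega)$ for all $t \in [1,p_s^\ast)$, and $u_j \to u$ a.e.\! in $\Omega$, and set $\widetilde{u}_j = u_j - u$. The key step is to invoke the Br\'ezis--Lieb decomposition for both the Gagliardo seminorm and for $L^{p_s^\ast}(\Omega)$, giving
\[
\norm{\widetilde{u}_j}^p = \norm{u_j}^p - \norm{u}^p + \o(1), \qquad \pnorm[p_s^\ast]{\widetilde{u}_j}^{p_s^\ast} = \pnorm[p_s^\ast]{u_j}^{p_s^\ast} - \pnorm[p_s^\ast]{u}^{p_s^\ast} + \o(1).
\]
Testing $\dualp{E'(u_j)}{u_j} = \o(1)$, passing to the limit to obtain the analogous identity for $u$, and subtracting would then give
\[
\norm{\widetilde{u}_j}^p = \pnorm[p_s^\ast]{\widetilde{u}_j}^{p_s^\ast} + \o(1) \le \frac{\norm{\widetilde{u}_j}^{p_s^\ast}}{S_{N,\,p,\,s}^{p_s^\ast/p}} + \o(1),
\]
and hence the usual dichotomy $\norm{\widetilde{u}_j}^p \big(S_{N,\,p,\,s}^{N/(N-sp)} - \norm{\widetilde{u}_j}^{sp^2/(N-sp)}\big) \le \o(1)$.

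Finally, I would combine the two Br\'ezis--Lieb identities with $E(u_j) = c + \o(1)$; using $\frac{1}{p} - \frac{1}{p_s^\ast} = \frac{s}{N}$, a direct computation leads to
\[
c = \frac{s}{N}\, \norm{\widetilde{u}_j}^p + \frac{s}{N}\, \pnorm[p_s^\ast]{u}^{p_s^\ast} - \Big(1 - \frac{1}{p}\Big) \int_\Omega h(x)\, u\, dx + \o(1).
\]
Bounding the last integral by H\"{o}lder in the pair $(p_s^\ast,{p_s^\ast}')$ and absorbing the resulting $\pnorm[p_s^\ast]{u}$ into $\frac{s}{N}\, \pnorm[p_s^\ast]{u}^{p_s^\ast}$ via Young's inequality isolates a lower bound $-\kappa\, \pnorm[{p_s^\ast}']{h}^{{p_s^\ast}'}$, so that
\[
\norm{\widetilde{u}_j}^p \le \frac{N}{s}\Big(c + \kappa\, \pnorm[{p_s^\ast}']{h}^{{p_s^\ast}'}\Big) + \o(1).
\]
Under \eqref{2.49} this upper bound is strictly less than $S_{N,\,p,\,s}^{N/(N-sp)}$, ruling out the nontrivial branch of the dichotomy and forcing $\widetilde{u}_j \to 0$ in $W^{s,\,p}_0(\Omega)$. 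I expect no genuine obstacle: the only nonroutine ingredient is the Br\'ezis--Lieb decomposition for the Gagliardo seminorm, which is by now standard in the fractional Sobolev literature and reduces to the classical lemma applied to the difference quotient in $L^p(\R^{2N}, |x - y|^{-N-sp}\, dx\, dy)$.
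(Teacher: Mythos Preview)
Your proposal is correct and follows essentially the same approach as the paper's own proof: boundedness via $E(u_j)-\tfrac{1}{r}\dualp{E'(u_j)}{u_j}$ for some $r\in(p,p_s^\ast)$, the Br\'ezis--Lieb decomposition for both $[\,\cdot\,]_{s,p}^p$ and $\pnorm[p_s^\ast]{\cdot}^{p_s^\ast}$, the resulting dichotomy via \eqref{1.21}, and the energy identity $c=\tfrac{s}{N}\norm{\widetilde u_j}^p+\tfrac{s}{N}\pnorm[p_s^\ast]{u}^{p_s^\ast}-(1-\tfrac{1}{p})\int_\Omega hu+\o(1)$ combined with H\"older--Young. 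The only cosmetic difference is that the paper cites \cite[Lemma~3.2]{MR3458311} for the Gagliardo Br\'ezis--Lieb identity rather than deriving it from the classical lemma in the weighted $L^p$ space, and obtains the limit equation for $u$ by testing \eqref{2.51} with $v=u$.
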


\begin{proof}
Let $c \in \R$ and let $\seq{u_j}$ be a sequence in $W^{s,\,p}_0(\Omega)$ such that
\begin{multline} \label{2.50}
E(u_j) = \frac{1}{p} \int_{\R^{2N}} \frac{|u_j(x) - u_j(y)|^p}{|x - y|^{N+sp}}\, dx dy - \int_\Omega \left(\frac{\lambda}{p}\, |u_j|^p + \frac{1}{p_s^\ast}\, |u_j|^{p_s^\ast} + h(x)\, u_j\right) dx\\[10pt]
= c + \o(1)
\end{multline}
and
\begin{multline} \label{2.51}
\dualp{E'(u_j)}{v} = \int_{\R^{2N}} \frac{|u_j(x) - u_j(y)|^{p-2}\, (u_j(x) - u_j(y))\, (v(x) - v(y))}{|x - y|^{N+sp}}\, dx dy\\[10pt]
- \int_\Omega \left(\lambda\, |u_j|^{p-2}\, u_j\, v + |u_j|^{p_s^\ast - 2}\, u_j\, v + h(x)\, v\right) dx = \o(\norm{v}) \quad \forall v \in W^{s,\,p}_0(\Omega).
\end{multline}
Taking $v = u_j$ in \eqref{2.51} gives
\begin{equation} \label{2.52}
\int_{\R^{2N}} \frac{|u_j(x) - u_j(y)|^p}{|x - y|^{N+sp}}\, dx dy - \int_\Omega \left(\lambda\, |u_j|^p + |u_j|^{p_s^\ast} + h(x)\, u_j\right) dx = \o(\norm{u_j}).
\end{equation}
Let $r \in (p,p_s^\ast)$. Dividing \eqref{2.52} by $r$ and subtracting from \eqref{2.50} gives
\begin{multline*}
\left(\frac{1}{p} - \frac{1}{r}\right) \int_{\R^{2N}} \frac{|u_j(x) - u_j(y)|^p}{|x - y|^{N+sp}}\, dx dy - \int_\Omega \bigg[\lambda \left(\frac{1}{p} - \frac{1}{r}\right) |u_j|^p - \left(\frac{1}{r} - \frac{1}{p_s^\ast}\right) |u_j|^{p_s^\ast}\\[10pt]
+ \left(1 - \frac{1}{r}\right) h(x)\, u_j\bigg] dx = c + \o(1) + \o(\norm{u_j}),
\end{multline*}
and it follows from this that $\seq{u_j}$ is bounded. So a renamed subsequence converges to some $u$ weakly in $W^{s,\,p}_0(\Omega)$, strongly in $L^t(\Omega)$ for all $t \in [1,p_s^\ast)$, and a.e.\! in $\Omega$. Setting $\widetilde{u}_j = u_j - u$, we will show that $\widetilde{u}_j \to 0$ in $W^{s,\,p}_0(\Omega)$.

Equation \eqref{2.52} gives
\begin{equation} \label{2.53}
\norm{u_j}^p = \pnorm[p_s^\ast]{u_j}^{p_s^\ast} + \int_\Omega \left(\lambda\, |u|^p + h(x)\, u\right) dx + \o(1).
\end{equation}
Taking $v = u$ in \eqref{2.51} and passing to the limit gives
\begin{equation} \label{2.54}
\norm{u}^p = \pnorm[p_s^\ast]{u}^{p_s^\ast} + \int_\Omega \left(\lambda\, |u|^p + h(x)\, u\right) dx.
\end{equation}
Since
\begin{equation} \label{2.55}
\norm{\widetilde{u}_j}^p = \norm{u_j}^p - \norm{u}^p + \o(1)
\end{equation}
by Perera et al.\! \cite[Lemma 3.2]{MR3458311} and
\[
\pnorm[p_s^\ast]{\widetilde{u}_j}^{p_s^\ast} = \pnorm[p_s^\ast]{u_j}^{p_s^\ast} - \pnorm[p_s^\ast]{u}^{p_s^\ast} + \o(1)
\]
by the Br{\'e}zis-Lieb lemma \cite[Theorem 1]{MR699419}, \eqref{2.53} and \eqref{2.54} imply
\[
\norm{\widetilde{u}_j}^p = \pnorm[p_s^\ast]{\widetilde{u}_j}^{p_s^\ast} + \o(1) \le \frac{\norm{\widetilde{u}_j}^{p_s^\ast}}{S_{N,\,p,\,s}^{p_s^\ast/p}} + \o(1),
\]
so
\begin{equation} \label{2.56}
\norm{\widetilde{u}_j}^p \left(S_{N,\,p,\,s}^{N/(N-sp)} - \norm{\widetilde{u}_j}^{sp^2/(N-sp)}\right) \le \o(1).
\end{equation}
On the other hand, \eqref{2.50} gives
\[
c = \frac{1}{p} \norm{u_j}^p - \frac{1}{p_s^\ast} \pnorm[p_s^\ast]{u_j}^{p_s^\ast} - \int_\Omega \left(\frac{\lambda}{p}\, |u|^p + h(x)\, u\right) dx + \o(1),
\]
and a straightforward calculation combining this with \eqref{2.53}--\eqref{2.55} gives
\[
c = \frac{s}{N} \norm{\widetilde{u}_j}^p + \int_\Omega \left[\frac{s}{N}\, |u|^{p_s^\ast} - \left(1 - \frac{1}{p}\right) h(x)\, u\right] dx + \o(1).
\]
The integral on the right-hand side is greater than or equal to
\[
\frac{s}{N} \pnorm[p_s^\ast]{u}^{p_s^\ast} - \left(1 - \frac{1}{p}\right) \pnorm[{p_s^\ast}']{h} \pnorm[p_s^\ast]{u} \ge - \kappa \pnorm[{p_s^\ast}']{h}^{{p_s^\ast}'}
\]
for some $\kappa > 0$ by the H\"{o}lder and Young's inequalities, so
\[
\norm{\widetilde{u}_j}^p \le \frac{N}{s} \left(c + \kappa \pnorm[{p_s^\ast}']{h}^{{p_s^\ast}'}\right) + \o(1).
\]
Combining this with \eqref{2.56} shows that $\widetilde{u}_j \to 0$ when \eqref{2.49} holds.
\end{proof}

We will apply Theorem \ref{Theorem 1.5} with
\[
c_{\mu,\,h}^\ast = \frac{s}{N}\, S_{N,\,p,\,s}^{N/sp} - \kappa \pnorm[{p_s^\ast}']{h}^{{p_s^\ast}'},
\]
where $\kappa > 0$ is as in Lemma \ref{Lemma 2.9}, noting that
\[
\lim_{\pnorm[{p_s^\ast}']{h} \to 0}\, c_{\mu,\,h}^\ast = \frac{s}{N}\, S_{N,\,p,\,s}^{N/sp}.
\]
We have
\begin{gather*}
\M = \set{u \in W^{s,\,p}_0(\Omega) : \norm{u}^p = p},\\[10pt]
\Psi(u) = \frac{p}{\pnorm[p]{u}^p}, \quad u \in \M,\\[10pt]
\pi_\M(u) = \frac{p^{1/p}\, u}{\norm{u}}, \quad u \in W^{s,\,p}_0(\Omega) \minuszero,
\end{gather*}
and
\[
E_0(u) = \frac{1}{p} \int_{\R^{2N}} \frac{|u(x) - u(y)|^p}{|x - y|^{N+sp}}\, dx dy - \int_\Omega \left(\frac{\lambda}{p}\, |u|^p + \frac{1}{p_s^\ast}\, |u|^{p_s^\ast}\right) dx, \quad u \in W^{s,\,p}_0(\Omega).
\]
Let $\lambda_k \le \lambda < \lambda_{k+1}$. We need to show that there exist $R > 0$ and, for all sufficiently small $\delta > 0$, a compact symmetric subset $C_\delta$ of $\Psi^{\lambda + \delta}$ with $i(C_\delta) = k$ and $w_\delta \in \M \setminus C_\delta$ such that, setting $A_\delta = \set{\pi_\M((1 - \tau)\, v + \tau w_\delta) : v \in C_\delta,\, 0 \le \tau \le 1}$, we have
\begin{equation} \label{2.57}
\sup_{u \in A_\delta}\, E_0(Ru) \le 0, \qquad \sup_{u \in A_\delta,\, 0 \le t \le R}\, E_0(tu) < \frac{s}{N}\, S_{N,\,p,\,s}^{N/sp}.
\end{equation}

Since $\lambda_k < \lambda_{k+1}$, $\Psi^{\lambda_k}$ has a compact symmetric subset $C_0$ of index $k$ that is bounded in $L^\infty(\Omega)$ (see Mosconi et al.\! \cite[Proposition 3.1]{MR3530213}). We may assume without loss of generality that $0 \in \Omega$. Let $\rho_0 = \dist{0}{\bdry{\Omega}}$, let $\eta : [0,\infty) \to [0,1]$ be a smooth function such that $\eta(t) = 0$ for $t \le 3/4$ and $\eta(t) = 1$ for $t \ge 1$, let
\[
u_\rho(x) = \eta\bigg(\frac{|x|}{\rho}\bigg)\, u(x), \quad u \in C_0,\, 0 < \rho \le \rho_0/2,
\]
and let
\[
C = \set{\pi_\M(u_\rho) : u \in C_0}.
\]
The following lemma was proved in Mosconi et al.\! \cite{MR3530213}.

\begin{lemma}[Mosconi et al.\! {\cite[Proposition 3.2]{MR3530213}}] \label{Lemma 2.10}
The set $C$ is a compact symmetric subset of $\Psi^{\lambda_k + c_{17}\, \rho^{N-sp}}$ for some constant $c_{17} > 0$ and is bounded in $L^\infty(\Omega)$. If $\lambda_k + c_{17}\, \rho^{N-sp} < \lambda_{k+1}$, then $i(C) = k$.
\end{lemma}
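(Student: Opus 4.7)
The proof will parallel that of Lemma \ref{Lemma 2.3}, adapted to the fractional setting. The plan is to control $\Psi(\pi_\M(u_\rho))$ uniformly from above for every $u \in C_0$, and then derive compactness, $L^\infty$-boundedness, and the index computation from this estimate together with the properties of $C_0$.

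The crux is the pair of bounds
\[
[u_\rho]_{s,\,p}^p \le p + c\,\rho^{N-sp}, \qquad \int_\Omega |u_\rho|^p\, dx \ge \frac{p}{\lambda_k} - c\,\rho^N,
\]
valid for all $u \in C_0$. The $L^p$ lower bound is immediate, since $u_\rho = u$ outside $B_\rho(0)$ and the $L^\infty$-bound $\|u\|_\infty \le M$ on $C_0$ provided by \cite[Proposition 3.1]{MR3530213} controls the missing mass in $B_\rho(0)$. For the seminorm estimate I would write $u_\rho = u - \varphi_\rho u$ with $\varphi_\rho(x) = 1 - \eta(|x|/\rho)$ supported in $\closure{B_\rho(0)}$, and use the triangle inequality in $W^{s,\,p}_0(\Omega)$ to reduce matters to $[\varphi_\rho u]_{s,\,p}^p \le c\,\rho^{N-sp}$. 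To prove this I would split the Gagliardo double integral according to whether each of $x, y$ lies in $B_{2\rho}(0)$: on the diagonal piece use the Lipschitz bound $|(\varphi_\rho u)(x) - (\varphi_\rho u)(y)| \le c\,|x-y|/\rho$, truncated by $2M$ once $|x-y|$ exceeds $\rho$; on the off-diagonal pieces exploit that $\varphi_\rho u$ vanishes outside $B_\rho(0)$ and evaluate in polar coordinates. Each piece contributes $O(\rho^{N-sp})$ with a constant depending only on $M$, $s$, $p$, and $N$; this is precisely the computation of \cite[Proposition 3.2]{MR3530213}.

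Dividing the two bounds yields $\Psi(\pi_\M(u_\rho)) \le \lambda_k + c_{17}\,\rho^{N-sp}$, so $C \subset \Psi^{\lambda_k + c_{17}\,\rho^{N-sp}}$. The map $u \mapsto \pi_\M(u_\rho)$ is odd, and is continuous from $C_0$ into $\M$ because multiplication by the smooth bounded cutoff $\eta(|\cdot|/\rho)$ is continuous on $W^{s,\,p}_0(\Omega)$ and $u_\rho$ never vanishes by the $L^p$ lower bound. Hence $C$ is compact and symmetric, and $i(C) \ge i(C_0) = k$ by monotonicity of the index; the assumption $\lambda_k + c_{17}\,\rho^{N-sp} < \lambda_{k+1}$ gives $C \subset \M \setminus \Psi_{\lambda_{k+1}}$, whence $i(C) \le i(\M \setminus \Psi_{\lambda_{k+1}}) = k$ by \eqref{1.7}, forcing equality. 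For the $L^\infty$ bound, note that $\|u_\rho\|_\infty \le M$, while $[u_\rho]_{s,\,p}$ is bounded below by a positive constant (combine the $L^p$ lower bound with H\"{o}lder's inequality and the Sobolev embedding $W^{s,\,p}_0(\Omega) \incl L^{p_s^\ast}(\Omega)$), so $\pi_\M(u_\rho) = p^{1/p}\,u_\rho/[u_\rho]_{s,\,p}$ is bounded in $L^\infty(\Omega)$ uniformly in $u \in C_0$.

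The principal obstacle is the fractional seminorm estimate for $\varphi_\rho u$: unlike the local case of Lemma \ref{Lemma 2.3}, one does not have a $C^1_\loc$-bound on the elements of $C_0$, only the $L^\infty$-bound; moreover, the nonlocal double integral mixes contributions from near and far from $\bdry{B_\rho(0)}$, so the sharp $\rho^{N-sp}$ scaling requires the careful region-by-region splitting described above.
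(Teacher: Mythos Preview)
The paper does not prove Lemma~\ref{Lemma 2.10}: it is quoted verbatim from \cite[Proposition~3.2]{MR3530213} with no argument supplied, so there is no in-paper proof to compare your outline against. The parts of your sketch dealing with compactness, symmetry, the $L^\infty$ bound, and the index computation via \eqref{1.7} are fine and do parallel Lemma~\ref{Lemma 2.3}. However, the seminorm estimate---the heart of the matter---has two concrete gaps.

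First, the triangle inequality does not deliver $[u_\rho]_{s,p}^p \le p + c\,\rho^{N-sp}$ from $[\varphi_\rho u]_{s,p}^p \le c\,\rho^{N-sp}$. It gives only
\[
[u_\rho]_{s,p} \le [u]_{s,p} + [\varphi_\rho u]_{s,p} = p^{1/p} + O\!\big(\rho^{(N-sp)/p}\big),
\]
hence $[u_\rho]_{s,p}^p \le p + O\!\big(\rho^{(N-sp)/p}\big)$, with the exponent off by a factor of~$p$. This matters: the sharp rate $\rho^{N-sp}$ is exactly what feeds into Lemma~\ref{Lemma 2.13} and the final scaling $\rho=\eps^\alpha$ to reach the dimensional threshold $N(N-sp^2)>s^2p^2$ in Theorem~\ref{Theorem 1.9}.

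Second, the pointwise bound $|(\varphi_\rho u)(x)-(\varphi_\rho u)(y)|\le c|x-y|/\rho$ on the diagonal block is unjustified here. In Lemma~\ref{Lemma 2.3} it holds because $C_0$ is bounded in $C^1_\loc(\Omega)$; in the fractional setting the paper records only that $C_0$ is bounded in $L^\infty(\Omega)$. With $u$ merely bounded one has at best
\[
|(\varphi_\rho u)(x)-(\varphi_\rho u)(y)| \le |u(x)-u(y)| + cM\,|x-y|/\rho,
\]
and the first summand contributes the localized energy $\int_{B_{2\rho}}\!\int |u(x)-u(y)|^p\,|x-y|^{-N-sp}\,dy\,dx$, which for a generic $u\in\M$ is not $O(\rho^{N-sp})$. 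So neither the intermediate claim $[\varphi_\rho u]_{s,p}^p\le c\,\rho^{N-sp}$ nor---even granting it---the passage to $[u_\rho]_{s,p}^p\le p+c\,\rho^{N-sp}$ is established by your argument. A correct proof must bypass the triangle inequality at the level of seminorms and compare $[u_\rho]_{s,p}^p$ with $[u]_{s,p}^p$ directly, exploiting that the two integrands agree on $B_\rho^c\times B_\rho^c$ and controlling the remaining regions using both the $L^\infty$ bound and the constraint $[u]_{s,p}^p=p$; for the details one really does need to consult \cite{MR3530213}.
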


\begin{lemma} \label{Lemma 2.11}
For any $w \in \M \setminus C$ with support in $\closure{B_{\rho/2}(0)}$, $\exists\, R > 0$ such that, setting $A = \set{\pi_\M((1 - \tau)\, v + \tau w) : v \in C,\, 0 \le \tau \le 1}$, we have
\[
\sup_{u \in A}\, E_0(Ru) \le 0.
\]
\end{lemma}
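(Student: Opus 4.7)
The plan is to follow the template of Lemma \ref{Lemma 2.4}, adapting the one step (computation of $\pnorm[p]{u}^p$ for $u = \pi_\M((1-\tau)\, v + \tau w)$) that exploited the additivity of the norm on disjoint supports, which fails for the Gagliardo seminorm $[\cdot]_{s,\,p}$. Given $u \in A$, since $u \in \M$ I have $\norm{u}^p = p$, so for every $R > 0$,
\[
E_0(Ru) = R^p - \frac{\lambda R^p}{p}\, \pnorm[p]{u}^p - \frac{R^{p_s^\ast}}{p_s^\ast}\, \pnorm[p_s^\ast]{u}^{p_s^\ast} \le R^p - \frac{R^{p_s^\ast}}{p_s^\ast}\, \pnorm[p_s^\ast]{u}^{p_s^\ast}
\]
since $\lambda > 0$. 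Because $p_s^\ast > p$, it suffices to exhibit a constant $c > 0$ with $\pnorm[p_s^\ast]{u} \ge c$ uniformly on $A$; by H\"older's inequality on the bounded set $\Omega$ it is enough to bound $\pnorm[p]{u}$ away from zero on $A$.

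For the core estimate I would use that $\supp v \subset \Omega \setminus B_{3\rho/4}(0)$ and $\supp w \subset \closure{B_{\rho/2}(0)}$ are disjoint, so the $L^p$ norm splits exactly:
\[
\pnorm[p]{(1-\tau)\, v + \tau w}^p = (1-\tau)^p \pnorm[p]{v}^p + \tau^p \pnorm[p]{w}^p.
\]
Unlike the local case, $\norm{(1-\tau)\, v + \tau w}^p$ does \emph{not} split additively. Instead, I would use the triangle inequality together with $\norm{v} = \norm{w} = p^{1/p}$ to get
\[
\norm{(1-\tau)\, v + \tau w}^p \le \left((1-\tau)\norm{v} + \tau\norm{w}\right)^p = p,
\]
which happens to equal the value additivity would have yielded. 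Combining these,
\[
\pnorm[p]{u}^p = \frac{p\, \pnorm[p]{(1-\tau)\, v + \tau w}^p}{\norm{(1-\tau)\, v + \tau w}^p} \ge (1-\tau)^p \pnorm[p]{v}^p + \tau^p \pnorm[p]{w}^p \ge 2^{1-p} \min\set{\pnorm[p]{v}^p,\, \pnorm[p]{w}^p},
\]
where in the last step I use the elementary inequality $(1-\tau)^p + \tau^p \ge 2^{1-p}$ valid for $p > 1$ and $\tau \in [0,1]$.

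To close, Lemma \ref{Lemma 2.10} yields $\pnorm[p]{v}^p = p/\Psi(v) \ge p/(\lambda_k + c_{17}\, \rho^{N-sp})$ uniformly for $v \in C$, and $\pnorm[p]{w}$ is a fixed positive constant since $w \in \M$ is fixed. So $\pnorm[p]{u}$, and hence $\pnorm[p_s^\ast]{u}$, is uniformly bounded below on $A$; choosing $R$ large enough makes the critical term dominate $R^p$ and gives $E_0(Ru) \le 0$. The only genuine obstacle relative to Lemma \ref{Lemma 2.4} is the non-additivity of the Gagliardo seminorm on disjoint supports, and the triangle inequality handles it cleanly precisely because $v$ and $w$ both lie on the same sphere $\M$, making the upper bound $p$ sharp enough for the estimate to close.
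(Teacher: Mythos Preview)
Your proof is correct and follows the same route as the paper: bound $E_0(Ru)\le R^p-\dfrac{R^{p_s^\ast}}{p_s^\ast}\pnorm[p_s^\ast]{u}^{p_s^\ast}$, reduce via H\"older to a uniform lower bound on $\pnorm[p]{u}$, and conclude from the lower bound on $\pnorm[p]{v}$ over $C$ given by Lemma~\ref{Lemma 2.10}. The paper simply writes ``as in the proof of Lemma~\ref{Lemma 2.4}'' at the key step, whereas you correctly note that the Gagliardo seminorm does not split additively on disjoint supports and supply the triangle-inequality fix $\norm{(1-\tau)v+\tau w}^p\le p$; this costs an extra factor $2^{1-p}$ in the lower bound but is otherwise the same argument, so your write-up in fact makes explicit a detail the paper leaves implicit.
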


\begin{proof}
Let $u = \pi_\M((1 - \tau)\, v + \tau w) \in A$. For $R > 0$,
\[
E_0(Ru) \le \frac{R^p}{p} \int_{\R^{2N}} \frac{|u(x) - u(y)|^p}{|x - y|^{N+sp}}\, dx dy - \frac{R^{p_s^\ast}}{p_s^\ast} \int_\Omega |u|^{p_s^\ast}\, dx = R^p - \frac{R^{p_s^\ast}}{p_s^\ast} \pnorm[p_s^\ast]{u}^{p_s^\ast},
\]
so it suffices to show that $\pnorm[p_s^\ast]{u}$ is bounded away from zero on $A$. By the H\"{o}lder inequality, it is enough to show that $\pnorm[p]{u}$ is bounded away from zero. As in the proof of Lemma \ref{Lemma 2.4}, it suffices to show that $\pnorm[p]{v}$ is bounded away from zero on $C$. Since $C \subset \Psi^{\lambda_k + c_{17}\, \rho^{N-sp}}$ by Lemma \ref{Lemma 2.10}, we have
\[
\pnorm[p]{v}^p = \frac{p}{\Psi(v)} \ge \frac{p}{\lambda_k + c_{17}\, \rho^{N-sp}}. \QED
\]
\end{proof}

Let $\delta \in (0,\lambda_{k+1} - \lambda)$, let $\rho \in (0,\rho_0/2]$ be so small that $\lambda_k + c_{17}\, \rho^{N-sp} < \lambda + \delta$, and let $C_\delta = C$. Then $C_\delta$ is a compact symmetric subset of $\Psi^{\lambda + \delta}$ with $i(C_\delta) = k$ that is bounded in $L^\infty(\Omega)$ by Lemma \ref{Lemma 2.10}. We will show that if $\delta > 0$ is sufficiently small, then $\exists\, w_\delta \in \M \setminus C_\delta$ with support in $\closure{B_{\rho/2}(0)}$ such that, setting $A_\delta = \set{\pi_\M((1 - \tau)\, v + \tau w_\delta) : v \in C_\delta,\, 0 \le \tau \le 1}$, we have
\begin{equation} \label{2.58}
\sup_{u \in A_\delta,\, t \ge 0}\, E_0(tu) < \frac{s}{N}\, S_{N,\,p,\,s}^{N/sp}.
\end{equation}
Then Lemma \ref{Lemma 2.11} will give an $R > 0$ such that \eqref{2.57} holds and complete the proof. We note that \eqref{2.58} is equivalent to
\begin{equation} \label{2.59}
\sup_{v \in C_\delta,\, t, \tau \ge 0}\, E_0(tv + \tau w_\delta) < \frac{s}{N}\, S_{N,\,p,\,s}^{N/sp}.
\end{equation}

In the absence of an explicit formula for a minimizer for $S_{N,\,p,\,s}$ in \eqref{1.21}, we will use certain asymptotic estimates for minimizers obtained in Brasco et al.\! \cite{MR3461371} to choose $w_\delta$. It was shown in \cite{MR3461371} that there exists a nonnegative, radially symmetric, and decreasing minimizer $U(x) = U(r),\, r = |x|$ satisfying
\[
\int_{\R^{2N}} \frac{|U(x) - U(y)|^p}{|x - y|^{N+sp}}\, dx dy = \int_{\R^N} U(x)^{p_s^\ast}\, dx = S_{N,\,p,\,s}^{N/sp}
\]
and
\begin{equation} \label{2.60}
c_{18}\, r^{-(N-sp)/(p-1)} \le U(r) \le c_{19}\, r^{-(N-sp)/(p-1)} \quad \forall r \ge 1
\end{equation}
for some constants $c_{18}, c_{19} > 0$. Then the functions
\[
u_\eps(x) = \eps^{-(N-sp)/p}\, U\bigg(\frac{|x|}{\eps}\bigg), \quad \eps > 0
\]
are also minimizers for $S_{N,\,p,\,s}$ satisfying
\[
\int_{\R^{2N}} \frac{|u_\eps(x) - u_\eps(y)|^p}{|x - y|^{N+sp}}\, dx dy = \int_{\R^N} u_\eps(x)^{p_s^\ast}\, dx = S_{N,\,p,\,s}^{N/sp}
\]
and
\[
\frac{U(\theta r)}{U(r)} \le \frac{c_{19}}{c_{18}}\, \theta^{-(N-sp)/(p-1)} \le \half \quad \forall r \ge 1
\]
if $\theta > 1$ is a sufficiently large constant. Let
\[
u_{\eps,\rho}(x) = \begin{cases}
u_\eps(x) & \text{if } |x| \le \rho\\[10pt]
\dfrac{u_\eps(\rho)\, (u_\eps(x) - u_\eps(\theta \rho))}{u_\eps(\rho) - u_\eps(\theta \rho)} & \text{if } \rho < |x| < \theta \rho\\[10pt]
0 & \text{if } |x| \ge \theta \rho
\end{cases}
\]
and
\[
w_{\eps,\rho}(x) = \frac{u_{\eps,\rho}(x)}{\left(\dint_{\R^N} u_{\eps,\rho}(x)^{p_s^\ast}\, dx\right)^{1/p_s^\ast}}
\]
for $0 < \rho \le \rho_0/2$. Then
\begin{equation} \label{2.61}
\int_{\R^N} w_{\eps,\rho}(x)^{p_s^\ast}\, dx = 1
\end{equation}
and for $\eps \le \rho/2$ we have the estimates
\begin{gather}
\label{2.62} \int_{\R^{2N}} \frac{|w_{\eps,\rho}(x) - w_{\eps,\rho}(y)|^p}{|x - y|^{N+sp}}\, dx dy \le S_{N,\,p,\,s} + c_{20}\, (\eps/\rho)^{(N-sp)/(p-1)},\\[10pt]
\label{2.63} \int_{\R^N} w_{\eps,\rho}^p(x)\, dx \ge \begin{cases}
c_{21}\, \eps^{sp} & \text{if } N > sp^2\\[10pt]
c_{21}\, \eps^{sp} \abs{\log\, (\eps/\rho)} & \text{if } N = sp^2
\end{cases}
\end{gather}
for some constants $c_{20}, c_{21} > 0$ (see Mosconi et al.\! \cite[Lemma 2.7]{MR3530213}). Let
\[
w_\delta = \pi_\M(w_{\eps,\rho/2 \theta}).
\]
Since functions in $C_\delta$ have their supports in $\Omega \setminus B_{3 \rho/4}(0)$, while the support of $w_\delta$ is in $\closure{B_{\rho/2}(0)}$, $w_\delta \in \M \setminus C_\delta$. We will show that \eqref{2.59} holds if $\eps, \rho > 0$ are sufficiently small.

Inequality \eqref{2.59} is equivalent to
\begin{equation} \label{2.64}
\sup_{v \in C_\delta,\, t, \tau \ge 0}\, E_0(tv + \tau w_{\eps,\rho/2 \theta}) < \frac{s}{N}\, S_{N,\,p,\,s}^{N/sp}.
\end{equation}

\begin{lemma} \label{Lemma 2.12}
For $v \in C_\delta$ and $t, \tau \ge 0$,
\[
E_0(tv + \tau w_{\eps,\rho/2 \theta}) \le \left[E_0(tv) + c_{22}\, \rho^{N-sp}\, t^p\right] + \left[E_0(\tau w_{\eps,\rho/2 \theta}) + c_{23}\, (\eps/\rho)^{(N-sp)/(p-1)}\, \tau^p\right]
\]
for some constants $c_{22}, c_{23} > 0$.
\end{lemma}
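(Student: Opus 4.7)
The plan is to exploit the disjoint supports of $v \in C_\delta$ and $w := w_{\eps,\rho/(2\theta)}$: since $\supp v \subset A := \Omega \setminus B_{3\rho/4}(0)$ and $\supp w \subset B := \overline{B_{\rho/2}(0)}$, with $\dist{A}{B} \ge \rho/4$, the local integrals $\int_\Omega |u|^p\, dx$ and $\int_\Omega |u|^{p_s^\ast}\, dx$ split pointwise on $u = tv + \tau w$. Hence the only non-additive contribution to $E_0(tv + \tau w) - E_0(tv) - E_0(\tau w)$ comes from the Gagliardo seminorm and equals $\frac{1}{p}\big([tv + \tau w]_{s,p}^p - t^p[v]_{s,p}^p - \tau^p[w]_{s,p}^p\big)$.

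First I would partition $\R^{2N}$ into nine pieces $X \times Y$, with $X, Y \in \{A, B, C\}$ and $C = \R^N \setminus (A \cup B)$, and substitute the relevant pointwise expressions for $v$ and $w$ on each piece. Seven of these pieces contribute identically to $[tv + \tau w]_{s,p}^p$ and to $t^p[v]_{s,p}^p + \tau^p[w]_{s,p}^p$; the two cross pieces are symmetric, yielding
\[
[tv + \tau w]_{s,p}^p - t^p[v]_{s,p}^p - \tau^p[w]_{s,p}^p = 2 \iint_{A \times B} \frac{|tv(x) - \tau w(y)|^p - t^p|v(x)|^p - \tau^p|w(y)|^p}{|x-y|^{N+sp}}\, dx\, dy.
\]
Applying the elementary inequality $|a - b|^p \le 2^{p-1}(|a|^p + |b|^p)$ bounds the integrand by $(2^{p-1} - 1)(t^p|v(x)|^p + \tau^p|w(y)|^p)$, so the remainder splits cleanly into a $t^p$-part and a $\tau^p$-part.

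For the $t^p$-part I would invoke the uniform $L^\infty(\Omega)$ bound on $C_\delta$ from Lemma \ref{Lemma 2.10} together with the estimate $|x - y| \ge \rho/4$ on $A \times B$ and $|B| \le c \rho^N$, to obtain $\iint_{A \times B} |v(x)|^p / |x-y|^{N+sp}\, dx\, dy \le c \rho^{N-sp}$, which produces the extra term $c_{22}\, \rho^{N-sp}\, t^p$. For the $\tau^p$-part I would swap the order of integration and use $\int_A dx/|x-y|^{N+sp} \le c\rho^{-sp}$ for $y \in B$, reducing the task to a sharp bound on $\rho^{-sp}\|w\|_p^p$.

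The main obstacle is the estimate $\rho^{-sp}\|w\|_p^p \le c\, (\eps/\rho)^{(N-sp)/(p-1)}$. To establish it I would split $\int_B |w(y)|^p\, dy$ into a ``core'' region $\{|y| \le \eps\}$ and a ``tail'' region $\{\eps < |y| \le \rho/2\}$, and apply the asymptotic decay $U(r) \le c_{19}\, r^{-(N-sp)/(p-1)}$ from \eqref{2.60} together with the rescaling $u_\eps(x) = \eps^{-(N-sp)/p}\, U(|x|/\eps)$ and the fact that $\|u_{\eps,\rho/(2\theta)}\|_{p_s^\ast}$ stays bounded below uniformly. Combining both parts with the prefactor $1/p$ delivers the lemma with constants $c_{22}, c_{23}$ depending only on $N, s, p$, the cutoff $\theta$, and the uniform $L^\infty$ bound on $C_\delta$.
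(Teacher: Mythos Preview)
Your approach has a genuine gap at the step you yourself flag as the ``main obstacle'': the estimate
\[
\rho^{-sp}\,\|w_{\eps,\rho/2\theta}\|_p^p \le c\,(\eps/\rho)^{(N-sp)/(p-1)}
\]
is false in the relevant regime $N \ge sp^2$. Indeed, the \emph{lower} bound in \eqref{2.63} gives $\|w_{\eps,\rho/2\theta}\|_p^p \ge c_{21}\,\eps^{sp}$ (with an extra $|\log(\eps/\rho)|$ when $N = sp^2$), so
\[
\rho^{-sp}\,\|w_{\eps,\rho/2\theta}\|_p^p \ge c_{21}\,(\eps/\rho)^{sp}.
\]
Since $sp \le (N-sp)/(p-1)$ precisely when $N \ge sp^2$, and $\eps/\rho \to 0$, the quantity $(\eps/\rho)^{sp}$ is \emph{larger} than $(\eps/\rho)^{(N-sp)/(p-1)}$, and no decay estimate for $U$ can reverse this. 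Thus the crude inequality $|a-b|^p \le 2^{p-1}(|a|^p + |b|^p)$ is too blunt: it forces a $\tau^p$-error of order $(\eps/\rho)^{sp}$, which is not the exponent stated in the lemma and, more importantly, would destroy the subsequent energy comparison in Lemma~\ref{Lemma 2.13.1} (the positive error $(\eps/\rho)^{sp}$ would dominate the negative term $-\lambda c_{35}\,\eps^{sp}$ once $\rho \to 0$).

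The paper avoids this by using the sharper elementary inequality
\[
|a+b|^p \le |a|^p + |b|^p + C_p\big(|a|^{p-1}|b| + |a|\,|b|^{p-1}\big) \quad (p \ge 2),
\]
respectively $|a+b|^p \le |a|^p + |b|^p + p\,|a|\,|b|^{p-1}$ for $1 < p < 2$. This produces mixed cross-integrals $J_q = \iint |v(x)|^{p-q}\,w(y)^q/|x-y|^{N+sp}\,dx\,dy$ with $q \in \{1,p-1\}$, i.e.\ with powers of $w$ strictly below $p$. Because $\int w^q$ has a genuinely better bound than $\int w^p$ via the decay \eqref{2.60}, one obtains
\[
J_q \le c\,\rho^{(N-sp)(p-1-q)/(p-1)}\,\eps^{(N-sp)q/[p(p-1)]},
\]
and then Young's inequality on $t^{p-q}\tau^q J_q$ recombines this into $c\,\rho^{N-sp}\,t^p + c\,(\eps/\rho)^{(N-sp)/(p-1)}\,\tau^p$, exactly the exponents claimed. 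The passage through lower powers of $w$ is the essential idea you are missing.
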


\begin{proof}
Since $v$ and $w_{\eps,\rho/2 \theta}$ have disjoint supports,
\begin{multline} \label{2.65}
E_0(tv + \tau w_{\eps,\rho/2 \theta}) = \frac{1}{p} \int_{\R^{2N}} \frac{|(tv(x) + \tau w_{\eps,\rho/2 \theta}(x)) - (tv(y) + \tau w_{\eps,\rho/2 \theta}(y))|^p}{|x - y|^{N+sp}}\, dx dy\\[10pt]
- \int_\Omega \left(\frac{\lambda t^p}{p}\, |v|^p + \frac{t^{p_s^\ast}}{p_s^\ast}\, |v|^{p_s^\ast}\right) dx - \int_\Omega \left(\frac{\lambda \tau^p}{p}\, w_{\eps,\rho/2 \theta}^p + \frac{\tau^{p_s^\ast}}{p_s^\ast}\, w_{\eps,\rho/2 \theta}^{p_s^\ast}\right) dx.
\end{multline}
Denote by $I_1$ the first integral on the right-hand side. Since $\supp v \subset B_{3 \rho/4}^c$ and $\supp w_{\eps,\rho/2 \theta} \subset \closure{B_{\rho/2}}$,
\begin{multline} \label{2.66}
I_1 \le t^p \int_{B_{\rho/2}^c \times B_{\rho/2}^c} \frac{|v(x) - v(y)|^p}{|x - y|^{N+sp}}\, dx dy + \tau^p \int_{B_{3 \rho/4} \times B_{3 \rho/4}} \frac{|w_{\eps,\rho/2 \theta}(x) - w_{\eps,\rho/2 \theta}(y)|^p}{|x - y|^{N+sp}}\, dx dy\\[10pt]
+ 2 \int_{B_{3 \rho/4}^c \times B_{\rho/2}} \frac{|tv(x) - \tau w_{\eps,\rho/2 \theta}(y)|^p}{|x - y|^{N+sp}}\, dx dy =: t^p I_2 + \tau^p I_3 + 2 I_4.
\end{multline}

First suppose $p \ge 2$. To estimate $I_4$, we use the elementary inequality
\[
|a + b|^p \le |a|^p + |b|^p + C_p\, (|a|^{p-1} |b| + |a||b|^{p-1}) \quad \forall a, b \in \R
\]
for some constant $C_p > 0$. Since $v(y) = 0$ for $y \in B_{\rho/2}$ and $w_{\eps,\rho/2 \theta}(x) = 0$ for $x \in B_{3 \rho/4}^c$, we get
\begin{multline} \label{2.67}
I_4 \le t^p \int_{B_{3 \rho/4}^c \times B_{\rho/2}} \frac{|v(x) - v(y)|^p}{|x - y|^{N+sp}}\, dx dy + \tau^p \int_{B_{3 \rho/4}^c \times B_{\rho/2}} \frac{|w_{\eps,\rho/2 \theta}(x) - w_{\eps,\rho/2 \theta}(y)|^p}{|x - y|^{N+sp}}\, dx dy\\[10pt]
\hspace{-22.61pt} + C_p\, \Bigg(t^{p-1} \tau \int_{B_{3 \rho/4}^c \times B_{\rho/2}} \frac{|v(x)|^{p-1}\, w_{\eps,\rho/2 \theta}(y)}{|x - y|^{N+sp}}\, dx dy + t \tau^{p-1} \int_{B_{3 \rho/4}^c \times B_{\rho/2}} \frac{|v(x)|\, w_{\eps,\rho/2 \theta}(y)^{p-1}}{|x - y|^{N+sp}}\, dx dy\Bigg)\\[10pt]
=: t^p I_5 + \tau^p I_6 + C_p \left(t^{p-1} \tau J_1 + t \tau^{p-1} J_{p-1}\right),
\end{multline}
where
\[
J_q = \int_{B_{3 \rho/4}^c \times B_{\rho/2}} \frac{|v(x)|^{p-q}\, w_{\eps,\rho/2 \theta}(y)^q}{|x - y|^{N+sp}}\, dx dy, \quad q = 1, p - 1.
\]
Since $C_\delta$ is bounded in $L^\infty(\Omega)$ and
\[
|x - y| \ge |x| - |y| > |x| - \frac{\rho}{2} \ge |x| - \frac{2}{3}\, |x| = \frac{|x|}{3} \quad \forall (x,y) \in B_{3 \rho/4}^c \times B_{\rho/2},
\]
we have
\begin{equation} \label{2.68}
J_q \le c_{24} \int_{B_{3 \rho/4}^c \times B_{\rho/2}} \frac{w_{\eps,\rho/2 \theta}(y)^q}{|x|^{N+sp}}\, dx dy = c_{25}\, \rho^{-sp} \int_{B_{\rho/2}} w_{\eps,\rho/2 \theta}(y)^q\, dy
\end{equation}
for some constants $c_{24}, c_{25} > 0$. By Mosconi et al.\! \cite[Lemma 2.7]{MR3530213}, $|u_{\eps,\rho/2 \theta}|_{p_s^\ast}$ is bounded away from zero and hence
\begin{equation} \label{2.69}
\int_{B_{\rho/2}} w_{\eps,\rho/2 \theta}(y)^q\, dy \le c_{26} \int_{B_{\rho/2}} u_{\eps,\rho/2 \theta}(y)^q\, dy
\end{equation}
for some constant $c_{26} > 0$. Noting that $u_{\eps,\rho/2 \theta} \le u_\eps$, we have
\begin{multline} \label{2.70}
\int_{B_{\rho/2}} u_{\eps,\rho/2 \theta}(y)^q\, dy \le \int_{B_{\rho/2}} u_\eps(y)^q\, dy = \eps^{-(N-sp)\,q/p} \int_{B_{\rho/2}} U\bigg(\frac{|y|}{\eps}\bigg)^q\, dy\\[10pt]
= \eps^{N-(N-sp)\,q/p} \int_{B_{\rho/2 \eps}} U(|y|)^q\, dy.
\end{multline}
When $q < N(p - 1)/(N - sp)$, \eqref{2.60} gives
\begin{equation} \label{2.71}
\int_{B_{\rho/2 \eps}} U(|y|)^q\, dy \le c_{27}\, (\rho/\eps)^{N-(N-sp)\,q/(p-1)}
\end{equation}
for some constant $c_{27} > 0$, in particular, \eqref{2.71} holds for $q = 1$ when $p > 2N/(N + s)$ and for $q = p - 1$. Combining \eqref{2.68}--\eqref{2.71} gives
\[
J_q \le c_{28}\, \rho^{(N-sp)(p-q-1)/(p-1)}\, \eps^{(N-sp)\,q/p\,(p-1)}
\]
for some constant $c_{28} > 0$, so
\begin{multline*}
t^{p-q}\, \tau^q J_q \le c_{28} \left(\rho^{N-sp}\, t^p\right)^{1-q/p} \left((\eps/\rho)^{(N-sp)/(p-1)}\, \tau^p\right)^{q/p} \le c_{29}\, \rho^{N-sp}\, t^p\\[10pt]
+ c_{30}\, (\eps/\rho)^{(N-sp)/(p-1)}\, \tau^p
\end{multline*}
for some constants $c_{29}, c_{30} > 0$ by Young's inequality. Combining this with \eqref{2.65}--\eqref{2.67}, and noting that
\[
I_2 + 2 I_5 = \int_{\R^{2N}} \frac{|v(x) - v(y)|^p}{|x - y|^{N+sp}}\, dx dy, \qquad I_3 + 2 I_6 = \int_{\R^{2N}} \frac{|w_{\eps,\rho/2 \theta}(x) - w_{\eps,\rho/2 \theta}(y)|^p}{|x - y|^{N+sp}}\, dx dy,
\]
we get the desired conclusion in this case.

If $1 < p < 2$, we use the elementary inequality
\[
|a + b|^p \le |a|^p + |b|^p + p\, |a||b|^{p-1} \quad \forall a, b \in \R
\]
to get
\[
I_4 \le t^p I_5 + \tau^p I_6 + p\, t \tau^{p-1} J_{p-1}
\]
and proceed as above.
\end{proof}

By Lemma \ref{Lemma 2.12},
\begin{multline} \label{2.72}
\sup_{v \in C_\delta,\, t, \tau \ge 0}\, E_0(tv + \tau w_{\eps,\rho/2 \theta}) \le \sup_{v \in C_\delta,\, t \ge 0} \left[E_0(tv) + c_{22}\, \rho^{N-sp}\, t^p\right] + \sup_{\tau \ge 0} \big[E_0(\tau w_{\eps,\rho/2 \theta})\\[10pt]
+ c_{23}\, (\eps/\rho)^{(N-sp)/(p-1)}\, \tau^p\big] =: K_1 + K_2.
\end{multline}

\begin{lemma} \label{Lemma 2.13}
We have
\[
K_1 \le \begin{cases}
0 & \text{if } (\lambda_k + c_{17}\, \rho^{N-sp})(1 + c_{22}\, \rho^{N-sp}) \le \lambda < \lambda_{k+1}\\[10pt]
c_{31}\, \rho^{N(N-sp)/sp} & \text{if } \lambda = \lambda_k,
\end{cases}
\]
where $c_{17}$ is as in Lemma \ref{Lemma 2.10} and $c_{31} > 0$ is a constant.
\end{lemma}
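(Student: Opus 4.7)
The proposal is to mimic the argument of Lemma~2.5, adapting it for the fractional setting and keeping track of the extra linear-in-$t^p$ term from Lemma~2.12. First I would rewrite $E_0(tv)$ in a form convenient for using the Finsler structure. Since $v \in C_\delta \subset \M$, we have $\int_{\R^{2N}} |v(x)-v(y)|^p/|x-y|^{N+sp}\, dxdy = p$, so
\[
E_0(tv) + c_{22}\, \rho^{N-sp}\, t^p = t^p\!\left(1 + c_{22}\, \rho^{N-sp} - \frac{\lambda}{\Psi(v)}\right) - \frac{t^{p_s^\ast}}{p_s^\ast}\, \pnorm[p_s^\ast]{v}^{p_s^\ast}.
\]
By Lemma~\ref{Lemma 2.10}, $C_\delta \subset \Psi^{\lambda_k + c_{17}\, \rho^{N-sp}}$, so $\lambda/\Psi(v) \ge \lambda/(\lambda_k + c_{17}\, \rho^{N-sp})$.

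In the first case, the hypothesis $(\lambda_k + c_{17}\, \rho^{N-sp})(1 + c_{22}\, \rho^{N-sp}) \le \lambda$ is precisely what I need: it rearranges to $1 + c_{22}\, \rho^{N-sp} \le \lambda/(\lambda_k + c_{17}\, \rho^{N-sp}) \le \lambda/\Psi(v)$, so the coefficient of $t^p$ is nonpositive. Hence the displayed expression is $\le 0$ for every $t \ge 0$ and every $v \in C_\delta$, giving $K_1 \le 0$.

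In the second case $\lambda = \lambda_k$, the same inequality gives $1 - \lambda_k/\Psi(v) \le c_{17}\, \rho^{N-sp}/\lambda_k$, so the coefficient of $t^p$ is bounded above by $c_{32}\, \rho^{N-sp}$ for some constant $c_{32} > 0$ (absorbing $c_{17}/\lambda_k + c_{22}$). I also need a uniform positive lower bound $\pnorm[p_s^\ast]{v}^{p_s^\ast} \ge c_{33}$ on $C_\delta$; this is obtained exactly as in the proof of Lemma~\ref{Lemma 2.11}, by combining $\pnorm[p]{v}^p \ge p/(\lambda_k + c_{17}\, \rho^{N-sp})$ with the H\"{o}lder inequality $\pnorm[p]{v} \le \vol{\Omega}^{1/p - 1/p_s^\ast}\, \pnorm[p_s^\ast]{v}$. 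Then
\[
E_0(tv) + c_{22}\, \rho^{N-sp}\, t^p \le c_{32}\, \rho^{N-sp}\, t^p - \frac{c_{33}}{p_s^\ast}\, t^{p_s^\ast},
\]
and maximizing the right-hand side over $t \ge 0$ yields a bound of the form $c_{31}\, \rho^{(N-sp)\, p_s^\ast/(p_s^\ast - p)}$. Since $p_s^\ast/(p_s^\ast - p) = (Np/(N-sp))/(sp^2/(N-sp)) = N/(sp)$, the exponent on $\rho$ works out to $N(N-sp)/sp$, matching the claimed bound.

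The steps are all routine once the first rewriting is done; I do not expect any genuine obstacle, since everything parallels Lemma~\ref{Lemma 2.5} with the index $p^\ast$ replaced by $p_s^\ast$ and $\rho^{N-p}$ by $\rho^{N-sp}$, and the only new ingredient is absorbing $c_{22}\, \rho^{N-sp}\, t^p$ into the coefficient of $t^p$ before invoking the $C_\delta \subset \Psi^{\lambda_k + c_{17}\, \rho^{N-sp}}$ inclusion. The mildest care is needed in checking that the algebraic condition $(\lambda_k + c_{17}\, \rho^{N-sp})(1 + c_{22}\, \rho^{N-sp}) \le \lambda$ is the sharp threshold for the $t^p$-coefficient to flip sign; this is exactly how the first case of the lemma is phrased, which confirms the bookkeeping.
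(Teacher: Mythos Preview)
Your proposal is correct and follows essentially the same argument as the paper: both rewrite $E_0(tv)+c_{22}\rho^{N-sp}t^p$ as a coefficient-of-$t^p$ term minus a $t^{p_s^\ast}$ term, bound the coefficient using $C_\delta\subset\Psi^{\lambda_k+c_{17}\rho^{N-sp}}$, and in the second case maximize $c_{32}\rho^{N-sp}t^p - c_{33}t^{p_s^\ast}$ to obtain the exponent $N(N-sp)/sp$. The paper's proof is notationally slightly different (naming the two terms $K_3$ and $K_4$) but identical in substance, including the lower bound on $\pnorm[p_s^\ast]{v}$ via Lemma~\ref{Lemma 2.11}.
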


\begin{proof}
For $v \in C_\delta$ and $t \ge 0$,
\begin{multline*}
E_0(tv) + c_{22}\, \rho^{N-sp}\, t^p = t^p \left(\frac{1}{p} \int_{\R^{2N}} \frac{|v(x) - v(y)|^p}{|x - y|^{N+sp}}\, dx dy - \frac{\lambda}{p} \int_\Omega |v|^p\, dx + c_{22}\, \rho^{N-sp}\right)\\[10pt]
- \frac{t^{p_s^\ast}}{p_s^\ast} \int_\Omega |v|^{p_s^\ast}\, dx =: K_3\, t^p - K_4\, t^{p_s^\ast},
\end{multline*}
and
\[
K_3 = 1 - \frac{\lambda}{\Psi(v)} + c_{22}\, \rho^{N-sp} \le 1 - \frac{\lambda}{\lambda_k + c_{17}\, \rho^{N-sp}} + c_{22}\, \rho^{N-sp}
\]
since $C_\delta \subset \Psi^{\lambda_k + c_{17}\, \rho^{N-sp}}$ by Lemma \ref{Lemma 2.10}. So $E_0(tv) + c_{22}\, \rho^{N-sp}\, t^p \le 0$ if $(\lambda_k + c_{17}\, \rho^{N-sp})(1 + c_{22}\, \rho^{N-sp}) \le \lambda < \lambda_{k+1}$. If $\lambda = \lambda_k$, then
\[
K_3 \le \frac{c_{17}\, \rho^{N-sp}}{\lambda_k + c_{17}\, \rho^{N-sp}} + c_{22}\, \rho^{N-sp} \le c_{32}\, \rho^{N-sp},
\]
where $c_{32} = c_{17}/\lambda_ k + c_{22} > 0$, and $K_4 \ge c_{33}$ for some constant $c_{33} > 0$ as in the proof of Lemma \ref{Lemma 2.11}, so
\[
E_0(tv) + c_{22}\, \rho^{N-sp}\, t^p \le c_{32}\, \rho^{N-sp}\, t^p - c_{33}\, t^{p_s^\ast}
\]
and maximizing the right-hand side over all $t \ge 0$ gives the desired conclusion.
\end{proof}

\begin{lemma} \label{Lemma 2.13.1}
We have
\[
K_2 \le \begin{cases}
\dfrac{s}{N} \left[S_{N,\,p,\,s} + c_{34}\, (\eps/\rho)^{(N-sp)/(p-1)} - \lambda c_{35}\, \eps^{sp}\right]^{N/sp} & \text{if } N > sp^2\\[10pt]
\dfrac{s}{N} \left[S_{N,\,p,\,s} + c_{34}\, (\eps/\rho)^{sp} - \lambda c_{35}\, \eps^{sp} \abs{\log\, (\eps/\rho)}\right]^{N/sp} & \text{if } N = sp^2
\end{cases}
\]
for some constants $c_{34}, c_{35} > 0$.
\end{lemma}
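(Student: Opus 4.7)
The plan is to mimic the proof of Lemma 2.5.1 (the local analogue). Using \eqref{2.61} the $L^{p_s^\ast}$-norm of $w_{\eps,\rho/2\theta}$ equals $1$, so
\[
E_0(\tau w_{\eps,\rho/2 \theta}) + c_{23}\, (\eps/\rho)^{(N-sp)/(p-1)}\, \tau^p = \frac{\tau^p}{p}\, \alpha_{\eps,\rho} - \frac{\tau^{p_s^\ast}}{p_s^\ast},
\]
where
\[
\alpha_{\eps,\rho} = \int_{\R^{2N}} \frac{|w_{\eps,\rho/2 \theta}(x) - w_{\eps,\rho/2 \theta}(y)|^p}{|x - y|^{N+sp}}\, dx dy - \lambda \int_\Omega w_{\eps,\rho/2 \theta}^p\, dx + p\, c_{23}\, (\eps/\rho)^{(N-sp)/(p-1)}.
\]

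The first step is the elementary one-dimensional maximization: for $\alpha > 0$,
\[
\sup_{\tau \ge 0}\, \left(\frac{\alpha}{p}\, \tau^p - \frac{1}{p_s^\ast}\, \tau^{p_s^\ast}\right) = \frac{s}{N}\, \alpha^{N/sp},
\]
which follows from $\frac{1}{p} - \frac{1}{p_s^\ast} = \frac{s}{N}$ and $p_s^\ast/(p_s^\ast - p) = N/sp$. This gives $K_2 \le \frac{s}{N}\, \alpha_{\eps,\rho}^{N/sp}$ whenever $\alpha_{\eps,\rho} \ge 0$; in the opposite case the bound is trivially $0$.

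The second step is to insert the asymptotic estimates \eqref{2.62} and \eqref{2.63} applied with $\rho$ replaced by $\rho/2\theta$. Since $\theta$ is a fixed constant, $(\eps/(\rho/2\theta))^{(N-sp)/(p-1)} = (2\theta)^{(N-sp)/(p-1)}\, (\eps/\rho)^{(N-sp)/(p-1)}$, which together with the $p c_{23}$ term can be absorbed into a single constant $c_{34} > 0$ in front of $(\eps/\rho)^{(N-sp)/(p-1)}$. The bound on $\int w^p$ becomes $c_{35}\, \eps^{sp}$ (with $c_{35} = c_{21}$) in the case $N > sp^2$, yielding the first displayed estimate.

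The main bookkeeping point is the critical case $N = sp^2$. There $(N - sp)/(p - 1) = sp$, so the perturbation term has exponent $(\eps/\rho)^{sp}$ as required. The logarithmic term reads $|\log(\eps/(\rho/2 \theta))| = |\log(\eps/\rho) - \log(1/2\theta)|$; for all sufficiently small $\eps/\rho$ this is bounded below by $\tfrac{1}{2}\, |\log(\eps/\rho)|$, allowing us to replace the argument of the logarithm by $\eps/\rho$ at the cost of a smaller constant $c_{35}$. This, together with the substitution of \eqref{2.62} and the absorbed constant $c_{34}$, gives the second displayed estimate. There is no real analytic obstacle here; the only point requiring a touch of care is the comparison of the two logarithms in the borderline case.
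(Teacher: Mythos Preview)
Your proof is correct and follows essentially the same route as the paper: write the expression as $\tfrac{\alpha_{\eps,\rho}}{p}\tau^p - \tfrac{1}{p_s^\ast}\tau^{p_s^\ast}$ via \eqref{2.61}, maximize in $\tau$ to obtain $\tfrac{s}{N}\alpha_{\eps,\rho}^{N/sp}$, and insert the estimates \eqref{2.62}--\eqref{2.63}. Your explicit tracking of the $2\theta$ factors and the logarithm comparison in the borderline case $N = sp^2$ is a bit more detailed than the paper's one-line ``follows from \eqref{2.62} and \eqref{2.63},'' but the argument is the same.
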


\begin{proof}
We have
\begin{multline*}
E_0(\tau w_{\eps,\rho/2 \theta}) + c_{23}\, (\eps/\rho)^{(N-sp)/(p-1)}\, \tau^p = \frac{\tau^p}{p}\, \bigg(\int_{\R^{2N}} \frac{|w_{\eps,\rho/2 \theta}(x) - w_{\eps,\rho/2 \theta}(y)|^p}{|x - y|^{N+sp}}\, dx dy\\[10pt]
- \lambda \int_\Omega w_{\eps,\rho/2 \theta}^p\, dx + p\, c_{23}\, (\eps/\rho)^{(N-sp)/(p-1)}\bigg) - \frac{\tau^{p_s^\ast}}{p_s^\ast}
\end{multline*}
by \eqref{2.61}, and maximizing the right-hand side over all $\tau \ge 0$ gives
\[
K_2 = \frac{s}{N}\, \bigg(\int_{\R^{2N}} \frac{|w_{\eps,\rho/2 \theta}(x) - w_{\eps,\rho/2 \theta}(y)|^p}{|x - y|^{N+sp}}\, dx dy - \lambda \int_\Omega w_{\eps,\rho/2 \theta}^p\, dx + p\, c_{23}\, (\eps/\rho)^{(N-sp)/(p-1)}\bigg)^{N/sp},
\]
so the desired conclusion follows from \eqref{2.62} and \eqref{2.63}.
\end{proof}

We can now complete the proof of Theorem \ref{Theorem 1.9}. First suppose $N \ge sp^2$ and $\lambda > \lambda_1$ is not an eigenvalue. Then $\lambda_k < \lambda < \lambda_{k+1}$ for some $k \in \N$. Let $\rho \in (0,\rho_0/2]$ be so small that $(\lambda_k + c_{17}\, \rho^{N-sp})(1 + c_{22}\, \rho^{N-sp}) \le \lambda$. Then \eqref{2.64} follows from \eqref{2.72}, Lemma \ref{Lemma 2.13}, and Lemma \ref{Lemma 2.13.1} for sufficiently small $\eps > 0$.

Now suppose $N\, (N - sp^2) > s^2 p^2$ and $\lambda \ge \lambda_1$. Then $\lambda_k \le \lambda < \lambda_{k+1}$ for some $k \in \N$. We have already considered the case where $N > sp^2$ and $\lambda_k < \lambda < \lambda_{k+1}$, so suppose $\lambda = \lambda_k$. Then
\[
\sup_{v \in C_\delta,\, t, \tau \ge 0}\, E_0(tv + \tau w_{\eps,\rho/2 \theta}) \le \frac{s}{N} \left[S_{N,\,p,\,s} + c_{34}\, (\eps/\rho)^{(N-sp)/(p-1)} - \lambda c_{35}\, \eps^{sp}\right]^{N/sp} + c_{31}\, \rho^{N(N-sp)/sp}
\]
by \eqref{2.72}, Lemma \ref{Lemma 2.13}, and Lemma \ref{Lemma 2.13.1}. Set $\rho = \eps^\alpha$, where $\alpha > 0$ is to be chosen. Then the right-hand side is less than or equal to
\[
\frac{s}{N}\, S_{N,\,p,\,s}^{N/sp} \left[1 + c_{36}\, \eps^{(1 - \alpha)(N-sp)/(p-1)} - c_{37}\, \eps^{sp}\right]^{N/sp} + c_{31}\, \eps^{\alpha N(N-sp)/sp}
\]
for some constants $c_{36}, c_{37} > 0$, so \eqref{2.64} will follow for sufficiently small $\eps > 0$ if $\alpha$ can be found so that
\[
(1 - \alpha)(N - sp)/(p - 1) > sp
\]
and
\[
\alpha N(N-sp)/sp > sp.
\]
This is possible if and only if
\[
s^2 p^2/N(N - sp) < (N - sp^2)/(N - sp),
\]
i.e.,
\[
N\, (N - sp^2) > s^2 p^2. \hquad \qedsymbol
\]

\subsection{Proof of Theorem \ref{Theorem 1.10}}

We prove Theorem \ref{Theorem 1.10} by applying Theorem \ref{Theorem 1.5} with $W = W^{1,\,p}_0(\Omega)$ and the operators $\Ap, \Bp, f, g \in C(W^{1,\,p}_0(\Omega),W^{-1,\,p'}(\Omega))$ and $h \in W^{-1,\,p'}(\Omega)$ given by
\begin{multline*}
\dualp{\Ap[u]}{v} = \int_\Omega |\nabla u|^{p-2}\, \nabla u \cdot \nabla v\, dx, \quad \dualp{\Bp[u]}{v} = \int_\Omega |u|^{p-2}\, uv\, dx,\\[10pt]
\dualp{f(u)}{v} = \int_\Omega |u|^{p^\ast - 2}\, uv\, dx, \quad \dualp{g(u)}{v} = - \int_\Omega |\nabla u|^{q-2}\, \nabla u \cdot \nabla v\, dx, \quad u, v \in W^{1,\,p}_0(\Omega)
\end{multline*}
and
\[
\dualp{h}{v} = \int_\Omega h(x)\, v\, dx, \quad v \in W^{1,\,p}_0(\Omega).
\]

\begin{lemma} \label{Lemma 3.17}
There exists $\kappa > 0$ such that the functional $E$ in \eqref{1.23} satisfies the {\em \PS{c}} condition for all
\begin{equation} \label{3.69}
c < \frac{1}{N}\, S_{N,\,p}^{N/p} - \kappa \pnorm[{p^\ast}']{h}^{{p^\ast}'}.
\end{equation}
\end{lemma}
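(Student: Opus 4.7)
The plan is to follow the template of Lemma \ref{Lemma 2.2}, adapting it to the $(p,q)$-setting. A key observation is that since $\mu > 0$ and the term $\frac{\mu}{q} \int_\Omega |\nabla u|^q\, dx$ appears with a positive sign in $E$, it assists compactness rather than obstructing it; this is why no $\mu$-dependent correction is needed in \eqref{3.69}, in contrast with \eqref{2.11}.

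Given a \PS{c} sequence $\seq{u_j} \subset W^{1,\,p}_0(\Omega)$, I would first establish boundedness in $W^{1,\,p}_0(\Omega)$ by forming $E(u_j) - r^{-1}\, \dualp{E'(u_j)}{u_j}$ for a fixed $r \in (p,p^\ast)$: the principal terms $\left(\tfrac{1}{p} - \tfrac{1}{r}\right) \norm{u_j}^p$, $\mu \left(\tfrac{1}{q} - \tfrac{1}{r}\right) \int_\Omega |\nabla u_j|^q\, dx$, and $\left(\tfrac{1}{r} - \tfrac{1}{p^\ast}\right) \pnorm[p^\ast]{u_j}^{p^\ast}$ all carry positive coefficients, while the $\lambda |u_j|^p$ and $h u_j$ contributions are absorbed via H\"older and Young. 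A renamed subsequence then satisfies $u_j \wto u$ in $W^{1,\,p}_0(\Omega)$, $u_j \to u$ in $L^t(\Omega)$ for all $t \in [1,p^\ast)$, and (by the Boccardo--Murat a.e.\! convergence of gradients for \PS{} sequences of this class) $\nabla u_j \to \nabla u$ a.e.\! in $\Omega$. Setting $\widetilde{u}_j = u_j - u$, the Br\'ezis--Lieb lemma then applies to each of $|\nabla u_j|^p$, $|\nabla u_j|^q$, and $|u_j|^{p^\ast}$.

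Combining $\dualp{E'(u_j)}{u_j} = \o(1)$ with $\dualp{E'(u)}{u} = 0$ and these splittings yields
\[
\norm{\widetilde{u}_j}^p + \mu \int_\Omega |\nabla \widetilde{u}_j|^q\, dx = \pnorm[p^\ast]{\widetilde{u}_j}^{p^\ast} + \o(1),
\]
so discarding the non-negative $\mu$-term and using the Sobolev inequality produces the Lemma \ref{Lemma 2.2} analogue $\norm{\widetilde{u}_j}^p \bigl(S_{N,\,p}^{N/(N-p)} - \norm{\widetilde{u}_j}^{p^2/(N-p)}\bigr) \le \o(1)$. To bound $E(u)$ from below, I would use $E(u) = E(u) - \tfrac{1}{p}\, \dualp{E'(u)}{u}$ to rewrite
\[
E(u) = \mu \left(\tfrac{1}{q} - \tfrac{1}{p}\right) \int_\Omega |\nabla u|^q\, dx + \tfrac{1}{N}\, \pnorm[p^\ast]{u}^{p^\ast} - \left(1 - \tfrac{1}{p}\right) \int_\Omega h(x)\, u\, dx;
\]
the first two terms are non-negative since $\mu > 0$ and $q < p$, while H\"older and Young absorb the last term into the $p^\ast$-term, giving $E(u) \ge - \kappa \pnorm[{p^\ast}']{h}^{{p^\ast}'}$. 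Splitting $E(u_j)$ via Br\'ezis--Lieb and substituting the identity above then gives
\[
c = \tfrac{1}{N}\, \norm{\widetilde{u}_j}^p + \mu \left(\tfrac{1}{q} - \tfrac{1}{p^\ast}\right) \int_\Omega |\nabla \widetilde{u}_j|^q\, dx + E(u) + \o(1),
\]
so $\norm{\widetilde{u}_j}^p \le N \bigl(c + \kappa \pnorm[{p^\ast}']{h}^{{p^\ast}'}\bigr) + \o(1)$, which under \eqref{3.69} stays strictly below $S_{N,\,p}^{N/p}$ eventually and forces $\widetilde{u}_j \to 0$ in $W^{1,\,p}_0(\Omega)$.

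No essential new obstacle arises: the argument is a direct translation of Lemma \ref{Lemma 2.2}, with the $q$-term simply being dropped as a non-negative quantity whenever convenient. The only point requiring care is the a.e.\! convergence of $\nabla u_j$ used to apply Br\'ezis--Lieb to both $|\nabla u_j|^p$ and $|\nabla u_j|^q$; this is standard for \PS{} sequences of $(p,q)$-Laplacian type.
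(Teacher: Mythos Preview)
Your proposal is correct and follows essentially the paper's own argument. The one minor difference is that the paper handles the $\mu\,\pnorm[q]{\nabla u_j}^q$ term via weak lower semicontinuity (its inequality $\liminf \pnorm[q]{\nabla u_j} \ge \pnorm[q]{\nabla u}$) rather than a Br\'ezis--Lieb splitting, so it never explicitly isolates $\int_\Omega |\nabla \widetilde{u}_j|^q\,dx$; since a.e.\ gradient convergence is in any case needed for the $p$-norm splitting and for passing to the limit in $\dualp{E'(u_j)}{u}$, your uniform use of Br\'ezis--Lieb for all three integrands is equally valid and arguably tidier.
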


\begin{proof}
Let $c \in \R$ and let $\seq{u_j}$ be a sequence in $W^{1,\,p}_0(\Omega)$ such that
\begin{equation} \label{3.70}
E(u_j) = \int_\Omega \left(\frac{1}{p}\, |\nabla u_j|^p + \frac{\mu}{q}\, |\nabla u_j|^q - \frac{\lambda}{p}\, |u_j|^p - \frac{1}{p^\ast}\, |u_j|^{p^\ast} - h(x)\, u_j\right) dx = c + \o(1)
\end{equation}
and
\begin{multline} \label{3.71}
\dualp{E'(u_j)}{v} = \int_\Omega \big(|\nabla u_j|^{p-2}\, \nabla u_j \cdot \nabla v + \mu\, |\nabla u_j|^{q-2}\, \nabla u_j \cdot \nabla v - \lambda\, |u_j|^{p-2}\, u_j\, v\\[10pt]
- |u_j|^{p^\ast - 2}\, u_j\, v - h(x)\, v\big)\, dx = \o(\norm{v}) \quad \forall v \in W^{1,\,p}_0(\Omega).
\end{multline}
Taking $v = u_j$ in \eqref{3.71} gives
\begin{equation} \label{3.72}
\int_\Omega \left(|\nabla u_j|^p + \mu\, |\nabla u_j|^q - \lambda\, |u_j|^p - |u_j|^{p^\ast} - h(x)\, u_j\right) dx = \o(\norm{u_j}).
\end{equation}
Let $r \in (p,p^\ast)$. Dividing \eqref{3.72} by $r$ and subtracting from \eqref{3.70} gives
\begin{multline*}
\int_\Omega \bigg[\left(\frac{1}{p} - \frac{1}{r}\right) |\nabla u_j|^p + \mu \left(\frac{1}{q} - \frac{1}{r}\right) |\nabla u_j|^q - \lambda \left(\frac{1}{p} - \frac{1}{r}\right) |u_j|^p + \left(\frac{1}{r} - \frac{1}{p^\ast}\right) |u_j|^{p^\ast}\\[10pt]
- \left(1 - \frac{1}{r}\right) h(x)\, u_j\bigg] dx = c + \o(1) + \o(\norm{u_j}),
\end{multline*}
and it follows from this that $\seq{u_j}$ is bounded. So a renamed subsequence converges to some $u$ weakly in $W^{1,\,p}_0(\Omega)$, strongly in $L^t(\Omega)$ for all $t \in [1,p^\ast)$, and a.e.\! in $\Omega$. Setting $\widetilde{u}_j = u_j - u$, we will show that $\widetilde{u}_j \to 0$ in $W^{1,\,p}_0(\Omega)$.

Equation \eqref{3.72} gives
\begin{equation} \label{3.73}
\norm{u_j}^p + \mu \pnorm[q]{\nabla u_j}^q = \pnorm[p^\ast]{u_j}^{p^\ast} + \int_\Omega \left(\lambda\, |u|^p + h(x)\, u\right) dx + \o(1).
\end{equation}
Taking $v = u$ in \eqref{3.71} and passing to the limit gives
\begin{equation} \label{3.74}
\norm{u}^p + \mu \pnorm[q]{\nabla u}^q = \pnorm[p^\ast]{u}^{p^\ast} + \int_\Omega \left(\lambda\, |u|^p + h(x)\, u\right) dx.
\end{equation}
Since
\begin{equation} \label{3.75}
\norm{\widetilde{u}_j}^p = \norm{u_j}^p - \norm{u}^p + \o(1)
\end{equation}
and
\[
\pnorm[p^\ast]{\widetilde{u}_j}^{p^\ast} = \pnorm[p^\ast]{u_j}^{p^\ast} - \pnorm[p^\ast]{u}^{p^\ast} + \o(1)
\]
by the Br{\'e}zis-Lieb lemma \cite[Theorem 1]{MR699419}, and
\begin{equation} \label{3.77}
\liminf \pnorm[q]{\nabla u_j} \ge \pnorm[q]{\nabla u},
\end{equation}
\eqref{3.73} and \eqref{3.74} imply
\[
\norm{\widetilde{u}_j}^p \le \pnorm[p^\ast]{\widetilde{u}_j}^{p^\ast} + \o(1) \le \frac{\norm{\widetilde{u}_j}^{p^\ast}}{S_{N,\,p}^{p^\ast/p}} + \o(1),
\]
so
\begin{equation} \label{3.76}
\norm{\widetilde{u}_j}^p \left(S_{N,\,p}^{N/(N-p)} - \norm{\widetilde{u}_j}^{p^2/(N-p)}\right) \le \o(1).
\end{equation}
On the other hand, \eqref{3.70} gives
\[
c = \frac{1}{p} \norm{u_j}^p + \frac{\mu}{q} \pnorm[q]{\nabla u_j}^q - \frac{1}{p^\ast} \pnorm[p^\ast]{u_j}^{p^\ast} - \int_\Omega \left(\frac{\lambda}{p}\, |u|^p + h(x)\, u\right) dx + \o(1),
\]
and a straightforward calculation combining this with \eqref{3.73}--\eqref{3.75} gives
\[
c = \frac{1}{N} \norm{\widetilde{u}_j}^p + \mu \left[\left(\frac{1}{q} - \frac{1}{p^\ast}\right) \pnorm[q]{\nabla u_j}^q - \frac{1}{N} \pnorm[q]{\nabla u}^q\right] + \int_\Omega \left[\frac{1}{N}\, |u|^{p^\ast} - \left(1 - \frac{1}{p}\right) h(x)\, u\right] dx + \o(1).
\]
The second term on the right-hand side is greater than or equal to $\o(1)$ by \eqref{3.77} and the integral is greater than or equal to
\[
\frac{1}{N} \pnorm[p^\ast]{u}^{p^\ast} - \left(1 - \frac{1}{p}\right) \pnorm[{p^\ast}']{h} \pnorm[p^\ast]{u} \ge - \kappa \pnorm[{p^\ast}']{h}^{{p^\ast}'}
\]
for some $\kappa > 0$ by the H\"{o}lder and Young's inequalities, so
\[
\norm{\widetilde{u}_j}^p \le N \left(c + \kappa \pnorm[{p^\ast}']{h}^{{p^\ast}'}\right) + \o(1).
\]
Combining this with \eqref{3.76} shows that $\widetilde{u}_j \to 0$ when \eqref{3.69} holds.
\end{proof}

We apply Theorem \ref{Theorem 1.5} with
\[
c_{\mu,\,h}^\ast = \frac{1}{N}\, S_{N,\,p}^{N/p} - \kappa \pnorm[{p^\ast}']{h}^{{p^\ast}'},
\]
where $\kappa > 0$ is as in Lemma \ref{Lemma 3.17}, noting that
\[
\lim_{\mu,\, \pnorm[{p^\ast}']{h} \to 0}\, c_{\mu,\,h}^\ast = \frac{1}{N}\, S_{N,\,p}^{N/p}.
\]
The set $\M$ and the functions $\Psi$, $\pi_\M$, and $E_0$ are the same as in the proof of Theorem \ref{Theorem 1.6}. Let $\lambda_k \le \lambda < \lambda_{k+1}$. Exactly as in that proof, there exist $R > 0$ and, for all sufficiently small $\delta > 0$, a compact symmetric subset $C_\delta$ of $\Psi^{\lambda + \delta}$ with $i(C_\delta) = k$ and $w_\delta \in \M \setminus C_\delta$ such that, setting $A_\delta = \set{\pi_\M((1 - \tau)\, v + \tau w_\delta) : v \in C_\delta,\, 0 \le \tau \le 1}$, we have
\[
\sup_{u \in A_\delta}\, E_0(Ru) \le 0, \qquad \sup_{u \in A_\delta,\, 0 \le t \le R}\, E_0(tu) < \frac{1}{N}\, S_{N,\,p}^{N/p}. \hquad \qedsymbol
\]

\def\cdprime{$''$}


\begin{thebibliography}{10}

\bibitem{MR1741848}
Gianni Arioli and Filippo Gazzola.
\newblock Some results on {$p$}-{L}aplace equations with a critical growth
  term.
\newblock {\em Differential Integral Equations}, 11(2):311--326, 1998.

\bibitem{MR4122508}
Mousomi Bhakta, Souptik Chakraborty, and Patrizia Pucci.
\newblock Nonhomogeneous systems involving critical or subcritical
  nonlinearities.
\newblock {\em Differential Integral Equations}, 33(7-8):323--336, 2020.

\bibitem{MR3461371}
Lorenzo Brasco, Sunra Mosconi, and Marco Squassina.
\newblock Optimal decay of extremals for the fractional {S}obolev inequality.
\newblock {\em Calc. Var. Partial Differential Equations}, 55(2):Art. 23, 32,
  2016.

\bibitem{MR699419}
Ha{\"{\i}}m Br{\'e}zis and Elliott Lieb.
\newblock A relation between pointwise convergence of functions and convergence
  of functionals.
\newblock {\em Proc. Amer. Math. Soc.}, 88(3):486--490, 1983.

\bibitem{MR1408672}
Dao-Min Cao and Huan-Song Zhou.
\newblock On the existence of multiple solutions of nonhomogeneous elliptic
  equations involving critical {S}obolev exponents.
\newblock {\em Z. Angew. Math. Phys.}, 47(1):89--96, 1996.

\bibitem{MR1966255}
M\'{o}nica Clapp, Otared Kavian, and Bernhard Ruf.
\newblock Multiple solutions of nonhomogeneous elliptic equations with critical
  nonlinearity on symmetric domains.
\newblock {\em Commun. Contemp. Math.}, 5(2):147--169, 2003.

\bibitem{MR1975059}
M\'{o}nica Clapp and Marco Squassina.
\newblock Nonhomogeneous polyharmonic elliptic problems at critical growth with
  symmetric data.
\newblock {\em Commun. Pure Appl. Anal.}, 2(2):171--186, 2003.

\bibitem{MR2514055}
Marco Degiovanni and Sergio Lancelotti.
\newblock Linking solutions for {$p$}-{L}aplace equations with nonlinearity at
  critical growth.
\newblock {\em J. Funct. Anal.}, 256(11):3643--3659, 2009.

\bibitem{MR956567}
Henrik Egnell.
\newblock Existence and nonexistence results for {$m$}-{L}aplace equations
  involving critical {S}obolev exponents.
\newblock {\em Arch. Rational Mech. Anal.}, 104(1):57--77, 1988.

\bibitem{MR0478189}
Edward~R. Fadell and Paul~H. Rabinowitz.
\newblock Generalized cohomological index theories for {L}ie group actions with
  an application to bifurcation questions for {H}amiltonian systems.
\newblock {\em Invent. Math.}, 45(2):139--174, 1978.

\bibitem{MR3250758}
Luiz F.~O. Faria, Ol\'{\i}mpio~H. Miyagaki, and F\'{a}bio~R. Pereira.
\newblock A nonhomogeneous quasilinear elliptic problem involving critical
  growth and {H}ardy potentials.
\newblock {\em Differential Integral Equations}, 27(11-12):1171--1186, 2014.

\bibitem{MR912211}
J.~P. Garc{\'{\i}}a~Azorero and I.~Peral~Alonso.
\newblock Existence and nonuniqueness for the {$p$}-{L}aplacian: nonlinear
  eigenvalues.
\newblock {\em Comm. Partial Differential Equations}, 12(12):1389--1430, 1987.

\bibitem{MR1695021}
N.~Ghoussoub and C.~Yuan.
\newblock Multiple solutions for quasi-linear {PDE}s involving the critical
  {S}obolev and {H}ardy exponents.
\newblock {\em Trans. Amer. Math. Soc.}, 352(12):5703--5743, 2000.

\bibitem{MR1009077}
Mohammed Guedda and Laurent V{\'e}ron.
\newblock Quasilinear elliptic equations involving critical {S}obolev
  exponents.
\newblock {\em Nonlinear Anal.}, 13(8):879--902, 1989.

\bibitem{MR2231068}
N.~Hirano, Anna~Maria Micheletti, and Angela Pistoia.
\newblock Multiple existence of solutions for a nonhomogeneous elliptic problem
  with critical exponent on {$\Bbb R^N$}.
\newblock {\em Nonlinear Anal.}, 65(3):501--513, 2006.

\bibitem{MR2488689}
Jo{\~a}o Marcos~do {\'O}, Everaldo Medeiros, and Uberlandio Severo.
\newblock On a quasilinear nonhomogeneous elliptic equation with critical
  growth in {$\Bbb R^N$}.
\newblock {\em J. Differential Equations}, 246(4):1363--1386, 2009.

\bibitem{MR3530213}
Sunra Mosconi, Kanishka Perera, Marco Squassina, and Yang Yang.
\newblock The {B}rezis-{N}irenberg problem for the fractional
  {$p$}-{L}aplacian.
\newblock {\em Calc. Var. Partial Differential Equations}, 55(4):Art. 105, 25,
  2016.

\bibitem{MR1998432}
Kanishka Perera.
\newblock Nontrivial critical groups in {$p$}-{L}aplacian problems via the
  {Y}ang index.
\newblock {\em Topol. Methods Nonlinear Anal.}, 21(2):301--309, 2003.

\bibitem{MR4293883}
Kanishka Perera.
\newblock An abstract critical point theorem with applications to elliptic
  problems with combined nonlinearities.
\newblock {\em Calc. Var. Partial Differential Equations}, 60(5):Paper No. 181,
  23, 2021.

\bibitem{MR2640827}
Kanishka Perera, Ravi~P. Agarwal, and Donal O'Regan.
\newblock {\em Morse theoretic aspects of {$p$}-{L}aplacian type operators},
  volume 161 of {\em Mathematical Surveys and Monographs}.
\newblock American Mathematical Society, Providence, RI, 2010.

\bibitem{MR3458311}
Kanishka Perera, Marco Squassina, and Yang Yang.
\newblock Bifurcation and multiplicity results for critical fractional
  {$p$}-{L}aplacian problems.
\newblock {\em Math. Nachr.}, 289(2-3):332--342, 2016.

\bibitem{MR3810517}
Kanishka Perera and Wenming Zou.
\newblock {$p$}-{L}aplacian problems involving critical {H}ardy-{S}obolev
  exponents.
\newblock {\em NoDEA Nonlinear Differential Equations Appl.}, 25(3):Art. 25,
  16, 2018.

\bibitem{MR3649469}
Zifei Shen, Fashun Gao, and Minbo Yang.
\newblock Multiple solutions for nonhomogeneous {C}hoquard equation involving
  {H}ardy-{L}ittlewood-{S}obolev critical exponent.
\newblock {\em Z. Angew. Math. Phys.}, 68(3):Paper No. 61, 25, 2017.

\bibitem{MR1168304}
G.~Tarantello.
\newblock On nonhomogeneous elliptic equations involving critical {S}obolev
  exponent.
\newblock {\em Ann. Inst. H. Poincar\'{e} C Anal. Non Lin\'{e}aire},
  9(3):281--304, 1992.

\end{thebibliography}
\end{document}